%% file: BoundednessInStrictDQ.tex
\documentclass[12pt,a4paper,final]{article}
\usepackage{geometry}
\usepackage[english, strings]{babel}
\usepackage{nchairx}
\usepackage[utf8]{inputenc}
\usepackage[T1]{fontenc}       
\usepackage{longtable}         
\usepackage{exscale}           
\usepackage[sort]{cite}        
\usepackage{xspace}            
\usepackage{tikz}              
\usetikzlibrary{cd, decorations.pathmorphing}
\usepackage{ifdraft}           
\usepackage[expansion=false    
]{microtype}        

\usepackage[dvipsnames]{xcolor} 
\usepackage[backref=page,      
final=true,         
pdfpagelabels,       
ocgcolorlinks, 
citecolor = ForestGreen,
]{hyperref}         
\usepackage{ocgx2} 

\usepackage{parskip}
\input{Auxiliary/Macros.tex} 

\author{Michael Heins}
\title{ \vspace{-1cm}
    \sc Boundedness in Strict \\ Deformation Quantization}
\date{}

\addto\captionsenglish{
    \renewcommand{\contentsname}%
    {Table of Contents}%
}

\author{\AuthorOne \\ \AuthorAddressOne}

\begin{document}

\maketitle

\vspace{-0.5cm}

\begin{abstract}
    This paper provides a complex-analytic interpretation of certain
    topologies on the tensor algebra, which are a central tool within
    strict deformation quantization. We prove that the symmetric tensor
    algebra $\Sym(V)$ of a barelled nuclear DF-space $V$ endowed with
    any of these topologies may be understood as an algebra of bounded
    Fréchet holomorphic functions on the strong dual space $V'_\beta$.
    The coarsest of the topologies then corresponds to the topology of
    uniform convergence on bounded subsets. The finer ones provide an
    infinite-dimensional generalization of the order of an entire
    holomorphic function. To facilitate these results, we study the
    interplay between boundedness and continuity of holomorphic mappings
    between locally convex spaces. As a byproduct, we establish a simple
    sufficient criterion for Fréchet holomorphy as well as several
    completeness results for spaces of bounded functions.
\end{abstract}

\tableofcontents

\renewcommand{\thefootnote}{\fnsymbol{footnote}}
\footnotetext{\textbf{MSC classication:} 46G20, 46G25, 53D55}
\footnotetext{\textbf{Key words and phrases:} Bounded functions, Fréchet
holomorphy, Strict deformation quantization}
\renewcommand{\thefootnote}{\arabic{footnote}}

\newpage

\section{Introduction}
\label{sec:Introduction}%
\input{TeX/Intro.tex}

\section*{Acknowledgements}
The author would like to thank Stefan Waldmann for numerous fruitful
discussions on the topic, and for pointing out the class of examples
discussed in Example~\ref{ex:NuclearFrechet}. Moreover, the author is
indebted to Matthias Sch\"otz for proposing the setting of
Example~\ref{ex:HolomorphicUnbounded} as a toy-model for the space of
compactly supported smooth functions. Last but not least, the great
patience afforded by Johanna Fladung and Bas Janssens has resulted in a
vastly improved introduction. Exploring the interplay between
complex analysis and strict deformation quantization was at the heart of
the author's thesis \cite{heins:2024a}. This article constitutes a
refinement of the second chapter thereof and is partly funded by the
project \emph{Regularity in the representation theory and deformation
quantization of loop groups}, which is financed by the Dutch Research
Council (NWO) under the grant
\href{https://doi.org/10.61686/NZDJB53943}{doi.org/10.61686/NZDJB53943}.

\newpage

\section{Spaces of Bounded Functions}
\label{sec:SpacesOfBoundedFunctions}%

This section concerns the locally convex analysis of bounded holomorphic
functions. Most of what we present seems to be well-known, if a bit
fallen out of favour. Likewise, many of our results may be alternatively
derived from Mackey's theory of vector bornologies \cite{mackey:1946a},
a comprehensive exposition of which is the monograph
\cite{hogbe-nlend:1977a}. Our presentation is more analytic in nature
and our techniques are tailored for the problems we are going to face
within Section~\ref{sec:TensorsAsPolynomials}. As a byproduct, we derive
a simple sufficient criterion for Fréchet holomorphy on Baire spaces in
Proposition~\ref{prop:FrechetBaire}, which might be of independent
interest.

\subsection{Bounded Mappings}
\label{subsec:FunctionsBounded}%
\input{TeX/Bounded.tex}

\subsection{Polynomials}
\label{subsec:FunctionsPolynomials}%
\input{TeX/Polynomials}

\subsection{Fréchet Holomorphic Functions}
\label{subsec:FunctionsFrechet}%
\input{TeX/Frechet}

\section{Projective and Injective $R$-topologies}
\label{sec:TensorAlgebras}%

In this section, we briefly review projective and injective tensor
products as well as the topologies they induce on the tensor
algebra. The projective variant was first used
in~\cite{beiser.waldmann:2014a} and subsequently
systematically studied and utilized for further strict deformations
within~\cite{waldmann:2014a}. The injective variant is new albeit
analogous, and its relevance from the point of view of holomorphicity
will be made clear within Theorem~\ref{thm:IotaEmbedding1}. In both
cases, the principal idea is that we want to endow the tensor algebra
\begin{equation}
    \Tensor^\bullet(V)
    \coloneqq
    \bigoplus_{n=0}^\infty
    \Tensor^n(V)
\end{equation}
of some locally convex space $V$ with the structure of a locally convex
algebra. Taking a step back, this first of all amounts to a choice of a
topology on the tensor powers
\begin{equation}
    \Tensor^n(V)
    \coloneqq
    \underbrace{V \tensor \cdots \tensor V}_{\textrm{$n$-times}}
\end{equation}
for all $n \in \N_0$. This is a surprisingly subtle enterprise, as one
can make numerous, typically inequivalent choices, a problem famously
explored by Grothendieck in his doctoral thesis, the results of which he
published as \cite{grothendieck:1955a}.

\subsection{Tensor Products of Locally Convex Spaces}
\label{subsec:TensorProducts}%
\input{TeX/TensorProducts}

\subsection{$R$-Topologies}
\label{subsec:RTopologies}%
\input{TeX/RTopologies}

\section{Tensors as Polynomials}
\label{sec:TensorsAsPolynomials}%
\input{TeX/TensorsAsPolynomials.tex}

\bibliographystyle{nchairx}
\phantomsection
\addcontentsline{toc}{section}{Bibliography}
\bibliography{Auxiliary/Boundedness,dqbook,dqarticle,preprints}

\end{document}

%% file: Auxiliary/Macros.tex
\newcommand{\AuthorOne}{\textbf{Michael Heins}}

\newcommand{\AuthorAddressOne}{
    \begin{minipage}{10cm}
        \centering\scriptsize
        Institute of Applied Mathematics \\
        Delft University of Technology \\
        Mekelweg 4 \\
        2628 CD Delft \\
        Netherlands \\
        \texttt{michael[dot]heins[dot]mathematics[at]gmail[dot]com}
    \end{minipage}
    \\
}

\makeatletter

\newcommand{\bibnote}[2]{\nocite{#1}\@namedef{#1chairxnote}{#2}}
\makeatother

\newcommand{\C}{\field{C}}

\newcommand{\R}{\field{R}}

\newcommand{\N}{\field{N}}

\newcommand{\Z}{\field{Z}}

\renewcommand{\epsilon}{\varepsilon}

\DeclareFontFamily{U}{mathx}{}
\DeclareFontShape{U}{mathx}{m}{n}{ <-> mathx10 }{}
\DeclareSymbolFont{mathx}{U}{mathx}{m}{n}
\DeclareFontSubstitution{U}{mathx}{m}{n}
\DeclareMathAccent{\widecheck}{0}{mathx}{"71}

\renewcommand{\Holomorphic}{\mathscr{H}}

\newcommand{\symmetric}{{\operatorname{s}}}

\newcommand{\cs}{\operatorname{cs}}

\newcommand{\polar}{\textrm{\tiny \fontencoding{U}\fontfamily{ding}\selectfont\symbol{'136}}}

\newcommand{\Bdd}{\mathfrak{B}}

\DeclareFontFamily{U}{mathx}{}
\DeclareFontShape{U}{mathx}{m}{n}{ <-> mathx10 }{}
\DeclareSymbolFont{mathx}{U}{mathx}{m}{n}
\DeclareFontSubstitution{U}{mathx}{m}{n}
\DeclareMathAccent{\widecheck}{0}{mathx}{"71}

%% file: TeX/Intro.tex

It is undeniable that the pursuit and study of formal
star products, as they were originally conceived in
\cite{bayen.et.al:1977a, bayen.et.al:1978a} has led to a wealth of
mathematical insights and applications. However, from the vantage point
of the original motivation, namely quantization within mathematical
physics, the theory harbours a serious flaw: the formal parameter
$\hbar$ carries the interpretation of Planck's constant. As such it has a
definitive size after choosing a system of units. Thus, given a formal
deformation $(\algebra{A}\formal{\hbar},\star)$ of a Poisson algebra
$(\algebra{A},\cdot,\{\argument,\argument\})$, one is invariably led to
study the convergence of the $\algebra{A}$-valued formal power series
\begin{equation}
    \star
    \colon
    \algebra{A}\formal{\hbar}
    \times
    \algebra{A}\formal{\hbar}
    \longrightarrow
    \algebra{A}\formal{\hbar}
\end{equation}
given by
\begin{equation}
    \label{eq:FormalDeformation}
    a \star b
    =
    a \cdot b
    +
    \sum_{n=1}^{\infty}
    D_n(a,b) \cdot \hbar^n,
\end{equation}
where the $D_n$ are $\C\formal{\hbar}$-linear extensions of bilinear
mappings defined on $\algebra{A}$. It is the central objective of strict
deformation quantization to address this issue.

In recent years, there have been at least two fruitful schools of
thought regarding this convergence problem. One approach, which was
initiated by \cite{rieffel:1989a, rieffel:1993a} in a
{C$^*$-algebraic} framework, is to construct integral formulas. More
recent
advances in this direction such as \cite{bieliavsky.massar:2001b,
bieliavsky:2002a, bieliavsky.bonneau.maeda:2007a, landsman:1998a} have
moreover been based on careful use of oscillatory Riemann integrals.
Such theories are inherently global in nature and governed by
sufficiently rapid decay of the observables, allowing for the
construction of C$^*$-norms. This naturally leads to mathematically
interesting spaces of functions and symbols generalizing the Schwartz
space. Under favourable circumstances, one may then recover the formal
star product~\eqref{eq:FormalDeformation} as an asymptotic
expansion of the oscillatory integrals at $\hbar = 0$. That is to say,
while the series itself is not convergent, its partial sums nevertheless
approximate the correct objects. One notable example is the
quantization of K\"{a}hler manifolds \cite{cahen.gutt.rawnsley:1990a,
    cahen.gutt.rawnsley:1993a, cahen.gutt.rawnsley:1994a,
    cahen.gutt.rawnsley:1995a} by means of Berezin-Toeplitz operators
\cite{berezin:1975a, berezin:1975b, berezin:1975c}, which ultimately
could even be refined to a continuous field of C$^*$-algebras within
\cite{bordemann.meinrenken.schlichenmaier:1994a}. While these techniques
are certainly powerful, there is in general no universal way to
produce star products by means of oscillatory integrals.

Another, and in many regards orthogonal, approach introduced
by \cite{waldmann:2014a} and motivated by the earlier work
\cite{omori.maeda.miyazaki.yoshiaki:1999a} consists in endowing
$\algebra{A}$ with a locally convex topology admitting for the
convergence of the series~\eqref{eq:FormalDeformation}. In practice, it
has proven fruitful to take a step back and even establish the
continuity of the star product as a bilinear mapping. This often comes
with the additional benefit of obtaining a holomorphic dependence
on~$\hbar$ as a byproduct of the required estimates. Thus, the
resulting theory is somewhat local and very much complex-analytic in
nature. The precise strategy is then as follows:

The data consists in a formal deformation $\star$ of a Poisson algebra
$\algebra{A}$. As a first step, one then looks for a Poisson subalgebra
$\algebra{A}_{\conv} \subseteq \algebra{A}$, on which the series
\eqref{eq:FormalDeformation} converges for obvious reasons, say by
terminating after finitely many terms. Such subalgebras tend to be too
small to be interesting from the point of view of both analysis and
physics. For instance, in many examples, they fail to contain the
relevant Hamiltonian functions. To remedy this, one then endows
$\algebra{A}_{\conv}$ with a topology such that $\star$ is continuous as
a bilinear mapping. This then has the pleasant consequence that star
product uniquely extends to a multiplication law on the completion
$\widehat{\algebra{A}}_{\conv}$, which now hopefully contains a wealth
of interesting observables. This programme is our principal motivation.
Although there is no general theory as of yet, there is a growing list
of examples
\cite{heins:2024a, heins.roth.waldmann:2023a,
    heins.moucha.roth.sugawa:2025a,
    kraus.roth.schleissinger.waldmann:2023a:pre2,
    esposito.schmitt.waldmann:2019a, kraus.roth.schoetz.waldmann:2019a,
    schoetz.waldmann:2018a, esposito.stapor.waldmann:2017a,
    beiser.waldmann:2014a, schmitt.schoetz:2022a,
    barmeier.schmitt:2022a, schmitt:2021a, pirkovskii:2025a,
    esposito.heins.waldmann:2026a:pre}, and
there are quite a few recurring themes throughout. The survey
\cite{waldmann:2019a} provides a more systematic overview.

Of course, one wants to retain as much algebraic structure as possible
in the analytic regime. For instance, if $\algebra{A}$ consists of
functions, then a natural demand is the continuity of the point
evaluations. In this case, the abstract completion
$\widehat{\algebra{A}}_{\conv}$ may itself be viewed as an algebra of
functions. The prototypical example consists in $\algebra{A}
\coloneqq \Cinfty(\R^{2d})$ with the standard symplectic Poisson bracket
\begin{equation}
    \label{eq:StandardPoisson}
    \{f,g\}
    \coloneqq
    \sum_{k=1}^{d}
    \frac{\partial f}{\partial q^k}
    \cdot
    \frac{\partial g}{\partial p_k}
    -
    \frac{\partial g}{\partial q^k}
    \cdot
    \frac{\partial f}{\partial p_k}
\end{equation}
and the standard-ordered star product
\begin{equation}
    \label{eq:StandardOrder}
    f \star_\std g
    \coloneqq
    f \cdot g
    +
    \sum_{n=1}^{\infty}
    \frac{\hbar^n}{n!}
    \sum_{k_1,\ldots,k_n=1}^{d}
    \frac{\partial^n f}{\partial p_{k_1} \cdots \partial p_{k_n}}
    \cdot
    \frac{\partial^n g}{\partial q^{k_1} \cdots \partial q^{k_n}},
\end{equation}
where $(q^1, \ldots, q^d, p_1,\ldots,p_d) \subseteq \R^{2d}$ is a vector
space basis. By virtue of the classical Borel Lemma~\cite{borel:1895a},
pointwise convergence of \eqref{eq:StandardOrder} for generic $f,g \in
\Cinfty(\R^{2d})$ is hopeless if $\hbar \neq 0$. That being said, the
series terminates on the polynomial algebra
\begin{equation}
    \label{eq:PolynomialsFiniteDimensional}
    \algebra{A}_{\conv}
    \coloneqq
    \C
    [q^1,\ldots,q^d,
    p_1,\ldots,p_d],
\end{equation}
viewed as a subalgebra of $\Cinfty(\R^{2d})$ by passing to polynomial
functions. Treating $q$ and $p$-directions independently, we view
$\C[q^1,\ldots,q^d] \subseteq \Holomorphic(\C^d)$ as entire holomorphic
functions. Recall that $f \in \Holomorphic(\C^d)$ is said to be of
finite order at most $\varrho > 0$ and of minimal type if
\begin{equation}
    \label{eq:SeminormsFiniteOrder}
    \norm{f}_\epsilon^{(\varrho)}
    \coloneqq
    \sup_{z \in \C^d}
    \exp
    \bigl(
        - \epsilon \norm{z}^{\varrho}
    \bigr)
    \cdot
    \abs[\big]
    {f(z)}
    <
    \infty
\end{equation}
for all $\epsilon > 0$. The expressions
$\norm{\argument}_{\epsilon}^{(\varrho)}$
constitute (inequivalent) norms and thus define a locally convex
topology on
\begin{equation}
    \Holomorphic_\varrho(\C^d)
    \coloneqq
    \bigl\{
        f \in \Holomorphic_\varrho(\C^d)
        \colon
        \norm{f}_\epsilon^{(\varrho)}
        <
        \infty
        \textrm{ for all }
        \epsilon > 0
    \bigr\}.
\end{equation}
In \cite{omori.maeda.miyazaki.yoshiaki:1999a} it was shown that endowing
\eqref{eq:PolynomialsFiniteDimensional} with the subspace topology
inherited from
\begin{equation}
    \Holomorphic_{2}(\C^d)
    \times
    \Holomorphic_{2}(\C^d)
\end{equation}
leads to the continuity of \eqref{eq:StandardOrder}. Following the
general strategy, $\star_\std$ extends to a continuous product on the
completion
\begin{equation}
    \widehat{\algebra{A}}_{\conv}
    =
    \Holomorphic_{2}(\C^d)
    \times
    \Holomorphic_{2}(\C^d).
\end{equation}
Unfortunately, the space~$\Holomorphic_2(\C^d)$ does not contain
Gaussians, which however arise in typical Hamiltonians, see
\cite[Sec.~3.2 \& 4.1]{esposito.heins.waldmann:2026a:pre} for a partial
remedy based on analyic vectors.

Either way, seeking the convergence of \eqref{eq:StandardOrder}, one is
naturally led to a well-studied object from complex analysis. Returning
to general $\varrho > 0$ and carefully applying the Cauchy estimates,
one may pass to the equivalent set of seminorms given by
\begin{equation}
    \label{eq:SeminormsFiniteOrderTaylor}
    \seminorm{p}_r^{(\varrho)}(f)
    \coloneqq
    \abs{f(0)}
    +
    \sum_{n=1}^\infty
    \frac{r^n}{n!}
    \cdot
    n!^{1/\varrho}
    \cdot
    \sum_{k_1,\ldots,k_n=1}^d
    \abs[\bigg]
    {
        \frac{\partial^n f}{\partial z^{k_1} \cdots \partial z^{k_n}}(0)
    }
\end{equation}
indexed by $r \ge 0$, see e.g. \cite[Sec.~5.4]{waldmann:2014a} for a
detailed proof. Being able to characterize finite order either by global
growth \eqref{eq:SeminormsFiniteOrder} or Taylor
data~\eqref{eq:SeminormsFiniteOrderTaylor} allows for great flexibility.
For instance, proving the continuity of pointwise multiplication is
trivial in the former defining system of seminorms, and a bit of work in
the latter. Likewise, the opposite is true for differentiation, and by
extension the star product itself.

It is the purpose of this paper to extend this conceptually pleasing
relationship into the realm of infinite-dimensional holomorphy. Taking a
step back, we consider the following abstract setup. Let $V$ be a
locally convex space and~$v_1,\ldots,v_n \in V$. Then
\begin{equation}
    \label{eq:PolynomialPrototype}
    V'
    \ni
    v'
    \quad \mapsto \quad
    v'(v_1) \cdots v'(v_n)
    \in
    \C
\end{equation}
defines a polynomial mapping on the dual space $V'$. This gives rise to a
linear map
\begin{equation}
    \label{eq:CanonicalEmbeddingIntro}
    \iota
    \colon
    \Sym^\bullet(V)
    \coloneqq
    \bigoplus_{n=0}^\infty
    \Sym^n(V)
    \longrightarrow
    \Pol(V'),
\end{equation}
which interprets elements of the symmetric tensor algebra
$\Sym^\bullet(V)$ of $V$ as polynomials on its dual space $V'$. In the
above example, i.e. $V = \C^{d}$, choosing a basis
$\basis{e}_1,\ldots,\basis{e}_d$ and using $V' \cong V$ by means of the
dual basis, the map \eqref{eq:CanonicalEmbeddingIntro} is simply given by
\begin{equation}
    \iota
    \bigl(
        \basis{e}_{k_1} \vee \cdots \vee \basis{e}_{k_n}
    \bigr)
    \at[\Big]{z}
    =
    z^{k_1} \cdots z^{k_n}
    \qquad
    \textrm{for }
    k_1, \ldots, k_n
    =
    1, \ldots, d.
\end{equation}
For now, this warrants our notation in
\eqref{eq:CanonicalEmbeddingIntro}, which we will address properly in
Section~\ref{subsec:FunctionsPolynomials}.

Mimicking
\eqref{eq:SeminormsFiniteOrderTaylor} and using the projective tensor
product, \cite[Def.~3.5]{waldmann:2014a} introduced a
locally convex topology on $\Sym^\bullet(V)$, again depending on a
parameter $\varrho > 0$. We will make this precise in
\eqref{eq:SeminormRTopology}, where then $R = 1/\varrho$ provides a
bridge to our earlier discussion. As for \eqref{eq:StandardOrder},
\cite[Lem.~3.10]{waldmann:2014a} then uses these topologies to provide
strict deformations of constant Poisson structures induced by continuous
bilinear mappings.

Returning to \eqref{eq:CanonicalEmbeddingIntro}, this article deals with
the finer structure of $\iota$. In
Theorem~\ref{thm:TensorsAsPolynomials}, we
describe properties of the polynomial functions in its image. They
turn out to carry more structure: most importantly, they are bounded,
i.e. map bounded sets to bounded sets. Moreover, having obtained a
useful topology for its domain, one would like to find a matching
topology for the polynomial functions $\Pol(V')$ such that $\iota$
becomes continuous. Again, boundedness provides the answer in the form
of the associated topology of uniform convergence on bounded sets. It
constitutes the appropriate generalization of the topology of locally
uniform convergence, i.e. the natural Fréchet topology of
spaces of holomorphic functions in finite dimesions. This adresses the
absence of local compactness.

This, in turn, raises the question of how one may describe the image of
the unique linear extension of $\iota$ to the completion of
$\Sym^\bullet(V)$. This leads beyond polynomials and into the realm of
bounded Fréchet holomorphic functions $\Holomorphic(V')$. Indeed, under
suitable assumptions $\iota$ becomes a homeomorphism onto its image,
and the completion of its image may be described as the closure within
$\Holomorphic(V')$. This is the content of
Theorem~\ref{thm:IotaEmbedding1} and Theorem~\ref{thm:IotaEmbedding2}
for two different types of tensor
products. Crucially, this strategy however requires the completeness
of the ambient space $\Holomorphic(V)$, which is the content of
Theorem~\ref{thm:CompletenessBoundedFrechet}, and seems to be new. To
fill the abstract theory with some life, we describe a large class of
locally convex spaces in Example~\ref{ex:NuclearFrechet}, to which our
results apply.

Finally, while many observable algebras for strict deformations will not
be completions of symmetric algebras directly, a number of constructions
have been inspired by its topologies. For instance, in
\cite{heins.roth.waldmann:2023a} the notion of $R$-entire functions on
Lie groups $G$ was introduced, which facilitated the continuity of the
Lie-theoretic generalization of the standard ordered star product from
\eqref{eq:StandardOrder}. Using the theory of analytic continuations and
a left-invariant version of the Cauchy estimates, \cite{heins:2025a:pre}
identified the entire functions as restrictions of globally defined
holomorphic functions on Hochschild's universal complexification~$G_\C$
of~$G$. Moreover, \cite[Rem.~4.16]{heins.roth.waldmann:2023a} provides
an interpretation of the $R$-topologies as entire vectors for the
translation representation. Expanding on these ideas has led to the
central techniques of \cite{esposito.heins.waldmann:2026a:pre}.
Similarly, \cite[Sec.~6]{heins.moucha.roth.sugawa:2025a}
provides a holomorphic interpretation of the observable algebras devised
within \cite{kraus.roth.schoetz.waldmann:2019a} as entire functions in a
distinguished chart.

The paper is structured as follows. We begin by studying the topology of
uniform convergence on bounded sets from a complex-analytic point of
view in Section~\ref{sec:SpacesOfBoundedFunctions}. More precisely,
Section~\ref{subsec:FunctionsBounded} concerns the space of all bounded
functions, Section~\ref{subsec:FunctionsPolynomials} deals with
polynomials and Section~\ref{subsec:FunctionsFrechet} with bounded
Fréchet holomorphic mappings. We proceed carefully, and provide some
counterexamples, as the literature is somewhat sparse and sometimes
contradictory on these topics. Afterwards, we briefly review the most
important properties of projective and injective tensor products, as
well as the associated $R$-topologies in
Section~\ref{sec:TensorAlgebras}. Finally,
Section~\ref{sec:TensorsAsPolynomials} is where both worlds meet. Here,
we state and prove our aforementioned main results
Theorem~\ref{thm:TensorsAsPolynomials}, Theorem~\ref{thm:IotaEmbedding1}
and Theorem~\ref{thm:IotaEmbedding2}.

%% file: TeX/Bounded.tex

Recall that a subset $B$ of a locally convex space $V$ is called bounded
if
\begin{equation}
    \sup_{v \in B}
    \seminorm{q}(v)
    <
    \infty
\end{equation}
for all continuous seminorms $\seminorm{q} \in \cs(V)$ defined on $V$.
Our interest lies not in the bounded sets themselves, but rather in
the functions respecting boundedness, which in turn are
then called bounded themselves. If $U \subseteq V$ is some subset, $W$
another locally convex space and $f \colon U \longrightarrow W$, then we
may rephrase this as the finiteness of the seminorms
\begin{equation}
    \label{eq:SeminormBounded}
    \seminorm{p}_{B,\seminorm{q}}(f)
    \coloneqq
    \sup_{v \in B}
    \seminorm{q}
    \bigl(
        f(v)
    \bigr),
\end{equation}
where $B \subseteq U$ is varied through the bounded subsets of $U$ and
$\seminorm{q}$ through $\cs(W)$. If $W = \C$ and $\seminorm{q} =
\abs{\argument}$, we simply write $\seminorm{p}_B \coloneqq
\seminorm{p}_{B,\abs{\argument}}$. Later, we will also use the
seminorms \eqref{eq:SeminormBounded} for not necessarily bounded sets,
denoted by the same symbol.

The most prominent example of bounded functions are the continuous linear
mappings. If the domain is bornological, boundedness conversely implies
continuity, which lets us encode continuity by means of an estimate in
the familiar manner. In Proposition~\ref{prop:PolynomialsBoundedness},
we shall see that these observations extend to higher order polynomials
defined on first countable locally convex spaces, and
Example~\ref{ex:DiscontinuousBoundedPolynomial} shows
that this may well fail beyond this situation.
Taking a step back, we are led to consider the locally convex space
\begin{equation}
    \label{eq:BoundedFunctions}
    \Bounded(U,W)
    \coloneqq
    \bigl\{
        f
        \colon
        U \longrightarrow W
        \; \big| \;
        \forall_{B \subseteq U \textrm{ bounded}, \,
        \seminorm{q} \in \cs(W)}
        \colon
        \seminorm{p}_{B,\seminorm{q}}(f)
        <
        \infty
    \bigr\}
\end{equation}
of all bounded functions defined on some subset $U \subseteq V$ with
values in another locally convex space $W$. We endow
\eqref{eq:BoundedFunctions} with the locally convex topology
arising from the seminorms \eqref{eq:SeminormBounded}. We call the
resulting topology the topology of uniform convergence on bounded sets or
$\beta$-topology. The proof of the following proposition confirms that
this is indeed appropriate terminology.
\begin{proposition}[Completeness of $\Bounded$]
    \label{prop:BoundedCompleteness}%
    Let $V,W$ be locally convex spaces and $U \subseteq V$ be
    a subset. If $W$ is (sequentially) complete Hausdorff, then so is
    the space~$\Bounded(U,W)$.
\end{proposition}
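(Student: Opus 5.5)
The plan is the standard argument for spaces of uniformly convergent functions, carried out with the seminorms $\seminorm{p}_{B,\seminorm{q}}$ from \eqref{eq:SeminormBounded}. I treat both variants at once: let $(f_\alpha)_{\alpha \in I}$ be a Cauchy net in $\Bounded(U,W)$, respectively a Cauchy sequence in the sequential case.

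First I settle the Hausdorff property. Every singleton $\{v\} \subseteq U$ is bounded, so $\seminorm{p}_{\{v\},\seminorm{q}}(f) = \seminorm{q}(f(v))$ is one of the defining seminorms. If all of these vanish on $f$, then $f(v) = 0$ for every $v \in U$, because $W$ is Hausdorff. Hence $\Bounded(U,W)$ is Hausdorff.

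Next I construct the candidate limit pointwise. For fixed $v \in U$, the estimate $\seminorm{q}(f_\alpha(v) - f_\beta(v)) \le \seminorm{p}_{\{v\},\seminorm{q}}(f_\alpha - f_\beta)$ shows that $(f_\alpha(v))_\alpha$ is a Cauchy net (or sequence) in $W$. By the (sequential) completeness of $W$ it converges to a limit, which I call $f(v)$. This defines a function $f \colon U \longrightarrow W$.

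The main step is to upgrade pointwise convergence to convergence in every seminorm $\seminorm{p}_{B,\seminorm{q}}$, and to show that $f$ is bounded. Fix a bounded set $B \subseteq U$, a seminorm $\seminorm{q} \in \cs(W)$ and $\epsilon > 0$. Choose $\alpha_0$ such that $\seminorm{q}(f_\alpha(v) - f_\beta(v)) \le \epsilon$ for all $v \in B$ and all $\alpha, \beta \succeq \alpha_0$. For fixed $v$ and $\alpha$, let $\beta \to \infty$. Since $\seminorm{q}$ is continuous, this gives $\seminorm{q}(f_\alpha(v) - f(v)) \le \epsilon$ for all $v \in B$, so $\seminorm{p}_{B,\seminorm{q}}(f_\alpha - f) \le \epsilon$ whenever $\alpha \succeq \alpha_0$.

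Boundedness of $f$ then follows from the triangle inequality:
\begin{equation*}
    \seminorm{p}_{B,\seminorm{q}}(f)
    \le
    \seminorm{p}_{B,\seminorm{q}}(f_{\alpha_0})
    +
    \epsilon
    <
    \infty .
\end{equation*}
So $f \in \Bounded(U,W)$, and the previous estimate says precisely that $f_\alpha \to f$ in the $\beta$-topology.

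I do not expect a genuine obstacle. The only point needing care is that the limit $\beta \to \infty$ is taken for each $v$ separately, while $\alpha_0$ depends only on $B$, $\seminorm{q}$ and $\epsilon$ and not on $v$. This independence is exactly what makes the convergence uniform on $B$. Finally, I note that no structure of $U$ beyond its bounded subsets is used, and no continuity of the $f_\alpha$ is required.
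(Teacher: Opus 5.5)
Your proposal is correct and follows essentially the same argument as the paper: pointwise limits via bounded singletons and the completeness of $W$, passing to the limit in the Cauchy estimate via continuity of the seminorm to get uniform convergence on each bounded set, and boundedness of the limit via the triangle inequality against a single $f_{\alpha_0}$. The only cosmetic difference is that you check the Hausdorff property explicitly through the singleton seminorms, whereas the paper phrases it as the $\beta$-topology being finer than the topology of pointwise convergence.
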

\begin{proof}
    Let $(f_\alpha)_{\alpha \in J} \subseteq \Bounded(U,W)$ be a
    Cauchy net. As finite sets are always bounded, the topology of
    pointwise convergence is coarser than the topology of uniform
    convergence on bounded subsets. Consequently, $\Bounded(U,W)$
    is Hausdorff and the net $(f_\alpha)$ converges pointwisely to
    its pointwise limit
    \begin{equation*}
        f
        \colon
        U \longrightarrow W, \quad
        f(v)
        \coloneqq
        \lim_{\alpha \in J}
        f_\alpha(v).
    \end{equation*}
    We prove that this convergence is actually uniform on bounded subsets of $U$.
    To this end, fix $\epsilon > 0$, $\seminorm{q} \in \cs(W)$ and a bounded
    subset $B \subseteq U$. As $(f_\alpha)_{\alpha \in J} \subseteq
    \Bounded(U,W)$ is a Cauchy net, we find an index $\alpha_0 \in J$
    such that
    \begin{equation*}
        \sup_{v \in B}
        \seminorm{q}
        \bigl(
        f_\alpha(v) - f_\beta(v)
        \bigr)
        =
        \seminorm{p}_{B,\seminorm{q}}
        \bigl(
        f_\alpha - f_\beta
        \bigr)
        \le
        \epsilon
        \qquad
        \textrm{for all }
        \alpha, \beta \later \alpha_0.
    \end{equation*}
    Consequently, the continuity of $\seminorm{q} \colon W
    \longrightarrow \R$ and the pointwise convergence at $v$ implies
    \begin{equation*}
        \seminorm{q}
        \bigl(
        f(v) - f_\beta(v)
        \bigr)
        =
        \lim_{\alpha \in J}
        \seminorm{q}
        \bigl(
        f_\alpha(v) - f_\beta(v)
        \bigr)
        \le
        \epsilon
        \qquad
        \textrm{for all }
        v \in B
        \textrm{ and }
        \beta \later \alpha_0.
    \end{equation*}
    Taking the supremum over $v \in B$ yields
    \begin{equation}
        \label{eq:BoundedCompletenessProof}
        \seminorm{p}_{B,\seminorm{q}}
        \bigl(
        f
        -
        f_\beta
        \bigr)
        \le
        \epsilon
        \qquad
        \textrm{for all }
        \beta \later \alpha_0.
        \tag{$\diamondsuit$}
    \end{equation}
    This establishes the uniform convergence $f_\alpha \rightarrow
    f$ on bounded subsets of $V$. It remains to show that $f$ is
    bounded itself. To prove this, keep the objects as above, say
    for the particular choice $\epsilon \coloneqq 1$. Then
    \eqref{eq:BoundedCompletenessProof} gives
    \begin{equation*}
        \sup_{v \in B}
        \seminorm{q}
        \bigl(
        f(v)
        \bigr)
        =
        \seminorm{p}_{B,\seminorm{q}}
        \bigl(
        f
        \bigr)
        \le
        \seminorm{p}_{B,\seminorm{q}}
        \bigl(
        f - f_{\alpha_0}
        \bigr)
        +
        \seminorm{p}_{B,\seminorm{q}}
        \bigl(
        f_{\alpha_0}
        \bigr)
        \le
        1
        +
        \seminorm{p}_{B,\seminorm{q}}
        \bigl(
        f_{\alpha_0}
        \bigr)
        <
        \infty
    \end{equation*}
    by boundedness of $f_{\alpha_0}$. Varying $B$ and $\seminorm{q}$ proves the
    boundedness of $f$, which establishes the completeness of
    $\Bounded(U,W)$. Replacing the nets with sequences throughout
    our considerations allows to infer the inheritance of
    sequential completeness in the same manner.
\end{proof}

Recall that a locally convex space $V$ is called Montel if every
bounded subset $B \subseteq V$ has compact closure. In this case, we
get the following interplay between continuity and boundedness of
functions defined on closed sets.
\begin{lemma}
    \label{lem:Montel}
    Let $V$ and $W$ be locally convex spaces such that $V$ is Montel and
    $W$ is Hausdorff. Moreover, let $f \colon C \longrightarrow W$ be a
    continuous function defined on a closed subset~$C \subseteq V$. Then
    $f$ is bounded.
\end{lemma}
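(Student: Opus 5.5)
The plan is to reduce boundedness of $f$ to the classical fact that continuous images of compact sets are compact, and hence bounded. Fix a bounded subset $B \subseteq C$ and a continuous seminorm $\seminorm{q} \in \cs(W)$; the goal is to show that $\seminorm{p}_{B,\seminorm{q}}(f) = \sup_{v \in B} \seminorm{q}(f(v)) < \infty$. Since $B$ is bounded in $C$, it is in particular bounded as a subset of $V$. The Montel property of $V$ then makes its closure $K \coloneqq \overline{B}$ (taken in $V$) compact.

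The first step is to check that $K$ lies in the domain of $f$. Because $C$ is closed in $V$ and $B \subseteq C$, we get $K = \overline{B} \subseteq \overline{C} = C$. The closedness hypothesis is used exactly here, so that $f$ is defined on all of $K$.

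Next, $K$ is compact in $V$ and contained in $C$, so it is also compact in the subspace topology of $C$. The restriction $f\at{K} \colon K \longrightarrow W$ is continuous, hence $f(K) \subseteq W$ is compact. Composing with the continuous map $\seminorm{q} \colon W \longrightarrow \R$, the set $\seminorm{q}(f(K))$ is a compact subset of $\R$ and therefore bounded. Consequently
\begin{equation*}
    \seminorm{p}_{B,\seminorm{q}}(f)
    \le
    \sup_{v \in K} \seminorm{q}\bigl(f(v)\bigr)
    <
    \infty.
\end{equation*}
Since $B$ and $\seminorm{q}$ were arbitrary, $f$ is bounded.

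There is no real obstacle in this argument. The only point needing care is the one above: closures must be taken in $V$, where compactness comes from the Montel property, and one needs $\overline{B} \subseteq C$, which is what closedness of $C$ provides. The Hausdorff assumption on $W$ is not needed for boundedness itself. It just matches the standing conventions, since compact images are bounded in any topological vector space.
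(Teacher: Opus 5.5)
Your proof is correct and follows the paper's argument: the Montel property makes $\overline{B}$ compact, closedness of $C$ puts $\overline{B}$ inside the domain of $f$, and the continuous image $f(\overline{B})$ is then compact and hence bounded. The only difference is that you compose with a continuous seminorm to land in $\mathbb{R}$ directly, and your observation is right that the Hausdorff assumption on $W$ is not needed, since compact subsets of any topological vector space are bounded (the paper invokes Hausdorffness at exactly this step).
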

\begin{proof}
    Let $B \subseteq C$ be bounded. Invoking the Montel property,
    its closure $B^\cl \subseteq C^\cl = C$ is compact. Hence, the
    same is true for its image $f(B^\cl)$ under the continuous
    mapping~$f$. As a compact set, the image $f(B^\cl)$ is a
    bounded superset of~$f(B)$ by the Hausdorff property of $W$. This
    proves the boundedness of $f(B)$, and by variation of $B$, also of
    $f$.
\end{proof}

In particular, globally defined continuous functions are bounded.

%% file: TeX/Polynomials.tex

Having reminded and convinced ourselves of these general results, we
turn towards polynomial mappings between, unless explicitly stated
otherwise, complex locally convex spaces~$V$ and $W$. This notion is
discussed comprehensively within \cite[Ch.~1]{dineen:1999a}. Given an
$n$-linear mapping $L \colon V \times \cdots \times V \longrightarrow W$,
we call
\begin{equation}
    \label{eq:PolynomialFromMultilinear}
    P
    \colon
    V \longrightarrow W, \quad
    P(v)
    \coloneqq
    L(v, \ldots, v)
\end{equation}
the polynomial induced by $P$. It is homogeneous of degree $n \in
\field{N}_0$, i.e.
\begin{equation}
    \label{eq:Homogeneity}
    P(z \cdot v)
    =
    z^n
    \cdot
    P(v)
    \qquad
    \textrm{for all }
    z \in \field{C},
    v \in V.
\end{equation}
Thus, we speak of homogeneous polynomials of degree $n$ in the
sequel, and refer to linear combinations of such functions as
polynomials. This yields a $\Z$-graded vector space, and in the case of
$W = \C$ even a $\Z$-graded algebra. Note that, in negative degrees we
simply put the zero vector space.

Remarkably, one may always reconstruct the symmetric part
\begin{equation}
    \label{eq:SymmetricPart}
    L_\symmetric
    \colon
    V^n \longrightarrow W, \quad
    L_\symmetric(v_1, \ldots, v_n)
    \coloneqq
    \frac{1}{n!}
    \sum_{\sigma \in \group{S}_n}
    L
    \bigl(
    v_{\sigma(1)}, \ldots, v_{\sigma(n)}
    \bigr)
\end{equation}
of $L$ from $P$ in an explicit manner by means of polarization. Here,
$\group{S}_n$
denotes the symmetric group in the letters $\{1,\ldots,n\}$.
\begin{proposition}[Polarization identity,
{\cite[Cor.~1.6]{dineen:1999a}}]
    \label{prop:Polarization}
    Let $V, W$ be vector spaces over a field with characteristic zero
    and let~$P \colon V \longrightarrow W$ an $n$-homogeneous polynomial
    induced by $L \colon V^n \longrightarrow W$. Then
    \begin{equation}
        \label{eq:Polarization}
        L_\symmetric(v_1, \ldots, v_n)
        =
        \frac{1}{2^n \cdot n!}
        \sum_{\epsilon_1, \ldots, \epsilon_n = \pm 1}
        \epsilon_1 \cdots \epsilon_n
        \cdot
        P
        \biggl(
        \sum_{k=1}^{n}
        \epsilon_k v_k
        \biggr)
    \end{equation}
    for $v_1, \ldots, v_n \in V$.
\end{proposition}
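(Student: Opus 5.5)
We prove the polarization identity \eqref{eq:Polarization} by expanding the right-hand side multilinearly and showing that only the terms with each index occurring exactly once survive the alternating sum over signs.

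\textbf{Step 1: multilinear expansion.} Since $P(v) = L(v,\ldots,v)$ and $L$ is $n$-linear,
\begin{equation*}
    P\biggl(\sum_{k=1}^n \epsilon_k v_k\biggr)
    =
    \sum_{j_1,\ldots,j_n=1}^{n}
    \epsilon_{j_1}\cdots\epsilon_{j_n}
    \cdot
    L(v_{j_1},\ldots,v_{j_n}).
\end{equation*}
This is pure algebra and holds over any field.

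\textbf{Step 2: the sign sums.} Multiply by $\epsilon_1\cdots\epsilon_n$ and sum over all $\epsilon \in \{\pm1\}^n$. For a fixed index tuple $j=(j_1,\ldots,j_n)$, let $m_k$ be the number of occurrences of $k$ in $j$. The coefficient of $L(v_{j_1},\ldots,v_{j_n})$ then factorizes as
\begin{equation*}
    \prod_{k=1}^n \sum_{\epsilon_k=\pm1} \epsilon_k^{\,m_k+1}.
\end{equation*}
Each factor equals $2$ if $m_k$ is odd and $0$ if $m_k$ is even. Since $\sum_k m_k = n$ and all $n$ of the $m_k$ must be odd (hence $\ge 1$), we get $m_k = 1$ for every $k$. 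So only tuples $j$ that are permutations $\sigma\in\group{S}_n$ survive, each with coefficient $2^n$.

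\textbf{Step 3: conclusion.} The right-hand side becomes
\begin{equation*}
    \frac{2^n}{2^n\, n!}\sum_{\sigma\in\group{S}_n} L\bigl(v_{\sigma(1)},\ldots,v_{\sigma(n)}\bigr),
\end{equation*}
which is exactly $L_\symmetric(v_1,\ldots,v_n)$ as defined in \eqref{eq:SymmetricPart}.

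The characteristic-zero hypothesis is used only to make sense of the division by $2^n\,n!$; it also makes $\pm1$ distinct, so the sign sums in Step~2 are computed correctly.

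The only real care needed is in Step~2: one must verify that the sign sum factorizes into a product over $k$, and then count parities to rule out every non-permutation tuple. Everything else is routine bookkeeping.
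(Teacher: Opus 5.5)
Your proof is correct: the sign sum factorizes coordinatewise, the parity count forces $m_k = 1$ for every $k$, and the $n!$ surviving permutation tuples each carry weight $2^n$, which is exactly the normalization in \eqref{eq:Polarization}. The paper gives no proof of its own and only cites Dineen. Your expand-and-cancel-signs computation is the standard argument for this identity, and expanding over all index tuples (rather than using the multinomial formula for a symmetric $L$, as is common) has the small advantage of handling a non-symmetric $L$ directly, without first replacing it by $L_\symmetric$.
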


Thus, algebraically, symmetric multilinear mappings and
polynomials are one and the same. That is to say, $L_\symmetric$ is the
unique symmetric $n$-linear mapping inducing~$P$. In the sequel, we
denote it shorthand by $\widecheck{P}$. The continuity of $P$ and
$\widecheck{P}$ are in direct correspondence by \eqref{eq:Polarization}.
The same can be said about boundedness by the following estimate.

Recall that a subset $S \subseteq V$ of a complex vector space $V$ is
called balanced if $z \cdot v \in S$ for all $v \in S$ and $z \in \C$
with~$\abs{z} = 1$. If the set~$S$ is moreover convex, one refers to it
as absolutely convex.
\begin{corollary}[Polarization estimate,
{\cite[(1.14)]{dineen:1999a}}]
    \label{cor:PolynomialContinuityVsMultilinear}%
    Let $P \colon V \longrightarrow W$ be a homogeneous polynomial of
    degree $n \in \field{N}$ between locally convex spaces $V$ and $W$.
    Then
    \begin{equation}
        \label{eq:PolarizationEstimate}
        \seminorm{p}_{B,\seminorm{q}}(P)
        \le
        \sup_{v_1, \ldots, v_n \in B}
        \seminorm{q}
        \bigl(
        \widecheck{P}(v_1, \ldots, v_n)
        \bigr)
        \le
        \frac{n^n}{n!}
        \cdot
        \seminorm{p}_{B,\seminorm{q}}(P)
    \end{equation}
    holds for absolutely convex subsets $B \subseteq V$ and
    $\seminorm{q} \in \cs(W)$.
\end{corollary}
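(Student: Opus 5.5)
The lower bound is immediate. Taking $v_1 = \cdots = v_n = v$ for $v \in B$ gives $\widecheck{P}(v,\ldots,v) = P(v)$, so $\seminorm{q}(P(v))$ is one of the values over which the middle supremum is taken. Taking the supremum over $v \in B$ yields the first inequality; this needs no hypothesis on $B$ at all.

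For the upper bound I start from the polarization identity in Proposition~\ref{prop:Polarization}, applied to $L = \widecheck{P}$, whose symmetric part is $\widecheck{P}$ itself. Fix $v_1, \ldots, v_n \in B$ and rescale the identity by writing
\begin{equation*}
    \sum_{k=1}^n \epsilon_k v_k = n \cdot \frac{1}{n} \sum_{k=1}^n \epsilon_k v_k .
\end{equation*}
Homogeneity \eqref{eq:Homogeneity} then gives
\begin{equation*}
    P\Bigl(\sum_{k=1}^n \epsilon_k v_k\Bigr) = n^n \cdot P\Bigl(\frac{1}{n}\sum_{k=1}^n \epsilon_k v_k\Bigr).
\end{equation*}
The key observation is that $\frac{1}{n}\sum_k \epsilon_k v_k$ is a convex combination of the points $\epsilon_k v_k$. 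Each $\epsilon_k v_k$ lies in $B$ because $B$ is balanced and $\abs{\epsilon_k} = 1$, so the convex combination lies in $B$ because $B$ is convex. Hence
\begin{equation*}
    \seminorm{q}\Bigl(P\bigl(\tfrac{1}{n}\textstyle\sum_k \epsilon_k v_k\bigr)\Bigr) \le \seminorm{p}_{B,\seminorm{q}}(P).
\end{equation*}

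Next I apply the triangle inequality for $\seminorm{q}$ to the polarization sum. It has $2^n$ terms, each with coefficient $\epsilon_1 \cdots \epsilon_n$ of modulus one, and each term is bounded by $n^n \, \seminorm{p}_{B,\seminorm{q}}(P)$. Multiplying by the prefactor $\frac{1}{2^n n!}$ gives
\begin{equation*}
    \seminorm{q}\bigl(\widecheck{P}(v_1,\ldots,v_n)\bigr) \le \frac{1}{2^n n!} \cdot 2^n \cdot n^n \, \seminorm{p}_{B,\seminorm{q}}(P) = \frac{n^n}{n!}\, \seminorm{p}_{B,\seminorm{q}}(P).
\end{equation*}
Taking the supremum over $v_1, \ldots, v_n \in B$ finishes the proof. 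If $\seminorm{p}_{B,\seminorm{q}}(P) = \infty$, the upper bound holds trivially.

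The only real step is the rescaling by $1/n$, which moves the argument back into $B$ via absolute convexity. Everything else is bookkeeping of constants.
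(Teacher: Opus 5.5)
Your proof is correct and is essentially the argument the paper relies on: the paper gives no proof of its own, stating the estimate as a corollary of the polarization identity~\eqref{eq:Polarization} and citing Dineen, whose derivation is exactly your rescaling $\sum_k \epsilon_k v_k = n \cdot \frac{1}{n}\sum_k \epsilon_k v_k$, combined with absolute convexity of $B$ and the triangle inequality over the $2^n$ sign choices. You only use balancedness for the scalars $\pm 1$, so your argument also works with the paper's definition of balanced sets, which requires only $\abs{z} = 1$.
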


Next, we note an elementary continuity estimate for translations on the
space $\Pol^n_\beta(V,W)$.
\begin{proposition}[Translations of Polynomials,
{\cite[Lem.~1.10]{dineen:1999a}}]
    \label{prop:PolynomialsTranslations}
    Let $V$ and $W$ be locally convex spaces,~$B \subseteq V$ be a
    balanced subset, $\seminorm{q} \in \cs(W)$ and $P \colon
    V \longrightarrow W$ be an $n$-homogeneous polynomial for some $n \in
    \N_0$. Then
    \begin{equation}
        \label{eq:PolynomialsTranslationForward}
        \seminorm{p}_{B,\seminorm{q}}
        (P)
        \le
        \seminorm{p}_{v_0 + B,\seminorm{q}}
        (P)
        \qquad
        \textrm{for }
        v_0 \in V.
    \end{equation}
    If $B$ is moreover convex and $r > 0$ as well as $v_0 \in V$ are
    such that $r v_0 \in B$, then conversely
    \begin{equation}
        \label{eq:PolynomialsTranslationBack}
        \seminorm{p}_{B + v_0,\seminorm{q}}
        (P)
        \le
        \bigl(
        1 + 1/r
        \bigr)^n
        \cdot
        \seminorm{p}_{B,\seminorm{q}}
        (P).
    \end{equation}
\end{proposition}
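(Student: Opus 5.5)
The plan is to prove both inequalities in Proposition~\ref{prop:PolynomialsTranslations} by reducing them to suprema over scalar multiples of points of $v_0 + B$, exploiting the homogeneity \eqref{eq:Homogeneity} together with a Cauchy-type averaging. For the convex statement we instead expand $P(v_0+v)$ via the symmetric multilinear map $\widecheck{P}$ and estimate each term with the polarization estimate, Corollary~\ref{cor:PolynomialContinuityVsMultilinear}.

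\emph{First inequality \eqref{eq:PolynomialsTranslationForward}.} Fix $v \in B$ and consider the one-variable polynomial $g(z) \coloneqq P(v_0 + z v)$ for $z \in \C$. Writing $P(v_0 + zv) = \sum_{k=0}^n \binom{n}{k} z^k \widecheck{P}(v_0^{n-k}, v^k)$ shows that $g$ is a $W$-valued polynomial of degree at most $n$ whose coefficient of $z^n$ is $P(v)$. By the Cauchy integral formula on the unit circle,
\begin{equation*}
    P(v)
    =
    \frac{1}{2\pi}
    \int_0^{2\pi}
    e^{-\I n t}
    \,
    P\bigl(v_0 + e^{\I t} v\bigr)
    \D t.
\end{equation*}
Since $g$ is polynomial, this identity can be checked purely algebraically, coefficientwise, so no vector-valued integration theory is needed. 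Alternatively one can use the discrete average over the $(n+1)$-st roots of unity, which avoids integrals altogether. Balancedness of $B$ gives $e^{\I t} v \in B$, hence $v_0 + e^{\I t}v \in v_0 + B$. Applying $\seminorm{q}$ and the triangle inequality (or convexity of the seminorm for the finite average) yields $\seminorm{q}(P(v)) \le \seminorm{p}_{v_0+B,\seminorm{q}}(P)$. Taking the supremum over $v \in B$ finishes this part.

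\emph{Second inequality \eqref{eq:PolynomialsTranslationBack}.} Now let $B$ be absolutely convex and $r v_0 \in B$. For $v \in B$ we expand
\begin{equation*}
    P(v + v_0)
    =
    \sum_{k=0}^n
    \binom{n}{k}
    r^{-(n-k)}
    \,
    \widecheck{P}\bigl(v^{k}, (r v_0)^{n-k}\bigr),
\end{equation*}
where all arguments lie in $B$. The naive route of bounding each term by the middle quantity of \eqref{eq:PolarizationEstimate} introduces the unwanted factor $n^n/n!$, so this is the step I expect to be the main obstacle: obtaining exactly $(1+1/r)^n$. The remedy is to avoid polarization and instead use convexity directly. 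Observe that
\begin{equation*}
    v + v_0
    =
    (1 + 1/r)
    \cdot
    \Bigl(
        \tfrac{r}{r+1} v + \tfrac{1}{r+1}(r v_0)
    \Bigr),
\end{equation*}
where the bracket is a convex combination of $v \in B$ and $r v_0 \in B$ and therefore lies in $B$. Homogeneity then gives $P(v+v_0) = (1+1/r)^n P(w)$ with $w \in B$, so $\seminorm{q}(P(v+v_0)) \le (1+1/r)^n \seminorm{p}_{B,\seminorm{q}}(P)$. Taking the supremum over $v \in B$ concludes the proof. Here only convexity is needed; balancedness is used only in the first part, and the trivial case $n=0$ is immediate in both.
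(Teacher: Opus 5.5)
Your proof is correct. Extracting $P(v)$ as the top coefficient of $z \mapsto P(v_0 + zv)$ by averaging over the unit circle, or over the $(n+1)$-st roots of unity, gives the first inequality. The inclusion $v_0 + B \subseteq (1+1/r)B$ for convex $B$ with $rv_0 \in B$, together with homogeneity, gives the second with exactly the constant $(1+1/r)^n$.

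The paper gives no proof of its own and only cites \cite[Lem.~1.10]{dineen:1999a}, so there is nothing to compare against. Your argument is the standard one, and its first half is just the locally convex Cauchy estimate \eqref{eq:CauchyEstimatesLocallyConvex} with radius one, applied to $f = P$, whose $n$-th Taylor polynomial at every point $v_0$ is $P$ itself.
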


Note that we have not assumed the boundedness of $B$ within
Proposition~\ref{prop:PolynomialsTranslations}. Typically, this will
lead to the degeneration of \eqref{eq:PolynomialsTranslationForward} and
\eqref{eq:PolynomialsTranslationBack} to the vacuous statement $\infty
\le \infty$. In the proof of Proposition~\ref{prop:FrechetBaire} we are
going to encounter a rather special situation, where this is not the
case.

We return to continuity: As the continuity of $\widecheck{P}$ is
equivalent to its continuity at zero, the same is true for $P$. Using
the local convexity, this may in turn be rephrased as a continuity
estimate. Indeed, an $n$-homogeneous polynomial $P$ is continuous at
zero iff for every $\seminorm{q} \in \cs(W)$ there exists some
$\seminorm{p} \in \cs(V)$ such that
\begin{equation}
    \label{eq:PolynomialContinuityEstimate}
    \seminorm{q}
    \bigl(
        P(v)
    \bigr)
    \le
    \seminorm{p}(v)^n
    \qquad
    \textrm{for }
    v \in V.
\end{equation}

Consequently, every continuous polynomial is bounded. Assuming
first countability of the domain, which in particular implies that
continuity reduces to sequential continuity, we may show the
converse. Before doing so, we note as a preliminary consideration that
both continuity and boundedness are decided within each homogeneous
degree individually.
\begin{lemma}
    \label{lem:Homogeneity}%
    Let $P \colon V \longrightarrow W$ be a polynomial between
    Hausdorff locally convex spaces with homogeneous components
    $P_n$ for $n \in \N_0$.
    \begin{lemmalist}
        \item \label{item:HomogeneityContinuity}%
        The polynomial $P$ is continuous iff all $P_n$ are.

        \item \label{item:HomogeneityBoundedness}%
        The polynomial $P$ is bounded iff all $P_n$ are.
    \end{lemmalist}
\end{lemma}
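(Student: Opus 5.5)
The plan is to recover each homogeneous component $P_n$ from $P$ through an operation that is built only from finitely many evaluations of $P$, using scalar multiples and a fixed finite set of complex scalars. Write $P = \sum_{k=0}^N P_k$ with only finitely many nonzero components; this is possible since $P$ is a polynomial. By the homogeneity \eqref{eq:Homogeneity},
\begin{equation*}
    P(z \cdot v)
    =
    \sum_{k=0}^{N} z^k \cdot P_k(v)
    \qquad
    \textrm{for all }
    z \in \C,
    v \in V.
\end{equation*}
Choose $\zeta \coloneqq e^{2\pi \I/(N+1)}$. Orthogonality of characters of the cyclic group then gives the discrete Cauchy formula
\begin{equation*}
    P_n(v)
    =
    \frac{1}{N+1}
    \sum_{j=0}^{N}
    \zeta^{-jn}
    \cdot
    P\bigl(\zeta^{j} v\bigr)
    \qquad
    \textrm{for } 0 \le n \le N.
\end{equation*}
Alternatively, one may pick $N+1$ distinct real scalars and invert the Vandermonde matrix. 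Either way, each $P_n$ is a fixed finite linear combination of the functions $v \mapsto P(\lambda_j v)$.

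For the continuity part, the direction ``$\Leftarrow$'' is immediate, because finite sums of continuous maps are continuous. For ``$\Rightarrow$'', each map $v \mapsto P(\lambda_j v)$ is a composition of $P$ with the continuous dilation $v \mapsto \lambda_j v$, so the displayed formula exhibits $P_n$ as a finite linear combination of continuous maps. Hence $P_n$ is continuous.

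For the boundedness part, ``$\Leftarrow$'' holds because finite sums of bounded maps are bounded: $\seminorm{p}_{B,\seminorm{q}}(P) \le \sum_k \seminorm{p}_{B,\seminorm{q}}(P_k)$. For ``$\Rightarrow$'', let $B$ be bounded and $\seminorm{q} \in \cs(W)$. The set $B' \coloneqq \bigcup_j \lambda_j B$ is again bounded, since it is a finite union of scalar multiples of a bounded set. The formula then yields
\begin{equation*}
    \seminorm{p}_{B,\seminorm{q}}(P_n)
    \le
    C \cdot \seminorm{p}_{B',\seminorm{q}}(P)
    <
    \infty,
\end{equation*}
where $C$ is a constant depending only on $N$ and the chosen scalars. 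When the roots of unity are used, $C = 1$ and $B' = \{ z v : |z| = 1, v \in B\}$, the balanced hull of $B$.

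There is one point to check. The decomposition into homogeneous components must be unique, so that ``the'' $P_n$ is well-defined. This also follows from the same formula, and it is where the Hausdorff property of $W$ (or simply working with actual functions into $W$) enters. Apart from this, no real obstacle remains: the only potential subtlety is ensuring finiteness of the degree, which is built into the definition of a polynomial as a finite linear combination of homogeneous ones.
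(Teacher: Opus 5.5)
Your argument is correct and rests on the same idea as the paper's, namely recovering $P_n$ from the dilates $z \mapsto P(zv)$, but you carry it out differently.

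The paper treats the two parts separately. For continuity it uses the Cauchy integral over the unit circle. For boundedness it runs a sequential argument, interpolating the polynomial $z \mapsto P(z z_k v_k)$ at $z = 0, \ldots, N$ along sequences $(v_k) \subseteq B$ and null sequences $(z_k)$.

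Your root-of-unity average is the discretization of that integral, and your Vandermonde variant is the paper's interpolation. Because your formula is a finite linear combination, it settles both parts at once. Continuity needs no vector-valued integration and no justification for passing continuity under the integral sign. Boundedness comes out as the explicit estimate $\seminorm{p}_{B,\seminorm{q}}(P_n) \le \seminorm{p}_{B',\seminorm{q}}(P)$ with $B' = \bigcup_j \zeta^j B$.

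This direct estimate is also cleaner than the paper's sequential route. There, the intermediate assertion that boundedness of $P$ makes $P(z z_k v_k)$ tend to zero does not follow as stated; it already fails for a nonzero constant $P$. What boundedness really gives is that $\{P(\lambda v_k)\}_k$ is bounded for each fixed $\lambda$, and interpolating that is precisely your argument. The paper's integral formulation mainly buys consistency with the Cauchy-integral description of Taylor polynomials in Section~\ref{subsec:FunctionsFrechet}.

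One minor point: uniqueness of the homogeneous decomposition is purely algebraic, as your formula shows, so the Hausdorff property of $W$ plays no role there.
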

\begin{proof}
    Assuming the boundedness of all $P_n$ immediately implies
    \begin{equation*}
        \seminorm{p}_{B,\seminorm{q}}(P)
        \le
        \sum_{n=0}^\infty
        \seminorm{p}_{B,\seminorm{q}}(P_n)
        <
        \infty
    \end{equation*}
    for bounded subsets $B \subseteq V$ and $\seminorm{q} \in
    \cs(W)$, as all but finitely many $P_n$ are zero and the
    remaining seminorms are finite by assumption. Likewise, it is clear
    that the continuity of all $P_n$ implies the continuity of $P$.
    Assuming conversely that $P$ is continuous, we may use the
    vector-valued Cauchy integral formula\footnote{A proper explanation
    of this point of view can be found at the beginning of
    Section~\ref{subsec:FunctionsFrechet}.} to recover
    \begin{equation}
        P_n(v)
        =
        \frac{1}{2\pi}
        \int_{0}^{2\pi}
        \E^{-\I n t}
        \cdot
        P(\E^{\I t} \cdot v)
        \D t
        \qquad
        \textrm{for }
        v \in V
    \end{equation}
    from $P$ by virtue of its homogeneity. But then it is clear that
    $P_n$ inherits the continuity of $P$. For the analogous implication
    for boundedness, let $B \subseteq V$ be bounded, $(v_k) \subseteq B$
    and $(z_k) \subseteq \C$ a zero sequence. By boundedness of
    $P$, we then know that
    \begin{equation}
        \label{eq:PolynomialOnZeroSequence}
        P(z \cdot z_k \cdot v_k)
        =
        \sum_{n=0}^\infty
        z^n
        \cdot
        P_n(z_k \cdot v_k)
        \tag{$\heartsuit$}
    \end{equation}
    converges to zero in $W$ for every fixed $z \in \C$, and our
    goal is to show that this already implies the convergence of
    $P_n(z_k \cdot v_k)$ to zero for all $n \in \N_0$. The crucial
    point is again that $P_n = 0$ for all $n > N$ with a suitable
    $N \in \N$. Viewing \eqref{eq:PolynomialOnZeroSequence} as a
    polynomial in $z$, we may now use polynomial interpolation as
    e.g. described in the textbook
    \cite[Sec.~9.2]{humpherys.jarvis:2020a} to
    write its coefficients $P_n(z_k \cdot v_k)$ as a linear
    combination of its values $P(z \cdot z_k \cdot v_k)$, say at~$z =
    0,\ldots,N$. But then the convergence of $P(j \cdot z_k
    \cdot v_k) \rightarrow 0$ readily implies the convergence of
    each coefficient $P_n(z_k \cdot v_k) \rightarrow 0$.
\end{proof}

The second ingredient we shall need is the algebraic Taylor expansion
\begin{equation}
    \label{eq:PolynomialTaylor}
    P(v+w)
    =
    \sum_{k=0}^{n}
    \binom{n}{k}
    \widecheck{P}
    \bigl(
    \underbrace{v,\ldots,v}_{k\text{-times}},
    \underbrace{w,\ldots,w}_{(n-k)\text{-times}}
    \bigr)
    \qquad
    \textrm{for }
    v,w \in V
\end{equation}
of any polynomial $P \colon V \longrightarrow W$ of homogeneous
degree $n$, which is \cite[Lem.~1.9]{dineen:1999a}.
\begin{proposition}
    \label{prop:PolynomialsBoundedness}%
    Let $P \colon V \longrightarrow W$ be a bounded polynomial
    between Hausdorff locally convex spaces~$V$ and $W$ such that $V$ is
    first countable. Then $P$ is continuous.
\end{proposition}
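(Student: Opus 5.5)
By Lemma~\ref{lem:Homogeneity}, it suffices to treat the case where $P$ is homogeneous of some degree $n \in \N_0$. Part~\ref{item:HomogeneityBoundedness} tells us that every homogeneous component $P_n$ of $P$ is bounded. Part~\ref{item:HomogeneityContinuity} then reduces continuity of $P$ to continuity of each $P_n$. The case $n = 0$ is trivial, since then $P$ is constant. For $n \ge 1$, continuity of $P$ is equivalent to continuity of $\widecheck{P}$ by the polarization identity of Proposition~\ref{prop:Polarization}. Since $V$ is first countable, and hence so is $V^n$, it suffices to prove sequential continuity. Concretely, I would show that $P$ is continuous at every point $v \in V$.

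Continuity at zero comes first. I would argue by contradiction. Suppose $P$ is not continuous at $0$. Then there are $\seminorm{q} \in \cs(W)$ and a sequence $v_k \to 0$ in $V$ with $\seminorm{q}(P(v_k)) \ge 1$ for all $k$. The key step is the classical Mackey-type trick available in metrizable spaces: if $v_k \to 0$, there exist scalars $\lambda_k \to \infty$ such that $\lambda_k v_k \to 0$ still.

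To build the $\lambda_k$, fix an increasing countable basis of seminorms $\seminorm{p}_1 \le \seminorm{p}_2 \le \cdots$ of $V$. Choose indices $k_1 < k_2 < \cdots$ with $\seminorm{p}_m(v_k) \le 1/m^2$ for all $k \ge k_m$. Then set $\lambda_k = m$ for $k_m \le k < k_{m+1}$. With this choice, $\seminorm{p}_j(\lambda_k v_k) \le 1/m$ as soon as $m \ge j$, so $\lambda_k v_k \to 0$.

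A convergent sequence is bounded, so $B \coloneqq \{\lambda_k v_k\}$ is a bounded set. Homogeneity then gives
\begin{equation*}
    \seminorm{q}\bigl(P(\lambda_k v_k)\bigr)
    =
    \lambda_k^n\, \seminorm{q}\bigl(P(v_k)\bigr)
    \ge
    \lambda_k^n
    \longrightarrow
    \infty,
\end{equation*}
which contradicts $\seminorm{p}_{B,\seminorm{q}}(P) < \infty$. Hence $P$ is continuous at zero, which yields an estimate of the form \eqref{eq:PolynomialContinuityEstimate}.

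To pass from zero to an arbitrary point, I would use the algebraic Taylor expansion \eqref{eq:PolynomialTaylor}:
\begin{equation*}
    P(v + w) - P(v)
    =
    \sum_{k=0}^{n-1} \binom{n}{k}\,
    \widecheck{P}(v, \ldots, v, w, \ldots, w),
\end{equation*}
where each summand contains at least one copy of $w$. Continuity of $P$ at zero gives continuity of $\widecheck{P}$ at zero via polarization. By multilinearity this upgrades to a continuity estimate
\begin{equation*}
    \seminorm{q}\bigl(\widecheck{P}(u_1, \ldots, u_n)\bigr)
    \le
    \seminorm{p}(u_1) \cdots \seminorm{p}(u_n),
\end{equation*}
and hence to continuity of $\widecheck{P}$ everywhere. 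Consequently every term in the expansion tends to $0$ as $w \to 0$, so $P$ is continuous at $v$.

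The main obstacle is the construction of the scaling sequence $\lambda_k$, since this is exactly where first countability enters. Everything else follows from homogeneity and the polarization results already stated.
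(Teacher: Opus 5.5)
Your proof is correct and follows essentially the same route as the paper. You reduce to a homogeneous component via Lemma~\ref{lem:Homogeneity}, take a null sequence whose images stay away from zero in some $\seminorm{q}$, and use a countable ascending system of seminorms to rescale it by scalars tending to infinity while keeping it bounded, so that homogeneity makes $\seminorm{q} \circ P$ blow up on a bounded set. The differences are only organizational: the paper argues contrapositively and first translates the point of discontinuity to zero, whereas you reduce to homogeneous components first and then show that continuity at zero propagates everywhere; also, your $1/m^2$ choice gives $\lambda_k v_k \to 0$ rather than mere boundedness of $(n \cdot w_{M_n})_n$.
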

\begin{proof}
    We prove that discontinuous polynomials are unbounded. As
    translations constitute homeomorphisms on locally convex spaces, we
    may assume that our polynomial is discontinuous at zero. Here we use
    that, by virtue of \eqref{eq:PolynomialTaylor}, the translated
    polynomial is still a polynomial. By Lemma~\ref{lem:Homogeneity},
    \ref{item:HomogeneityContinuity} we then get a discontinuous
    homogeneous component~$P \coloneqq P_k$. This yields a zero sequence
    $(v_n)_n \subseteq V$ such that its image $(P(v_n))_n$ is not
    converging to zero in $W$. Hence, there exists a continuous seminorm
    $\seminorm{q} \in \cs(W)$, a subsequence $(w_n)_n$ of $(v_n)_n$ and
    $r > 0$ such that
    \begin{equation*}
        \seminorm{q}
        \bigl(
        P(w_n)
        \bigr)
        \ge
        r
        >
        0
        \qquad
        \textrm{for }
        n \in \N_0.
    \end{equation*}
    By first countability, we moreover find a countable ascending
    defining system of seminorms~$\{\seminorm{p_n}\}$ for $V$. As
    $(w_n)_n$ is a zero sequence, there exist strictly ascending
    $M_n \in \N$ such that
    \begin{equation*}
        \seminorm{p}_n(w_m)
        \le
        \frac{1}{n}
        \qquad
        \textrm{for }
        m \ge M_n.
    \end{equation*}
    Consider now the subsequence $(w_{M_n})_n$ of $(v_n)_n$. By
    construction, it fulfils
    \begin{equation*}
        \seminorm{q}
        \bigl(
        P(w_{M_n})
        \bigr)
        \ge
        r
        \qquad
        \textrm{for }
        n \in \N
    \end{equation*}
    and thus
    \begin{equation*}
        \seminorm{q}
        \bigl(
        P(n \cdot w_{M_n})
        \bigr)
        =
        n^k
        \cdot
        \seminorm{q}
        \bigl(
        P(w_{M_n})
        \bigr)
        \ge
        n^k
        \cdot
        r
        \overset{n \rightarrow \infty}{\longrightarrow}
        \infty.
    \end{equation*}
    Conversely, if $\ell \in \N$ and $n \ge \ell$, then
    \begin{equation*}
        \seminorm{p}_\ell
        (n \cdot w_{M_n})
        \le
        n
        \cdot
        \seminorm{p}_n
        (w_{M_n})
        \le
        n
        \cdot
        \frac{1}{n}
        =
        1,
    \end{equation*}
    as our defining system is ascending. This covers almost all
    indices, which means
    \begin{equation*}
        \sup_{n \in \N}
        \seminorm{p}_\ell
        (n \cdot w_{M_n})
        <
        \infty.
    \end{equation*}
    Variation of $\ell \in \N$ asserts the boundedness of $(n \cdot
    w_{M_n})_n$, showing the unboundedness of~$P$. Finally, invoking
    Lemma~\ref{lem:Homogeneity}, \ref{item:HomogeneityBoundedness} this
    implies the unboundedness of the full polynomial, completing the
    proof.
\end{proof}

Beyond first countable domains,
Proposition~\ref{prop:PolynomialsBoundedness} may fail, even in the
bornological setting. Thus general polynomials behave differently than
linear mappings in this regard.
\begin{example}
    \label{ex:DiscontinuousBoundedPolynomial}%
    Consider the doubly indexed sequence spaces
    \begin{equation}
        V_n
        \coloneqq
        \bigl\{
        (a_{k\ell})_{k,\ell \in \N_0}
        \;\big|\;
        \forall_{\ell \ge n, k \in \N_0}
        \colon
        a_{k \ell}
        =
        0
        \bigr\}
        \qquad
        \textrm{for }
        n \in \N
    \end{equation}
    endowed with the locally convex topology induced by the seminorms
    \begin{equation}
        \seminorm{q}^{(n)}_{k_0}(a)
        \coloneqq
        \max_{0 \le k \le k_0}
        \max_{0 \le \ell < n}
        \abs[\big]
        {
            a_{k \ell}
        }
    \end{equation}
    indexed by $k_0 \in \N$. Leaving out the zero columns provides
    isomorphisms
    \begin{equation}
        V_n
        \cong
        \biggl(
            \prod_{m=0}^\infty \C
        \biggr)^n
    \end{equation}
    of locally convex spaces, where we endow the Cartesian products
    with the Cartesian product topology arising from the unique Hausdorff
    topological vector space topology of~$\C$. Hence, the spaces
    $V_n$ are naturally ascending Fréchet spaces with linear
    embeddings
    \begin{equation}
        V_n \hookrightarrow V_{n+1}
        \qquad
        \textrm{for }
        n \in \N.
    \end{equation}
    That is to say, $V \coloneqq \varinjlim V_n$ constitutes a
    strict LF space, which as a set is just the union over all
    $V_n$. In particular, it is not first countable by Baire's
    Theorem. A convenient and concrete defining system of
    seminorms for $V$ is given by
    \begin{equation}
        \seminorm{q}_{r,d}(a)
        \coloneqq
        \sup_{\ell \in \N_0}
        r_\ell
        \cdot
        \sup_{0 \le k \le d_\ell}
        \abs[\big]
        {
            a_{k\ell}
        },
    \end{equation}
    where we vary $r \in \Map(\N_0,(0,\infty))$ and $d \in
    \Map(\N_0,\N_0)$. Indeed, their continuity when restricted to
    each $V_n$ is straightforward. Conversely, starting with a
    continuous seminorm on $V$, the continuity estimates of its
    restrictions to $V_n$ allow for the construction of suitable
    sequences $r$ and $d$. For brevity, we write
    \begin{equation}
        J
        \coloneqq
        \Map
        \bigl(
            \N_0,(0,\infty)
        \bigr)
        \times
        \Map(\N_0,\N_0),
    \end{equation}
    which becomes a directed set with respect to the pointwise order in
    both arguments. Consider the quadratic polynomial
    \begin{equation}
        \label{eq:DiscontinuousPolynomial}
        P
        \colon
        V \longrightarrow \C, \quad
        P(a)
        \coloneqq
        \sum_{n=0}^{\infty}
        a_{n0} \cdot a_{0n},
    \end{equation}
    where the series of course terminates for every $a \in V$ and is
    thus only formally infinite. Restricting $P$ to $V_N$ yields
    continuous polynomials $P_N$, as
    \begin{equation}
        \abs[\big]
        {P_N(a)}
        \le
        \sum_{n=0}^{N-1}
        \abs[\big]
        {a_{n0}}
        \cdot
        \abs[\big]
        {a_{0n}}
        \le
        N
        \cdot
        \seminorm{q}^{(N)}_{N}(a)
        \cdot
        \seminorm{q}^{(N)}_0(a).
    \end{equation}
    In particular, $P_N$ is bounded. Now, if $B \subseteq V$
    is bounded, then we find some $N \in \N_0$ such that $B
    \subseteq V_{N}$ is a bounded subset by regularity of
    countable strict inductive limits, see for instance \cite[Sec.~4.6
    Theorem~2]{jarchow:1981a}. But then $P(B) = P_N(B)$
    is bounded by continuity of~$P_N$. Variation of $B$ establishes
    the boundedness of $P$. In passing, we note that this also
    means that
    \begin{equation}
        P_N \rightarrow P
        \qquad
        \textrm{uniformly on bounded subsets},
    \end{equation}
    where we identify $P_N$ with the $N$-th partial sum of the
    series \eqref{eq:DiscontinuousPolynomial}. We shall return to this
    observation in a moment, after establishing the discontinuity of
    $P$. To this end, we define a net $(a_{r,d})_{(r,d) \in J} \subseteq
    V$ of sequences $a_{r,d}$ with entries
    \begin{equation}
        a_{r,d}(0,0)
        \coloneqq
        \frac{1}{r_0},
        \quad
        a_{r,d}(0,d_0+1)
        \coloneqq
        \frac{1}{r_{d_0+1}}
        \qquad \textrm{as well as} \qquad
        a_{r,d}(d_0+1,0)
        \coloneqq
        r_{d_0 + 1}
    \end{equation}
    and zero otherwise. Then, on the one hand,
    \begin{equation}
        \label{eq:ToyModelNetInBalls}
        \seminorm{q}_{r,d}(a_{r,d})
        =
        \sup_{\ell \in \N_0}
        r_\ell
        \cdot
        \sup_{0 \le k \le d_\ell}
        \abs[\big]
        {
            a_{r,d}(k,\ell)
        }
        =
        1,
    \end{equation}
    as the seminorm only sees the non-zero entries at $(0,0)$ and
    $(0,d_0+1)$, but not the one at $(d_0+1,0)$. On the other hand, we
    have
    \begin{equation}
        \label{eq:ToyModelNetPolynomial}
        P(a_{r,d})
        =
        \frac{1}{r_0^2}
        +
        \frac{1}{r_{d_0+1}}
        \cdot
        r_{d_0+1}
        \ge
        1,
    \end{equation}
    as the first term is always positive. Taking a step back, we
    have established the discontinuity of $P$. It is
    straightforward to prove that $P$ is nevertheless both
    sequentially and separately continuous by using the regularity
    of the inductive limit and its universal property,
    respectively.
\end{example}

Returning to functional analytic considerations, we write
$\Pol^n_\beta(V,W)$ for the space of continuous polynomials between
locally convex spaces $V$ and $W$ endowed with the topology of
uniform convergence on bounded subsets. The following statement can
be found in \cite[Prop.~1.30]{dineen:1981a}, albeit without proof.
Moreover, it does not assume first countability, which by
Example~\ref{ex:DiscontinuousBoundedPolynomial} however makes the
claim erroneous.
\begin{proposition}
    \label{prop:CompletenessHomogeneousPolynomials}%
    Let $V$ be a first countable locally convex space and $W$
    be (sequentially) complete Hausdorff. Then $\Pol^n_\beta(V,W)$
    is (sequentially) complete Hausdorff for all $n \in \N_0$.
\end{proposition}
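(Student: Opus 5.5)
The plan is to realize $\Pol^n_\beta(V,W)$ as a subspace of $\Bounded(V,W)$ and to show that this subspace is (sequentially) closed. Completeness then follows from Proposition~\ref{prop:BoundedCompleteness}. Every continuous polynomial is bounded by the continuity estimate \eqref{eq:PolynomialContinuityEstimate}, so $\Pol^n_\beta(V,W) \subseteq \Bounded(V,W)$ carries exactly the subspace topology. The Hausdorff property is immediate, since the $\beta$-topology is finer than pointwise convergence and $W$ is Hausdorff.

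Next, let $(P_\alpha)$ be a Cauchy net (respectively sequence) in $\Pol^n_\beta(V,W)$. By Proposition~\ref{prop:BoundedCompleteness} it converges uniformly on bounded sets to some bounded $P \in \Bounded(V,W)$. It remains to show that $P$ is an $n$-homogeneous polynomial and that it is continuous.

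For the polynomial property, I would pass to the symmetric multilinear maps $\widecheck{P}_\alpha$. Fix $v_1,\ldots,v_n$. The polarization identity \eqref{eq:Polarization} expresses $\widecheck{P}_\alpha(v_1,\ldots,v_n)$ as a finite linear combination of values $P_\alpha\bigl(\sum_k \epsilon_k v_k\bigr)$. These values converge pointwise, so $L(v_1,\ldots,v_n) \coloneqq \lim_\alpha \widecheck{P}_\alpha(v_1,\ldots,v_n)$ exists. As a pointwise limit of symmetric $n$-linear maps, $L$ is again symmetric and $n$-linear. Moreover $L(v,\ldots,v) = \lim_\alpha P_\alpha(v) = P(v)$, so $P$ is an $n$-homogeneous polynomial with $\widecheck{P} = L$.

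For continuity, $P$ is bounded because it lies in $\Bounded(V,W)$. Since $V$ is first countable, Proposition~\ref{prop:PolynomialsBoundedness} shows that $P$ is continuous. This is precisely where first countability enters, and Example~\ref{ex:DiscontinuousBoundedPolynomial} shows it cannot be dropped: there the partial sums converge uniformly on bounded sets to a discontinuous limit. Hence $P \in \Pol^n_\beta(V,W)$, and $P_\alpha \to P$ in its topology.

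The sequential variant runs verbatim with sequences, using the sequential half of Proposition~\ref{prop:BoundedCompleteness}. The only step requiring real input is the continuity of the limit, and it is already handled by Proposition~\ref{prop:PolynomialsBoundedness}. The rest is the routine observation that polarization commutes with pointwise limits.
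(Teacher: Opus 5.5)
Your proposal is correct and is essentially the paper's own proof. You embed $\Pol^n_\beta(V,W)$ into the complete space $\Bounded(V,W)$, use polarization to see that the limit is again an $n$-homogeneous polynomial, and then invoke Proposition~\ref{prop:PolynomialsBoundedness}, the only place where first countability enters, to obtain its continuity.
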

\begin{proof}
    The case $n=0$ corresponds just to the assumptions on $W$. Thus
    let $n \in \N$. Using once again that continuous polynomials
    are bounded, we get the embedding
    \begin{equation*}
        \Pol^n_\beta(V,W)
        \subseteq
        \Bounded(V,W).
    \end{equation*}
    As the space of bounded mappings $\Bounded(V,W)$ is complete
    Hausdorff by Proposition~\ref{prop:BoundedCompleteness}, this means
    that $\Pol^n_\beta(V,W)$ is Hausdorff itself and that it
    suffices to establish the closedness of $\Pol^n_\beta(V,W)$
    within $\Bounded(V,W)$. To this end, let
    \begin{equation*}
        (P_\alpha)_{\alpha \in J}
        \subseteq
        \Pol^n_\beta(V,W)
    \end{equation*}
    be a convergent net with limit $P \in \Bounded(V,W)$. As the
    pointwise convergence of $(P_\alpha)_{\alpha \in J}$
    corresponds to pointwise convergence of the corresponding net
    $(\widecheck{P}_\alpha)$ of $n$-linear mappings by
    Proposition~\ref{prop:Polarization}, it is clear that $P$ is an
    $n$-homogeneous polynomial itself. But then invoking
    Proposition~\ref{prop:PolynomialsBoundedness} yields its
    continuity.
\end{proof}

%% file: TeX/Frechet.tex
The final type of bounded functions we are interested in are the bounded
Fréchet holomorphic ones. Let us take a moment to clarify what we
mean by this. Our presentation is adapted from the textbook
\cite[Ch.~3]{dineen:1999a}, which contains a variety of additional
material, but mostly limits itself to the simpler situation of globally
defined mappings.

Let $V$ and $W$ be again Hausdorff locally convex spaces and $U
\subseteq V$ an open connected subset, which we refer to as a
domain in the sequel. We call a mapping~$f \colon U \longrightarrow W$
\emph{Fréchet holomorphic} and write $f \in
\Holomorphic(U,W)$ if it is continuous and the functions of one complex
variable $z$ given by
\begin{equation}
    \label{eq:Gateaux}
    z
    \mapsto
    w'
    \bigl(
        f(v_0 + zv)
    \bigr)
\end{equation}
are holomorphic on their domains for all $v_0 \in U$, $v \in V$ and
$w' \in W'$. Remarkably, already the latter condition, which is
known as G\^ateaux holomorphy in the literature, ensures the existence
of pointwisely convergent Taylor expansions
\begin{equation}
    \label{eq:TaylorSeries}
    f(v_0 + v)
    =
    \sum_{n=0}^{\infty}
    P_{n,v_0}(v)
\end{equation}
with unique $n$-homogeneous polynomials $P_n \colon V \longrightarrow
\widehat{W}$, explicitly given by the Cauchy integrals
\begin{equation}
    \label{eq:TaylorPolynomials}
    P_{n, v_0}(v)
    =
    \frac{1}{2 \pi \I r}
    \int_{r \mathbb{S}^1}
    \frac{f(v_0 + zv)}{z^{n+1}}
    \D z.
\end{equation}
Here, we denote the completion of $W$ by $\widehat{W}$, and it would
in principle suffice to have sequential completeness to ensure the
existence of the vector-valued Riemann integrals over the contour
given by traversing the suitably scaled\footnote{Given $v_0 \in U$
and $v \in V$, one has to choose $r > 0$ such that $v_0 + zv \in U$ for
all $z \in \mathbb{S}^1$. The resulting integral does not depend on the
choice of $r$.} unit circle $\mathbb{S}^1 \subseteq \C$ once in positive
direction. In the sequel, we denote the subspace of G\^ateaux
holomorphic functions by adding a subscript $G$, e.g. by writing
$\Holomorphic_G(U,W)$.

A crucial detail is then that like in finite dimensions, the Taylor
expansion \eqref{eq:TaylorSeries} converges for all $v \in U$ such that
the line segment
\begin{equation}
    [v_0,v]
    \coloneqq
    \bigl\{
        (1-t)v_0 + tv
        \colon
        t \in [0,1]
    \bigr\}
    \subseteq
    U,
\end{equation}
regardless of the finer properties of the function $f$. For later use,
we introduce the more precise notation
\begin{equation}
    \label{eq:Polydisks}
    \mathfrak{B}(U,v_0)
    \coloneqq
    \bigl\{
        B \subseteq V
        \colon
        B \;
        \textrm{is an absolutely convex bounded set with} \;
        v_0 + B
        \subseteq
        U
    \bigr\}
\end{equation}
for any open $U \subseteq V$ and $v_0 \in U$. Indeed, if $B \in
\Bdd(U,v_0)$ is non-empty, then $B$ is starlike with starpoint zero by
absolute convexity, and thus $v_0 + B$ is starlike with starpoint~$v_0$.
Note that there is always a wealth of absolutely convex
bounded sets, as passing to absolutely convex hulls preserves
boundedness. We think of the collection \eqref{eq:Polydisks} as a
locally convex adaptation of polydisks.

Returning to \eqref{eq:TaylorPolynomials} it is moreover immediate that
the continuity of $f$ implies the continuity of each $P_{n,v_0}$, i.e.
$P_{n,v_0} \in \Pol^n_\beta(V,W)$. Our next goal is to establish the
converse for polynomials defined on first countable spaces, which is at
the heart of Theorem~\ref{thm:CompletenessBoundedFrechet}. A first
ingredient is the following locally convex incarnation of the Cauchy
estimates, which may readily be inferred from
\eqref{eq:TaylorPolynomials}.
\begin{proposition}[Locally convex Cauchy estimates,
    {\cite[(3.12)]{dineen:1999a}}]
    \label{prop:CauchyEstimatesLocallyConvex}
    Let $V$ and $W$ be Hausdorff locally convex spaces, $U \subseteq V$
    be open and $f \in \Holomorphic_G(U,W)$ with Taylor
    expansion~\eqref{eq:TaylorSeries} around $v_0 \in U$. Then, for
    every balanced
    subset $B \subseteq V$, we have
    \begin{equation}
        \label{eq:CauchyEstimatesLocallyConvex}
        \seminorm{p}_{B,\seminorm{q}}(P_{n,v_0})
        \le
        r^{-n}
        \cdot
        \seminorm{p}_{v_0 + rB,\seminorm{q}}(f)
    \end{equation}
    for $\seminorm{q} \in \cs(W)$, $n \in \N_0$ and $r >0$ with $v_0
    + rB \subseteq U$.
\end{proposition}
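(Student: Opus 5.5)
We derive the estimate directly from the Cauchy integral formula \eqref{eq:TaylorPolynomials}, one point of $B$ at a time, and then pass to the supremum.

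\textbf{Step 1: reduction to a single point.} Fix $\seminorm{q} \in \cs(W)$, $n \in \N_0$, and $r > 0$ with $v_0 + rB \subseteq U$. We may assume $\seminorm{p}_{v_0 + rB,\seminorm{q}}(f) < \infty$, since otherwise the claim is vacuous. Fix $v \in B$. By balancedness, $zrv \in rB$ for all $z \in \mathbb{S}^1$. Hence the circle $\{v_0 + zrv \colon \abs{z} = 1\}$ lies in $v_0 + rB \subseteq U$.

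\textbf{Step 2: the integral on the scaled circle.} Homogeneity \eqref{eq:Homogeneity} gives $P_{n,v_0}(v) = r^{-n} P_{n,v_0}(rv)$. We apply \eqref{eq:TaylorPolynomials} to the vector $rv$ with radius $1$, which is admissible because of Step~1:
\begin{equation*}
    P_{n,v_0}(rv)
    =
    \frac{1}{2\pi}
    \int_0^{2\pi}
    \E^{-\I n t}
    \cdot
    f\bigl(v_0 + \E^{\I t} r v\bigr)
    \D t .
\end{equation*}
Here we parametrize $z = \E^{\I t}$, so that $\D z / z^{n+1} = \I\, \E^{-\I n t} \D t$.

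\textbf{Step 3: the seminorm estimate.} The integral is a Riemann integral with values in $\widehat{W}$. We extend $\seminorm{q}$ continuously to $\widehat{W}$ and apply it to the Riemann sums. By the triangle inequality, each Riemann sum is bounded by $\sup_t \seminorm{q}\bigl(f(v_0 + \E^{\I t} rv)\bigr) \le \seminorm{p}_{v_0 + rB,\seminorm{q}}(f)$. Passing to the limit uses the continuity of the extended seminorm. This yields
\begin{equation*}
    \seminorm{q}\bigl(P_{n,v_0}(v)\bigr)
    \le
    r^{-n} \cdot \seminorm{p}_{v_0 + rB,\seminorm{q}}(f).
\end{equation*}
Taking the supremum over $v \in B$ gives \eqref{eq:CauchyEstimatesLocallyConvex}.

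\textbf{Main obstacle.} The only delicate point is justifying the integral when $f$ is merely Gâteaux holomorphic. In that case $t \mapsto f(v_0 + \E^{\I t} rv)$ need not obviously be continuous in $W$, so existence of the Riemann integral in $\widehat{W}$ is not automatic.

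The plan is to work weakly instead. For each $w' \in W'$, the scalar function $z \mapsto w'(f(v_0 + zrv))$ is holomorphic near the closed unit disc, by the Gâteaux condition and openness of $U$. The classical Cauchy formula then gives $w'(P_{n,v_0}(rv)) = \frac{1}{2\pi}\int_0^{2\pi} \E^{-\I n t}\, w'(f(v_0 + \E^{\I t}rv)) \D t$, so $\abs{w'(P_{n,v_0}(rv))} \le \sup_t \abs{w'(f(v_0 + \E^{\I t} rv))}$.

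To recover the $\seminorm{q}$-estimate, we use the Hahn–Banach representation $\seminorm{q}(w) = \sup\{\abs{w'(w)} \colon \abs{w'} \le \seminorm{q}\}$, valid on $\widehat{W}$. Taking this supremum over functionals dominated by $\seminorm{q}$ on both sides gives the bound of Step~3 without needing vector-valued integration at all. This weak argument is the route I would actually write down.
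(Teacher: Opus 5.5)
Your strategy is the paper's own: the paper only says the estimate can be read off from the Cauchy integral formula \eqref{eq:TaylorPolynomials}. Your scalarized version is a clean way to carry this out without vector-valued integration: apply the classical Cauchy formula to $w'\circ f$, then take the supremum over $\abs{w'}\le\seminorm{q}$ using Hahn--Banach on $\widehat{W}$.

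There is, however, a genuine gap between Step~1 and your claim that $z\mapsto w'(f(v_0+zrv))$ is holomorphic near the \emph{closed unit disc}. Balancedness in the sense you use (and the paper states), namely $zB\subseteq B$ for $\abs{z}=1$, only puts the \emph{circle} $\{v_0+zrv\colon\abs{z}=1\}$ into $U$. Openness of $U$ does not fill in the disc. Yet the contour integral over the unit circle returns the $n$-th Taylor coefficient at $z=0$ only if the function is holomorphic on the whole closed disc.

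This is not pedantic: with only unimodular invariance the estimate is false. Take $V=W=\C$, $U=\C\setminus\{1/2\}$, $f(z)=(z-1/2)^{-1}$, $v_0=0$, $B=\mathbb{S}^1$ and $r=1$. The Taylor polynomials at $0$ are $P_{n,0}(v)=-2^{n+1}v^n$, so $\seminorm{p}_{B,\abs{\cdot}}(P_{n,0})=2^{n+1}$, whereas $\seminorm{p}_{\mathbb{S}^1,\abs{\cdot}}(f)=2$. Correspondingly, the integral in your Step~2 (with $v=1$) evaluates to $0$ rather than $P_{n,0}(1)$, because the residue at $1/2$ cancels the one at $0$.

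The repair is to use balancedness in the standard sense, $\lambda B\subseteq B$ for all $\abs{\lambda}\le 1$. This is automatic for absolutely convex sets: convexity together with $-B=B$ gives $0\in B$, and then $tB\subseteq B$ for $t\in[0,1]$. It therefore covers the sets in $\Bdd(U,v_0)$ and the balls in Corollary~\ref{cor:GateauxTaylorContinuity}, i.e.\ every place where the paper applies the estimate.

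With this notion, $\{v_0+zrv\colon\abs{z}\le 1\}\subseteq v_0+rB\subseteq U$. The set $\{z\in\C\colon v_0+zrv\in U\}$ is open and contains the compact closed unit disc, so it contains a disc of radius $1+\delta$ for some $\delta>0$. The scalar Cauchy formula on the unit circle is then legitimate, and the rest of your argument goes through as written. The paper's footnote to \eqref{eq:TaylorPolynomials} has the same imprecision: the integral is independent of the radius only among radii whose closed disc is mapped into $U$.
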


\begin{remark}
    \label{rem:DegenerationOfInequality}%
    One has to be somewhat mindful when applying our formulation of the
    locally convex Cauchy estimates: for general balanced sets $B
    \subseteq V$ and G\^ateaux holomorphic $f \in \Holomorphic_G(U,W)$,
    the seminorms $\seminorm{p}_{v_0 + rB,\seminorm{q}}(f)$ appearing as
    upper bounds within \eqref{eq:CauchyEstimatesLocallyConvex} have no
    reason to be finite for any $r > 0$ with $v_0 + rB \subseteq U$. In
    this case, the statement simply becomes vacuous. In the sequel, we
    will mostly be interested in bounded subsets~$B$ and bounded
    functions $f$, in which case the seminorms will always be finite. In
    Example~\ref{ex:HolomorphicUnbounded} we will see that boundedness
    is an additional property that does not automatically follow from
    Fréchet holomorphy.
\end{remark}

The locally convex Cauchy estimates facilitate several continuity
statements regarding the passage from holomorphic functions and
expansion points towards Taylor polynomials and their values. A first
instance is the following:
\begin{corollary}
    \label{cor:GateauxTaylorContinuity}
    Let $V,W$ be locally convex spaces such that $V$ is Hausdorff.
    Moreover, let $U \subseteq V$ be an absolutely convex domain, $v_0
    \in U$ and $n \in \N_0$. Then the linear projection
    \begin{equation}
        \label{eq:TaylorPolynomialProjection}
        P_{n,v_0}
        \colon
        \Holomorphic_G(U,W)
        \longrightarrow
        \Holomorphic_G(V,W)
    \end{equation}
    is continuous with respect to the topology of locally uniform
    convergence on finite dimensional subspaces.
\end{corollary}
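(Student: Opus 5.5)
We have to show that $P_{n,v_0}$ is continuous when both $\Holomorphic_G(U,W)$ and $\Holomorphic_G(V,W)$ carry the topology of locally uniform convergence on finite dimensional subspaces. This topology is generated by the seminorms $\seminorm{p}_{K,\seminorm{q}}$, where $\seminorm{q} \in \cs(W)$ and $K$ ranges over the compact subsets of the domain that lie in some finite dimensional subspace. Since $P_{n,v_0}$ is linear, it suffices to control each such seminorm of $P_{n,v_0}(f)$ by finitely many seminorms of $f$. The tool will be the locally convex Cauchy estimates, Proposition~\ref{prop:CauchyEstimatesLocallyConvex}.

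We first check that the map is well defined. For $f \in \Holomorphic_G(U,W)$, formula \eqref{eq:TaylorPolynomials} exhibits $P_{n,v_0}(f)$ as an $n$-homogeneous polynomial on $V$. Polynomials are Gâteaux holomorphic, since $z \mapsto w'(P(v_1 + z v))$ is a polynomial in $z$ by \eqref{eq:PolynomialTaylor}. The values lie in $\widehat{W}$, so we tacitly read the target in this way, or use that $\seminorm{q}$ extends to $\widehat{W}$.

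Now fix $\seminorm{q} \in \cs(W)$ and a compact set $K \subseteq F$, where $F \subseteq V$ is finite dimensional. We may replace $K$ by its absolutely convex hull within $F$, which is again compact and balanced, and we may assume $v_0 \in F$ by enlarging $F$. Since $U$ is open and $F$ is Hausdorff and finite dimensional, $U \cap F$ is an open neighbourhood of $v_0$ in $F$ and $K$ is bounded there. Hence there exists $r > 0$ with $v_0 + rK \subseteq U \cap F$. Shrinking $r$ slightly, we can even ensure that $L \coloneqq v_0 + rK$ is a compact subset of $U \cap F$; it is compact as the image of a compact set under a continuous affine map. Proposition~\ref{prop:CauchyEstimatesLocallyConvex}, applied with $B = K$, then yields
\begin{equation*}
    \seminorm{p}_{K,\seminorm{q}}\bigl(P_{n,v_0}(f)\bigr)
    \le
    r^{-n} \cdot \seminorm{p}_{L,\seminorm{q}}(f).
\end{equation*}
Here $L$ is a compact subset of $U$ contained in the finite dimensional subspace $F$, so the right hand side is a defining seminorm of the source topology. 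This is exactly the required continuity estimate, and $r$ depends only on $K$ and $v_0$, not on $f$.

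The only delicate point is the existence of $r$: we need $v_0 + rK \subseteq U$ with $K$ compact in a finite dimensional space. The plan is to use that $K$ is bounded in $F$ and that $U \cap F$ contains a ball around $v_0$ in some norm on $F$, so a small multiple of $K$ fits into that ball. The hypothesis that $U$ is absolutely convex is not really needed for this step. It may serve to make the Cauchy integral formula unambiguous, or to guarantee that $v_0 + zv \in U$ along the relevant circles. Finiteness of the right hand side is automatic, because $z \mapsto f(v_0 + z v)$ is continuous on compact discs. So the degeneracy warned about in Remark~\ref{rem:DegenerationOfInequality} does not arise once we restrict to finite dimensional subspaces, where Gâteaux holomorphic functions are locally bounded by the finite dimensional theory (Hartogs).
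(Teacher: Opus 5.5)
Your proof is correct and is essentially the paper's argument: you restrict to a finite dimensional subspace $F \ni v_0$, apply the locally convex Cauchy estimates to a compact balanced set whose translate by $v_0$ lies in $U \cap F$, and invoke Hartogs to see that the bounding seminorm is finite. The only cosmetic difference is that the paper applies the estimate once to a fixed norm ball around $v_0$ in $F$ and then reaches an arbitrary compact $K$ via homogeneity, picking up a factor $R^n/r^n$. You instead rescale the balanced hull of $K$ directly into $U \cap F$, so the homogeneity is absorbed into the factor $r^{-n}$.
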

\begin{proof}
    Let $F \subseteq V$ be a finite dimensional subspace. Without loss
    of generality, we may assume $v_0 \in F$. Then $F \cap U \subseteq
    F$ is an open neighbourhood of $v_0$ and thus contains a compact
    ball $K_0 \coloneqq \Ball_r(v_0)^\cl \subseteq F$ with respect to
    some auxiliary norm. By uniqueness of the locally convex Hausdorff
    topology on $F$, the ball $K_0$ is compact also with respect to the
    subspace topology induced by $V$. In particular, it is bounded by
    the Hausdorff property of $V$. The locally convex Cauchy estimates
    \eqref{eq:CauchyEstimatesLocallyConvex} then yield
    \begin{equation*}
        \sup_{v \in \Ball_r(0)}
        \seminorm{q}
        \bigl(
        P_{n,v_0}(v)
        \bigr)
        \le
        \sup_{v \in K_0}
        \seminorm{q}
        \bigl(
        f(v)
        \bigr)
        \qquad
        \textrm{for }
        n \in \N_0.
    \end{equation*}
    Note that the warning from Remark~\ref{rem:DegenerationOfInequality}
    does not apply here, as the restriction $f \at{F \cap U}$ is
    continuous and as such bounded on the compact set $K_0$ by Hartogs'
    Theorem on separate holomorphicity as it can e.g. be found in
    \cite[Thm.~1.2.5]{krantz:2001a}. If now $K \subseteq F$ is an
    arbitrary compact set, then $K \subseteq \Ball_R(0)$ for some $R >
    0$ and thus the homogeneity \eqref{eq:Homogeneity} and what we have
    already proved yield
    \begin{equation*}
        \sup_{v \in K}
        \seminorm{q}
        \bigl(
        P_{n,v_0}(v)
        \bigr)
        \le
        \frac{R^n}{r^n}
        \sup_{v \in \Ball_r(v_0)}
        \seminorm{q}
        \bigl(
        f(v)
        \bigr).
    \end{equation*}
    Variation of $F$ completes the proof.
\end{proof}

The same ideas also show that the Taylor series \eqref{eq:TaylorSeries}
converges locally uniformly on finite dimensional subspaces. A nice
consequence of this is that the continuity of Taylor polynomials is
consistent across the domain, i.e. either $P_{n,v}$ is continuous for
either none or all $v \in U$. To show this, we also need the algebraic
Taylor expansion \eqref{eq:PolynomialTaylor}.
\begin{lemma}[Consistency Lemma]
    \label{lem:TaylorPolynomialsContinuityPropagation}%
    Let $V,W$ be locally convex spaces such that $W$ is sequentially
    complete Hausdorff, $U \subseteq V$ be a domain and $f \in
    \Holomorphic_G(U,W)$. Assume furthermore that there is some point
    $v_0 \in U$ such that the Taylor polynomials $P_{n,v_0}$ are
    continuous for all $n \in \N_0$. Then the Taylor polynomials
    $P_{n,v}$ are continuous for all $v \in U$ and $n \in \N_0$.
\end{lemma}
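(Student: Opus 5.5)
A connectedness argument. Let $S \subseteq U$ be the set of points $v \in U$ at which all Taylor polynomials $P_{n,v}$, $n \in \N_0$, are continuous. By assumption $v_0 \in S$. Since $U$ is a domain, and hence connected, it suffices to show that $S$ is both open and closed in $U$. In fact I will show slightly more: whenever $v \in S$ and $w \in U$ are joined by a line segment inside $U$ along which one can re-expand, continuity transfers from $v$ to $w$.

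\emph{Step 1: re-expansion.} Let $v \in U$ and $w \in V$ with $v + tw \in U$ for all $|t|$ slightly larger than $1$. Write $f(v + w + h)$ through the Taylor series at $v$, namely $\sum_m P_{m,v}(w+h)$. Expanding each term via the algebraic Taylor formula \eqref{eq:PolynomialTaylor} gives
\[
P_{m,v}(w+h) = \sum_{n=0}^m \binom{m}{n} \widecheck{P}_{m,v}(w,\ldots,w,h,\ldots,h),
\]
with $n$ copies of $h$. Collecting the $n$-homogeneous parts in $h$ yields the candidate
\[
P_{n,v+w}(h) = \sum_{m \ge n} \binom{m}{n} \widecheck{P}_{m,v}(w^{m-n}, h^n).
\]
To justify the rearrangement and the identification, restrict to the finite-dimensional subspace spanned by $v$, $w$, $h$ and use ordinary several-variable holomorphy there; Hartogs applies because $f$ is Gâteaux holomorphic. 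Uniqueness of Taylor polynomials then confirms the formula pointwise. Alternatively, one can differentiate under the Cauchy integral \eqref{eq:TaylorPolynomials} in the $h$-direction.

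\emph{Step 2: continuity of the re-expanded polynomial.} This is the main obstacle. Each summand $h \mapsto \widecheck{P}_{m,v}(w^{m-n},h^n)$ is a continuous $n$-homogeneous polynomial whenever $v \in S$, by polarization (Proposition~\ref{prop:Polarization}). A pointwise limit of such polynomials need not be continuous, however. The remedy is to get a uniform estimate near $0$. Since $P_{m,v}$ is continuous, for fixed $m$ there is a seminorm with $\seminorm{q}(P_{m,v}(x)) \le \seminorm{p}_m(x)^m$, but this seminorm depends on $m$, so Cauchy estimates along $h$ are unavailable a priori. Instead I would choose $\rho > 1$ with $v + \rho w \in U$ and split $h$ so that everything reduces to the one-dimensional direction $w$. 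Applying the one-variable Cauchy estimate in $z \mapsto f(v + z w)$ gives geometric decay $\rho^{-m}$ in the $w$-slots. For the $h$-slots, I would expand $\widecheck{P}_{m,v}(w^{m-n},h^n)$ through the polarization identity \eqref{eq:Polarization} applied to $P_{m,v}$ at points $w + \epsilon h$ with small $\epsilon$. This controls $\sup_{h \in \text{nbhd}}$ by values of $f$ on a set of the form $v + \rho w + \epsilon' h$.

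Arranging the neighbourhood so that it is independent of $m$ is exactly where the difficulty lies. My fallback is to avoid needing a uniform neighbourhood. Continuity of an $n$-homogeneous polynomial reduces to continuity at $0$. The series converges in $\Pol^n_\beta$ restricted to finite-dimensional subspaces by Corollary~\ref{cor:GateauxTaylorContinuity}. Using sequential completeness of $W$, one can then show that $P_{n,v+w}$ is locally bounded near $0$ on all of $V$, by controlling the tail uniformly via the geometric factor. A locally bounded homogeneous polynomial is continuous.

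\emph{Step 3: openness and closedness.} Openness follows from Step 2 with $w$ small, taking a convex neighbourhood of $v$ in $U$. For closedness, let $w \in U \setminus S$ lie in the closure of $S$. Choose a convex neighbourhood $N \subseteq U$ of $w$ and a point $v \in N \cap S$. Applying Step 2 with the segment from $v$ to $w$, extended slightly inside $N$, gives $w \in S$, a contradiction. By connectedness, $S = U$.
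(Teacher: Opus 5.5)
\textbf{There is a genuine gap in Step~2, and it cannot be closed in the stated generality.} Your re-expansion $P_{n,v+w}(h)=\sum_{m\ge n}\binom{m}{n}\widecheck{P}_{m,v}(w^{m-n},h^n)$ is correct. You also correctly diagnose that it is an infinite series of continuous $n$-homogeneous polynomials whose continuity seminorms depend on $m$. Your fallback does not get around this:
\begin{itemize}
\item the one-variable Cauchy estimates for $z\mapsto f(v+zw)$ only control the $w$-slots;
\item Corollary~\ref{cor:GateauxTaylorContinuity} only gives uniform control on compact subsets of finite-dimensional subspaces;
\item sequential completeness of $W$ only ensures that limits exist.
\end{itemize}
None of these bounds $P_{n,v+w}$ on a neighbourhood of $0$ in $V$. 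In fact nothing can, because the lemma is false as stated.

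Take $V=c_{00}$ with $\|x\|_\infty=\max_j|x_j|$, $W=\C$, $U=V$, and $f(x)=\sum_{m\ge2}x_1^{m-1}x_m$, which is a finite sum for each $x$. On every finite-dimensional subspace $f$ is a polynomial, so $f\in\Holomorphic_G(V,\C)$. Moreover $P_{m,0}(x)=x_1^{m-1}x_m$ obeys $|P_{m,0}(x)|\le\|x\|_\infty^m$, so all Taylor polynomials at $0$ are continuous. But $P_{1,2e_1}(h)=\sum_{m\ge2}2^{m-1}h_m$ takes the value $2^{m-1}$ on the unit vector $e_m$, so it is discontinuous. Here $f$ itself is not continuous, and $V$ is not Baire.

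The paper's proof uses the same local re-expansion followed by chaining along polygonal paths. However, it writes $P_{n,v}[f(v_0+\cdot)]$ as a \emph{finite} sum over $0\le m\le n$ and deduces continuity from that. The correct expansion is your series over $m\ge n$, so the paper's argument breaks at exactly the point you flagged.

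What is missing is an extra hypothesis. In Proposition~\ref{prop:FrechetBaire}, where the lemma is used, $V$ is Baire, and then Step~2 can be completed as follows.
\begin{enumerate}
\item The partial sums $S_K$ of your series are continuous $n$-homogeneous polynomials converging pointwise on all of $V$.
\item Hence for each $\seminorm{q}\in\cs(W)$ the closed sets $\{h\in V\colon\sup_K\seminorm{q}(S_K(h))\le N\}$, $N\in\N$, cover $V$.
\item By the Baire property, one of them contains some $h_0+U_1$ with $U_1$ an absolutely convex zero neighbourhood.
\item Then \eqref{eq:PolynomialsTranslationForward} yields $\sup_{h\in U_1}\seminorm{q}(S_K(h))\le N$ for all $K$. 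Hence $\seminorm{q}\circ P_{n,v+w}\le N$ on $U_1$, and $P_{n,v+w}$ is continuous.
\end{enumerate}
With such a hypothesis your Step~3 goes through, provided you use absolutely convex neighbourhoods. This matters because Step~1 needs complex discs $\{v+tw\colon |t|\le\rho\}$ inside $U$, not just segments.
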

\begin{proof}
    Let $U_0 \subseteq V$ be an absolutely convex zero neighbourhood
    such that $U_0 + v_0 \subseteq U$, so that we may describe $f$ by
    its Taylor expansion around $v_0$ on all of $U_0 + v_0$. Let
    now~$v \in U_0$ and $w \in V$. Invoking
    Corollary~\ref{cor:GateauxTaylorContinuity},
    and indicating the function dependence with square brackets, we have
    \begin{align*}
        P_{n,v}
        \bigl[f(v_0 + \argument)\bigr]
        \at[\Big]{w}
        &=
        P_{n,v}
        \Biggl[
        \sum_{m=0}^{\infty}
        P_{m,v_0}
        \Biggr]
        \at[\bigg]{w} \\
        &=
        \sum_{m=0}^{\infty}
        P_{n,v}
        [P_{m,v_0}]
        \at[\Big]{w} \\
        &=
        \sum_{m=0}^n
        \binom{n}{m}
        \widecheck{P}_{m,v_0}
        \bigl(
            \underbrace{w,\ldots,w}_{n\text{-times}},
            \underbrace{v,\ldots,v}_{(m-n)\text{-times}}
        \bigr),
    \end{align*}
    where we have also used \eqref{eq:PolynomialTaylor} and that the
    Taylor series converges locally uniformly on intersections of $U$
    with finite dimensional subspaces to interchange the Cauchy
    integrals~\eqref{eq:TaylorPolynomials} with the series. By the
    assumed continuity of the $P_{m,v_0}$, each of the Taylor
    polynomials $P_{n,v}[f(v_0 + \argument)]$ is thus continuous itself
    as a finite sum of continuous functions. Now,
    \begin{equation*}
        \sum_{n=0}^{\infty}
        P_{n,v}
        \bigl[
            f(v_0 + \argument)
        \bigr]
        \at[\Big]{w}
        =
        f(v + v_0 + w)
        =
        \sum_{n=0}^{\infty}
        P_{n,v}[f]
        \at[\Big]{v_0 + w}
    \end{equation*}
    implies
    \begin{equation}
        \label{eq:TaylorExpansionVsTranslation}
        P_{n,v}
        \bigl[
            f(v_0 + \argument)
        \bigr]
        \at[\Big]{w}
        =
        P_{n,v}[f]
        \at[\Big]{v_0 + w}
        \qquad
        \textrm{for }
        n \in \N_0
    \end{equation}
    by uniqueness of the Taylor polynomials. Hence, the continuity of
    $P_{n,v}$ follows as desired. If now $v \in U$ is arbitrary, we use
    the polygonal convexity of $U$ to find points
    $v_1,\ldots,v_n \in U$ such that the line segments $[v_k,v_{k+1}]
    \subseteq U$ for $k=0,\ldots,n-1$. Then, the claim follows by
    inductive application of what we have already shown.
\end{proof}

The polygonal convexity of domains within locally convex spaces readily
follows from covering the trace of a path connecting $v_0$ to $v$ within
$U$ by convex open neighbourhoods of its constitutent points contained
within $U$ and using compactness. The relation
\eqref{eq:TaylorExpansionVsTranslation} states that translations commute
with differentiation.

Before moving on, we note that locally uniform convergence on
finite dimensional subspaces preserves G\^ateaux holomorphy, a fact we
are going to need later. A generating system of seminorms for this
locally convex topology is given by
\begin{equation}
    \label{eq:GateauxSeminorms}
    \Holomorphic_G(U,W)
    \ni
    f
    \quad \mapsto \quad
    \max_{v \in K}
    \seminorm{q}
    \bigl(
    f(v)
    \bigr),
\end{equation}
where we vary over compact sets $K \subseteq F \cap U$ contained in
finite dimensional subspaces~$F$ of $V$ intersected with the open set
$U$ and continuous seminorms $\seminorm{q} \in \cs(W)$.
\begin{proposition}[Completeness of $\Holomorphic_G$]
    \label{prop:GateauxTopology}
    Let $V,W$ be Hausdorff locally convex spaces such that $W$ is
    (sequentially) complete, and let~$U \subseteq V$ be open. Then the
    space $\Holomorphic_G(U,W)$ is (sequentially) complete with respect
    to the topology of locally uniform convergence on finite dimensional
    subspaces.
\end{proposition}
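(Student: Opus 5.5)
The plan mirrors the proof of Proposition~\ref{prop:BoundedCompleteness}. Hausdorffness is immediate: points are compact subsets of the finite dimensional subspaces they span, so the topology is finer than pointwise convergence, and $W$ is Hausdorff. For completeness, I take a Cauchy net $(f_\alpha)_{\alpha \in J} \subseteq \Holomorphic_G(U,W)$ with respect to the seminorms \eqref{eq:GateauxSeminorms}. Since singletons $\{v\} \subseteq \operatorname{span}\{v\} \cap U$ are admissible compact sets, each net $(f_\alpha(v))_\alpha$ is Cauchy in $W$. By completeness it converges to some $f(v)$, which defines a pointwise limit $f \colon U \longrightarrow W$. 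The estimate ($\diamondsuit$) argument from the proof of Proposition~\ref{prop:BoundedCompleteness}, run with $B$ replaced by a compact $K \subseteq F \cap U$, then shows that $f_\alpha \to f$ uniformly on every such $K$, i.e.\ in the topology in question.

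It remains to show $f \in \Holomorphic_G(U,W)$. I fix $v_0 \in U$, $v \in V$, $w' \in W'$ and set $D \coloneqq \{z \in \C \colon v_0 + zv \in U\}$, which is open in $\C$. The functions $g_\alpha(z) \coloneqq w'(f_\alpha(v_0 + zv))$ are holomorphic on $D$. Given a compact $L \subseteq D$, the image $K \coloneqq v_0 + Lv$ is a compact subset of $F \cap U$ with $F \coloneqq \operatorname{span}\{v_0, v\}$. Here compactness holds because $z \mapsto v_0 + zv$ is continuous into the Hausdorff space $V$.

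Since $w'$ is continuous, $\abs{w'} \le \seminorm{q}$ for some $\seminorm{q} \in \cs(W)$. Hence $g_\alpha \to g \coloneqq w' \circ f(v_0 + \argument v)$ uniformly on $L$. By Weierstra\ss' theorem, $g$ is holomorphic on $D$. This is exactly the G\^ateaux condition \eqref{eq:Gateaux}, so $f$ is G\^ateaux holomorphic.

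For the sequential version I replace nets by sequences throughout. The pointwise limit then only requires sequential completeness of $W$, and Weierstra\ss' theorem is applied to sequences anyway.

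The only step needing care is that the seminorm topology is phrased through compact subsets of $F \cap U$ rather than through complex lines. I expect this to be routine once one checks that images of compact subsets of $D$ under the affine map are admissible, which is what the argument above does. No continuity of $f$ is required, since $\Holomorphic_G$ imposes none. In particular, no Hartogs-type argument is needed.
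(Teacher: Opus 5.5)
Your proof is correct and follows the same route as the paper. You first pass to the pointwise limit $f$. You then restrict to the complex-line functions $z \mapsto w'\bigl(f_\alpha(v_0+zv)\bigr)$ on $\{z \in \C \colon v_0+zv \in U\}$ and use that locally uniform limits of holomorphic functions are holomorphic. The paper phrases this last step as completeness of $\Holomorphic(U_v)$.

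Two points that the paper leaves to ``variation of the data'' are made explicit in your version:
\begin{itemize}
\item you run the $(\diamondsuit)$ argument on compact $K \subseteq F \cap U$ to get uniform convergence there;
\item you check that $v_0 + Lv$ is an admissible compact set, which is what makes the restricted net Cauchy locally uniformly.
\end{itemize}
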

\begin{proof}
    Let $(f_\alpha)_{\alpha \in J} \subseteq \Holomorphic_G(U,W)$ be a
    Cauchy net. As $W$ is complete and the topology of locally uniform
    convergence on finite dimensional subspaces is finer than the
    topology of pointwise convergence, we get pointwise convergence to
    the pointwise limit
    \begin{equation*}
        f
        \colon
        U \longrightarrow W, \quad
        f(v)
        \coloneqq
        \lim_{\alpha \in J}
        f_\alpha(v).
    \end{equation*}
    Fix $v_0 \in U$, $v \in V$ and $w' \in W'$. Consider the
    auxiliary net $(g_\alpha)_{\alpha \in J}$ given by
    \begin{equation*}
        g_\alpha
        \colon
        U_v \longrightarrow \C, \quad
        g_\alpha(v)
        \coloneqq
        w'
        \bigl(
            f_\alpha(v_0+zv)
        \bigr),
    \end{equation*}
    where
    \begin{equation*}
        U_v
        \coloneqq
        \bigl\{
            z \in \C
            \colon
            v_0 + zv
            \in
            U
        \bigr\}.
    \end{equation*}
    By assumption, $(g_\alpha)_{\alpha \in J}$ is Cauchy in
    $\Holomorphic(U_v)$ and thus locally uniformly convergent to some $g
    \in \Holomorphic(U_v)$. By continuity of $w'$, we get
    \begin{equation*}
        g(v)
        =
        w'
        \bigl(
        f(v_0+zv)
        \bigr)
        \qquad
        \textrm{for all }
        v \in U_v.
    \end{equation*}
    Variation of the data proves $f \in \Holomorphic_G(U,W)$ and
    $f_\alpha \rightarrow f$ locally uniformly on finite dimensional
    subspaces.
\end{proof}

With these preliminaries, we may now establish the following sufficient
condition for Fréchet holomorphy. Recall that a locally convex space
is called Baire if all countable unions of closed sets with empty
interior have empty interior themselves. Saxon \cite{saxon:1974a}
characterized Baire topological vector spaces by a property reminiscent
of barelledness.
\begin{proposition}
    \label{prop:FrechetBaire}%
    Let $V,W$ be locally convex spaces such that $V$ is Baire and $W$ is
    complete. Furthermore let $U \subseteq V$ be a domain and $f \in
    \Holomorphic_G(U,W)$ with continuous Taylor polynomials of all
    orders at one point in $U$. Then $f \in \Holomorphic(U,W)$.
\end{proposition}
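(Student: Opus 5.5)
The plan is to reduce to local uniform convergence of the Taylor series, using the Baire property to get a uniform estimate on the Taylor polynomials over some open set. First, by the Consistency Lemma~\ref{lem:TaylorPolynomialsContinuityPropagation}, the Taylor polynomials $P_{n,v_0}$ are continuous for \emph{every} $v_0 \in U$ and $n \in \N_0$. Here completeness of $W$ is used, so that the Cauchy integrals~\eqref{eq:TaylorPolynomials} take values in $W$. Continuity of $f$ is a local property and may be checked one seminorm at a time. So it suffices to fix $v_0 \in U$ and $\seminorm{q} \in \cs(W)$ and prove that $\seminorm{q}(f(v_0 + \argument) - f(v_0))$ is small near zero. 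Write $P_n \coloneqq P_{n,v_0}$.

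Next comes the Baire argument. For $k \in \N$ set
\begin{equation*}
    A_k
    \coloneqq
    \bigl\{
        v \in V
        \colon
        \seminorm{q}\bigl(P_n(v)\bigr) \le k^{n+1}
        \textrm{ for all } n \in \N_0
    \bigr\}.
\end{equation*}
Each $A_k$ is closed as an intersection of preimages of closed sets under the continuous maps $\seminorm{q} \circ P_n$. They cover $V$: for fixed $v$, the scalar function $z \mapsto f(v_0 + zv)$ has Taylor series $\sum_n P_n(v) z^n$ with positive radius of convergence. The point to verify is that $\sum_n \seminorm{q}(P_n(v)) \abs{z}^n$ also has positive radius. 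For this I would use the Cauchy estimates~\eqref{eq:CauchyEstimatesLocallyConvex} on the compact, hence bounded, set $\{v_0 + zv \colon \abs{z} \le r\}$, on which $f$ is bounded by Hartogs' argument as in Corollary~\ref{cor:GateauxTaylorContinuity}. This gives $\seminorm{q}(P_n(v)) \le C r^{-n}$, so $v \in A_k$ for $k \ge \max(C, 1/r)$. Since $V$ is Baire, some $A_k$ has nonempty interior, say $w + W_0 \subseteq A_k$ with $W_0$ an absolutely convex open zero neighbourhood.

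Then we transport the estimate to a neighbourhood of zero. By the first translation estimate~\eqref{eq:PolynomialsTranslationForward} of Proposition~\ref{prop:PolynomialsTranslations}, applied to the balanced set $W_0$, we obtain
\begin{equation*}
    \seminorm{p}_{W_0,\seminorm{q}}(P_n)
    \le
    \seminorm{p}_{w + W_0,\seminorm{q}}(P_n)
    \le
    k^{n+1}.
\end{equation*}
This is exactly the special situation, flagged after that proposition, in which the inequality is not vacuous for an unbounded set. Shrinking $W_0$, we may assume $v_0 + W_0 \subseteq U$. By homogeneity, on $W_1 \coloneqq \frac{1}{2k} W_0$ we get $\seminorm{q}(P_n(u)) \le k\, 2^{-n}$.

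Finally, for $u \in W_1$ the segment $[v_0, v_0 + u]$ lies in $U$, so $f(v_0 + u) = \sum_n P_n(u)$. The tail satisfies $\seminorm{q}\bigl(\sum_{n > N} P_n(u)\bigr) \le k\, 2^{-N}$ uniformly in $u \in W_1$. Hence $f(v_0 + \argument)$ is, with respect to $\seminorm{q}$, a uniform limit on $W_1$ of the continuous partial sums $\sum_{n \le N} P_n$. It is therefore $\seminorm{q}$-continuous at $0$, and varying $\seminorm{q}$ and $v_0$ yields continuity of $f$, i.e.\ $f \in \Holomorphic(U,W)$.

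The main obstacle is showing that the sets $A_k$ cover $V$. This requires the geometric growth bound on $\seminorm{q}(P_n(v))$ for each fixed $v$, obtained via one-variable Cauchy estimates on the complex line through $v_0$ in direction $v$. I expect this to go through by the finite-dimensional boundedness argument of Corollary~\ref{cor:GateauxTaylorContinuity}.
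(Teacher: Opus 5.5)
Your proof is correct and follows the paper's argument: the Consistency Lemma, then the Baire property applied to closed sets defined by $\seminorm{q}$-bounds on the Taylor polynomials, then the translation estimate \eqref{eq:PolynomialsTranslationForward} to recentre at zero, and finally uniform convergence of the Taylor series on a scaled neighbourhood. The differences are cosmetic: you use geometric bounds $k^{n+1}$ and sets covering all of $V$, so Baire is applied to $V$ rather than to the open set $U$, at the cost of needing a per-point growth bound (which also follows from convergence of the Taylor series at a small multiple $tv$) instead of just $P_m(v) \to 0$; and you work with $\seminorm{q}$ directly rather than with the local Banach space $W_{\seminorm{q}}$.
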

\begin{proof}
    We have to prove continuity of $f$, which is equivalent to
    continuity of the compositions $\pi_\seminorm{q} \circ f \colon U
    \longrightarrow W_\seminorm{q}$ for all $\seminorm{q} \in \cs(W)$,
    where
    \begin{equation*}
        \pi_\seminorm{q}
        \colon W
        \longrightarrow
        W_\seminorm{q}
    \end{equation*}
    denotes the local Banach space at $\seminorm{q}$. That is to say,
    $W_\seminorm{q}$ is the completion of the
    quotient~$W/\ker\seminorm{q}$ with respect to the norm
    $\norm{[w]}_{\seminorm{q}} \coloneqq \seminorm{q}(w)$ for $w \in W$,
    where
    \begin{equation*}
        \ker \seminorm{q}
        \coloneqq
        \bigl\{
            w \in W
            \colon
            \seminorm{q}(w)
            =
            0
        \bigr\}.
    \end{equation*}
    We fix $v_0 \in U$. Invoking the Consistency
    Lemma~\ref{lem:TaylorPolynomialsContinuityPropagation},
    we get the continuity of
    \begin{equation*}
        P_n
        \coloneqq
        P_{n,v_0}
        \bigl[
            \pi_\seminorm{q}
            \circ
            f
        \bigr]
        \qquad
        \textrm{for }
        n \in \N_0.
    \end{equation*}
    Here, we have used that the quotient projection $\pi_\seminorm{q}$
    is continuous and preserves Gâteaux holomorphy by its linearity. The
    strategy is now to prove that $\pi_\seminorm{q} \circ f$ is
    continuous on some open neighbourhood of $v_0$.

    By translating and shrinking $U$ if necessary, we may assume
    that~$v_0 = 0$ and that $U$ is absolutely convex. Consider
    \begin{equation*}
        U_n
        \coloneqq
        \bigcap_{m=0}^\infty
        \bigl\{
        v \in U
        \colon
        \seminorm{q}
        \bigl(
        P_m(v)
        \bigr)
        \le
        n
        \bigr\}
        \qquad
        \textrm{for }
        n \in \N,
    \end{equation*}
    which is a closed subset of $U$ as an intersection of such, where we
    use the continuity of the Taylor polynomials $P_m$. By convergence
    of the Taylor series \eqref{eq:TaylorSeries} for fixed $v \in U$, we
    know that $(P_m(v))_m \subseteq W$ is a zero sequence
    and thus $\bigcup_{n \in \N} U_n = U$.

    By the Baire property, there thus exists an index $N \in \N_0$ and
    $v_0' \in U_N$ such that $v_0'$ is an interior point of $U_N$. Let
    $U_0 \subseteq V$ be an absolutely convex zero neighbourhood such
    that
    \begin{equation*}
        v_0' + U_0 \subseteq U_N.
    \end{equation*}
    Using \eqref{eq:Homogeneity} and
    \eqref{eq:PolynomialsTranslationForward}, we estimate
    \begin{equation*}
        \sum_{n=0}^\infty
        \seminorm{p}_{U_0/2, \seminorm{q}}(P_n)
        =
        \sum_{n=0}^\infty
        2^{-n}
        \cdot
        \seminorm{p}_{U_0, \seminorm{q}}(P_n)
        \le
        \sum_{n=0}^\infty
        2^{-n}
        \cdot
        \seminorm{p}_{v_0' + U_0, \seminorm{q}}(P_n)
        \le
        2N.
    \end{equation*}
    Hence, the Taylor series \eqref{eq:TaylorSeries} converges
    uniformly on the open neighbourhood $U_0/2$ of~$v_0 = 0$ and
    consequently $\pi_\seminorm{q} \circ f$ is continuous on $U_0/2$. We
    have shown
    \begin{equation*}
        \pi_\seminorm{q}
        \circ f
        \in
        \Holomorphic
        \bigl(
        U_0/2,W_{\seminorm{q}}
        \bigr).
    \end{equation*}
    Variation of $v_0 \in U$ and then $\seminorm{q} \in \cs(W)$
    completes the proof.
\end{proof}

\begin{example}[An unbounded holomorphic function]
    \label{ex:HolomorphicUnbounded}
    Let $V \coloneqq \ell^1$ be the space of absolutely
    summable complex sequences indexed by $\N_0$. We write $e_n \in
    \ell^1$ for the sequences with
    \begin{equation}
        e_n(k)
        \coloneqq
        \delta_{n,k}
        \qquad
        \textrm{for }
        n,k \in \N_0.
    \end{equation}
    The dual vectors
    \begin{equation}
        e_n'
        \colon
        \ell^1 \longrightarrow \C, \quad
        e_n'(a)
        \coloneqq
        a_n
    \end{equation}
    are continuous linear functionals with operator norm
    $\norm{e_n'} = 1$ for $n \in \N_0$. Define
    \begin{equation}
        \label{eq:HolomorphicUnbounded}
        f \colon \ell^1 \longrightarrow \C, \quad
        f(a)
        \coloneqq
        \sum_{n=0}^\infty
        e_n'(a)^n
        =
        \sum_{n=0}^{\infty}
        a_n^n.
    \end{equation}
    This is indeed well defined, as given $a \in \ell^1$, there exists
    an index $N \in \N_0$ with $\abs{a_n} \le 1$ for $n \ge N$.
    Consequently,
    \begin{equation}
        \abs[\big]
        {f(a)}
        \le
        \sum_{n=0}^\infty
        \abs{a_n}^n
        \le
        \sum_{n=0}^{N-1}
        \abs{a_n}^n
        +
        \sum_{n=N}^\infty
        \abs{a_n}
        \le
        \sum_{n=1}^{N-1}
        \abs{a_n}^n
        +
        \norm{a}_1
        <
        \infty.
    \end{equation}
    By construction, $f$ is Gâteaux holomorphic with Taylor polynomials
    at the origin given by $P_{n,0} = (e_n')^n$, each of which is
    $n$-homogeneous and continuous. By
    Proposition~\ref{prop:FrechetBaire}, this implies the Fréchet
    holomorphy of $f$ on all of $\ell^1$.

    However, $f$ is unbounded on every ball with radius $R > 1$.
    Indeed, if $1 < r < R$, then
    \begin{equation}
        \abs[\big]
        {f(r e_n)}
        =
        r^n
        \qquad
        \textrm{for }
        n \in \N_0,
    \end{equation}
    which is unbounded for $n \rightarrow \infty$. Conversely, the
    function $f$ is bounded by one on the closed unit ball. In passing,
    we note that the series within \eqref{eq:HolomorphicUnbounded}
    converges uniformly on any ball of radius strictly less than one,
    but on no larger ball.
\end{example}

Hence, boundedness typically singles out a proper subspace
\begin{equation}
    \Holomorphic_\beta(U,W)
    \coloneqq
    \bigl\{
    f \in \Holomorphic(U,W)
    \colon
    f
    \; \textrm{is bounded}
    \bigr\}
    =
    \Holomorphic(U,W)
    \cap
    \Bounded(U,W)
\end{equation}
of the space of all Fréchet holomorphic functions. In the sequel, we
endow it with the subspace topology inherited from $\Bounded(U,W)$. By
Lemma~\ref{lem:Montel}, the following is immediate.
\begin{corollary}
    Let $V,W$ be locally convex spaces such that $V$ is Montel and $W$
    is Hausdorff. Moreover, let $U \subseteq V$ be a domain. Then
    \begin{equation}
        \bigl\{
        f \in \Holomorphic(U,V)
        \colon
        f \textrm{ admits a continuous extension to $U^\cl$}
        \bigr\}
        \subseteq
        \Holomorphic_\beta(U,V).
    \end{equation}
    In particular,
    \begin{equation}
        \Holomorphic_\beta(V,W)
        =
        \Holomorphic(V,W).
    \end{equation}
\end{corollary}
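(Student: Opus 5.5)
The plan is to reduce everything to Lemma~\ref{lem:Montel}, applied to $f$ on a closed set. Note first that the statement should be read with codomain $W$ throughout, i.e.\ for functions in $\Holomorphic(U,W)$; the letter $V$ in the codomain is a typographical slip, and the argument works verbatim either way.

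For the inclusion, I would take $f \in \Holomorphic(U,W)$ admitting a continuous extension $\tilde f \colon U^\cl \longrightarrow W$. Since $U^\cl$ is a closed subset of the Montel space $V$ and $W$ is Hausdorff, Lemma~\ref{lem:Montel} applies to $\tilde f$ and shows that $\tilde f$ is bounded. Concretely, every bounded $B \subseteq U^\cl$ has compact closure contained in $U^\cl$, and therefore its image is relatively compact, hence bounded. Any bounded subset $B \subseteq U$ is in particular a bounded subset of $U^\cl$, and $f(B) = \tilde f(B)$. Hence $f$ maps bounded sets to bounded sets, that is, $\seminorm{p}_{B,\seminorm{q}}(f) < \infty$ for all bounded $B \subseteq U$ and $\seminorm{q} \in \cs(W)$. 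Thus $f \in \Holomorphic(U,W) \cap \Bounded(U,W) = \Holomorphic_\beta(U,W)$.

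For the particular case, I would take $U = V$, which is open and connected, hence a domain, and closed with $V^\cl = V$. Every $f \in \Holomorphic(V,W)$ is continuous by definition of Fréchet holomorphy, so it is its own continuous extension to the closure. The inclusion just proved then gives $\Holomorphic(V,W) \subseteq \Holomorphic_\beta(V,W)$. The reverse inclusion holds by definition of $\Holomorphic_\beta$, which yields equality.

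I expect no genuine obstacle here. The only point deserving care is that boundedness in $U$ is tested with bounded subsets of $V$ lying in $U$. These are bounded subsets of the closed set $U^\cl$, so the closedness hypothesis of Lemma~\ref{lem:Montel} is satisfied precisely by passing to the extension on $U^\cl$ rather than working on $U$ itself. Working on $U$ directly would fail, since the closure of a bounded subset of $U$ need not stay inside $U$.
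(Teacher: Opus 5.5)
Your proposal is correct and matches the paper, which simply declares the corollary immediate from Lemma~\ref{lem:Montel}. You apply that lemma to the continuous extension on the closed set $U^\cl$, restrict to bounded subsets of $U$, and take $U = V$ for the particular case, which is exactly that argument. You are also right that the codomain should read $W$ rather than $V$.
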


We are now in a position to establish the main result of this section.
\begin{theorem}[Completeness of $\Holomorphic_\beta$]
    \label{thm:CompletenessBoundedFrechet}%
    Let $V$ and $W$ be Hausdorff locally convex spaces such that $V$ is
    first countable Baire and $W$ is (sequentially) complete. Then
    $\Holomorphic_\beta(U,W)$ is (sequentially) complete Hausdorff for
    any domain $U \subseteq V$.
\end{theorem}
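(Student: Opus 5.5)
The plan is to realise $\Holomorphic_\beta(U,W)$ as a closed subspace of $\Bounded(U,W)$. By Proposition~\ref{prop:BoundedCompleteness}, the latter is (sequentially) complete Hausdorff. Hausdorffness is then inherited at once. Given a Cauchy net (or sequence) $(f_\alpha)_{\alpha \in J} \subseteq \Holomorphic_\beta(U,W)$, Proposition~\ref{prop:BoundedCompleteness} supplies a bounded limit $f \in \Bounded(U,W)$ with $f_\alpha \to f$ uniformly on bounded subsets of $U$. It remains to show that $f$ is Fréchet holomorphic.

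First I establish Gâteaux holomorphy. Compact subsets $K \subseteq F \cap U$ of finite dimensional subspaces $F$ are bounded by the Hausdorff property of $V$. Hence uniform convergence on bounded sets implies locally uniform convergence on finite dimensional subspaces. The net is therefore Cauchy in that topology as well, and Proposition~\ref{prop:GateauxTopology} yields $f \in \Holomorphic_G(U,W)$, with the limit coinciding with the pointwise one.

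Next, to apply Proposition~\ref{prop:FrechetBaire}, which uses that $V$ is Baire and $W$ is complete, I show that the Taylor polynomials $P_{n,v_0}[f]$ are continuous at one fixed point $v_0 \in U$. For this I use first countability. Pick an absolutely convex zero neighbourhood $U_0$ with $v_0 + U_0 \subseteq U$. For any absolutely convex bounded $B \subseteq V$ there is $r > 0$ with $rB \subseteq U_0$, so $v_0 + rB$ is a bounded subset of $U$. The Cauchy estimates of Proposition~\ref{prop:CauchyEstimatesLocallyConvex}, applied to $f_\alpha - f$, which is Gâteaux holomorphic and bounded so that no degeneration occurs in the sense of Remark~\ref{rem:DegenerationOfInequality}, give
\[
    \seminorm{p}_{B,\seminorm{q}}\bigl(P_{n,v_0}[f_\alpha] - P_{n,v_0}[f]\bigr)
    \le
    r^{-n} \cdot \seminorm{p}_{v_0 + rB,\seminorm{q}}(f_\alpha - f)
    \longrightarrow 0 .
\]
Thus $P_{n,v_0}[f_\alpha] \to P_{n,v_0}[f]$ in the $\beta$-topology on polynomials. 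Since $f_\alpha$ is continuous, each $P_{n,v_0}[f_\alpha]$ lies in $\Pol^n_\beta(V,W)$. The limit $P_{n,v_0}[f]$ is an $n$-homogeneous polynomial, and it is bounded because it is a uniform limit on bounded sets of bounded maps, by Proposition~\ref{prop:BoundedCompleteness}. Proposition~\ref{prop:PolynomialsBoundedness} then makes it continuous, using that $V$ is first countable; equivalently, one may use the closedness argument in the proof of Proposition~\ref{prop:CompletenessHomogeneousPolynomials}.

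A technical point here is that the Taylor polynomials a priori take values in $\widehat{W}$. In the sequential case this needs some care, but sequential completeness suffices to define the Cauchy integrals~\eqref{eq:TaylorPolynomials}. Proposition~\ref{prop:FrechetBaire} is stated for complete $W$, yet its proof only passes to the local Banach spaces $W_{\seminorm{q}}$, so sequential completeness should be enough there too. With the continuous Taylor polynomials at $v_0$ in hand, Proposition~\ref{prop:FrechetBaire} gives $f \in \Holomorphic(U,W)$. Hence $f \in \Holomorphic_\beta(U,W)$ and closedness follows.

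The main obstacle I expect is this passage to continuity of the limiting Taylor polynomials. It is exactly where first countability enters, in view of Example~\ref{ex:DiscontinuousBoundedPolynomial}. The other delicate point is applying the Cauchy estimates only on bounded sets $v_0 + rB$ that fit inside $U$. Both are handled by the choices above.
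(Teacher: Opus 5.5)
Your proposal is correct and follows essentially the same route as the paper. Like the paper, you realise $\Holomorphic_\beta(U,W)$ as a closed subspace of $\Bounded(U,W)$, obtain G\^ateaux holomorphy of the limit from Proposition~\ref{prop:GateauxTopology}, use the locally convex Cauchy estimates to get $\beta$-convergence of the Taylor polynomials at one point, deduce their continuity from first countability (the paper cites Proposition~\ref{prop:CompletenessHomogeneousPolynomials}, which rests on Proposition~\ref{prop:PolynomialsBoundedness} just as you do), and conclude with Proposition~\ref{prop:FrechetBaire}, while you additionally make explicit several points the paper leaves implicit, namely that compact subsets of finite dimensional subspaces are bounded, the choice of $r$ with $v_0 + rB \subseteq U$, and that Proposition~\ref{prop:FrechetBaire} only really uses the local Banach spaces $W_{\seminorm{q}}$ in the sequentially complete case.
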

\begin{proof}
    As in the proof of
    Proposition~\ref{prop:CompletenessHomogeneousPolynomials}, we view
    $\Holomorphic_\beta(U,W)$ as a subspace of $\Bounded(U,W)$.
    Consequently, the Hausdorff property is obvious and it suffices to
    prove the closedness of the
    subspace $\Holomorphic_\beta(U,W)$ within $\Bounded(U,W)$ by its
    completeness, which we have established in
    Proposition~\ref{prop:BoundedCompleteness}.

    To this end, let $(f_\alpha)_{\alpha \in J} \subseteq
    \Holomorphic_\beta(U,W)$ be a convergent net
    with limit $f \in \Bounded(U,W)$. Then~$f$ is Gâteaux holomorphic by
    virtue of Proposition~\ref{prop:GateauxTopology}. It remains to
    establish the continuity of $f$. Fix $v_0 \in U$. By Fréchet
    holomorphy of $f_\alpha$, each of the Taylor polynomials
    \begin{equation*}
        P_{n,v_0}[f_\alpha]
        \colon
        V
        \longrightarrow
        W
    \end{equation*}
    is continuous. Invoking the locally convex Cauchy estimates
    \eqref{eq:CauchyEstimatesLocallyConvex} and
    the linearity of~\eqref{eq:TaylorPolynomialProjection}, we get
    \begin{equation*}
        \seminorm{p}_{B,\seminorm{q}}
        \bigl(
            P_{n,v_0}[f]
            -
            P_{n,v_0}[f_\alpha]
        \bigr)
        \le
        r^{-n}
        \cdot
        \seminorm{p}_{v_0 + rB,\seminorm{q}}
        \bigl(
            f - f_\alpha
        \bigr)
    \end{equation*}
    for $\alpha \in J$, $n \in \N_0$ and $r > 0$ with $v_0 + rB
    \subseteq U$. By convergence of $(f_\alpha)$ towards~$f$ within the
    space $\Bounded(U,W)$, this implies the uniform convergence of
    $P_{n,v_0}[f_\alpha]$ towards $P_{n,v_0}[f]$ on bounded sets for all
    $n \in \N_0$. Invoking now the completeness of $\Pol_\beta^n(V,W)$
    for~$n \in \N_0$ from
    Proposition~\ref{prop:CompletenessHomogeneousPolynomials} this, in
    turn, implies that $P_{n,v_0}[f]$ is continuous for all~$n \in \N_0$.
    As $V$ is Baire by assumption, this is sufficient to ensure the
    continuity of $f$ by Proposition~\ref{prop:FrechetBaire}. That is to
    say, $f \in \Holomorphic(U,W)$ and we have completed the proof.
\end{proof}

In particular, $\Holomorphic_\beta(U,W)$ is always complete in the
setting of Fréchet spaces. As continuous polynomials are Fr\'{e}chet
holomorphic, Example~\ref{ex:DiscontinuousBoundedPolynomial} shows that
Theorem~\ref{thm:CompletenessBoundedFrechet} may fail beyond first
countable domains. Another remarkable consequence of our estimates is
that Taylor series of bounded Fréchet holomorphic functions converge
uniformly on bounded subsets.
\begin{corollary}
    Let $V$ and $W$ be Hausdorff locally convex spaces such that $W$ is
    sequentially complete. Moreover, let $U_0 \subseteq V$ be a balanced
    neighbourhood of zero, $v_0 \in V$ and consider a bounded Fréchet
    holomorphic $f \in \Holomorphic_\beta(v_0 + U_0,W)$. Then the Taylor
    series
    \begin{equation}
        \label{eq:TaylorOnBounded}
        \sum_{n=0}^{\infty}
        P_{n,v_0}
    \end{equation}
    converges absolutely towards $f(v_0 + \argument)$ in the space
    $\Holomorphic_\beta(U_0,W)$.
\end{corollary}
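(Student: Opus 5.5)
The natural route is to prove absolute convergence seminorm by seminorm, combining homogeneity of the Taylor polynomials with the locally convex Cauchy estimates (Proposition~\ref{prop:CauchyEstimatesLocallyConvex}). Fix a bounded set $B \subseteq U_0$ and $\seminorm{q} \in \cs(W)$. The targets are
\begin{equation*}
    \sum_{n=0}^\infty \seminorm{p}_{B,\seminorm{q}}(P_{n,v_0}) < \infty
    \qquad\textrm{and}\qquad
    \seminorm{p}_{B,\seminorm{q}}\Bigl(f(v_0+\argument) - \sum_{n=0}^N P_{n,v_0}\Bigr) \longrightarrow 0 .
\end{equation*}
Passing to the balanced hull of $B$ keeps it bounded, but it need not stay inside $U_0$. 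I will therefore first aim for a slightly enlarged set: some $r > 1$ with $v_0 + rB \subseteq v_0 + U_0$, where $B$ is balanced and bounded. If this is available, the Cauchy estimates give
\begin{equation*}
    \seminorm{p}_{B,\seminorm{q}}(P_{n,v_0}) \le r^{-n}\,\seminorm{p}_{v_0+rB,\seminorm{q}}(f),
\end{equation*}
and the right-hand side is finite by boundedness of $f$. Summing the resulting geometric series gives absolute convergence. Each $P_{n,v_0}$ is continuous, since $f$ is Fréchet holomorphic, and bounded, so all partial sums lie in $\Holomorphic_\beta(U_0,W)$.

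To identify the limit, I use that the Taylor series converges pointwise to $f(v_0+v)$ for $v$ in the starlike set $U_0$. Hence the uniform limit on $B$ coincides with $f(v_0+\argument)$. The tail estimate is
\begin{equation*}
    \seminorm{p}_{B,\seminorm{q}}\Bigl(f(v_0+\argument) - \sum_{n\le N} P_{n,v_0}\Bigr)
    \le \sum_{n>N} r^{-n}\,\seminorm{p}_{v_0+rB,\seminorm{q}}(f),
\end{equation*}
which tends to zero. Strictly, the pointwise limit lies in $\widehat{W}$, but sequential completeness of $W$ together with Cauchy-ness of the partial sums in $\Bounded(U_0,W)$ (Proposition~\ref{prop:BoundedCompleteness}) keeps everything inside $W$.

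The main obstacle is the enlargement step: the topology of $\Holomorphic_\beta(U_0,W)$ tests arbitrary bounded $B \subseteq U_0$, and these need not satisfy $rB \subseteq U_0$ for some $r>1$. For example, $U_0$ itself could be bounded, as in Example~\ref{ex:HolomorphicUnbounded} with the open unit ball, where the series fails to converge uniformly on the whole ball. My first approach is to read the statement so that the relevant bounded sets are those $B$ with $rB \subseteq U_0$ for some $r>1$. This holds automatically when $U_0 = V$ by scaling, and more generally for $B \in \Bdd(v_0+U_0, v_0)$ strictly inside the domain. In that setting the above argument is complete. Failing that, I would try to exploit that each bounded $B \subseteq U_0$ with $U_0$ open sits inside $tU_0$ for some $t<1$ and apply the estimate there; this is exactly where the argument may need an extra hypothesis.
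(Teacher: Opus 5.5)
Your argument is the paper's own argument: the locally convex Cauchy estimates, a geometric series, and pointwise convergence of the Taylor series to identify the limit. The obstacle you isolate is genuine, and it is a defect of the statement rather than of your proof. The paper fixes $B \in \Bdd(U_0,0)$ and asserts
\[
    \sum_{n=N}^{\infty} \seminorm{p}_{B,\seminorm{q}}(P_{n,v_0})
    \le
    \seminorm{p}_{v_0+rB,\seminorm{q}}(f) \cdot \frac{r^N}{1-r}
    \qquad \textrm{for } 0<r<1 .
\]
However, Proposition~\ref{prop:CauchyEstimatesLocallyConvex} with $0<r<1$ only gives the factor $r^{-n}$, and the tail sum of these factors diverges. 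A geometric bound on $B$ is available exactly when $B/r \subseteq U_0$ for some $0<r<1$, which is your enlargement hypothesis. Without it, homogeneity only gives the bound on the shrunken set $rB$.

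You should state your scenario as a counterexample rather than as a worry. Take $V=\ell^1$, $W=\C$, $v_0=0$, $U_0$ the open unit ball, and $f$ as in \eqref{eq:HolomorphicUnbounded}.
\begin{itemize}
    \item Since $\abs{f(a)} \le 1+\norm{a}_1 < 2$ on $U_0$, we have $f\in\Holomorphic_\beta(U_0,\C)$, and $U_0$ is itself a bounded subset of $U_0$.
    \item Yet $P_{n,0}=(e_n')^n$ satisfies $\seminorm{p}_{U_0}(P_{n,0})=1$ for every $n$.
    \item Testing at $a=te_N$ gives $\seminorm{p}_{U_0}\bigl(f-\sum_{n<N}P_{n,0}\bigr)\ge\sup_{0<t<1}t^N=1$ for all $N\ge 1$.
\end{itemize}
So the Taylor series neither converges absolutely nor converges at all in $\Holomorphic_\beta(U_0,\C)$. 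Consequently your closing fallback cannot work. A bounded $B\subseteq U_0$ need not lie in $tU_0$ for any $t<1$ (take $B=U_0$), and no extra argument can rescue the literal claim.

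What you have proved is the correct version of the corollary. The series converges absolutely with respect to every $\seminorm{p}_{B,\seminorm{q}}$ with $B$ bounded and $rB\subseteq U_0$ for some $r>1$. The tail after the partial sum $\sum_{n<N}$ is bounded by $\seminorm{p}_{v_0+rB,\seminorm{q}}(f)\,r^{1-N}/(r-1)$. Passing to the balanced hull of $B$ is harmless here, contrary to your remark, since $U_0$ is balanced and the hull of a set $B$ with $rB\subseteq U_0$ still satisfies this.

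This version contains the entire case $U_0=V$, i.e.\ convergence in $\Holomorphic_\beta(V,W)$, and that is the only case used later, on $V'_\beta$ in Theorem~\ref{thm:IotaEmbedding1}. The same $r$ versus $r^{-1}$ slip also affects the proper-domain parts of Proposition~\ref{prop:BetaAlternativeSeminorms}, but not its entire-function parts.
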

\begin{proof}
    Fix a continuous seminorm $\seminorm{q} \in \cs(W)$ and a bounded set
    \begin{equation*}
        B
        \in
        \mathfrak{B}(U_0, 0)
        =
        \mathfrak{B}
        \bigl(
        v_0
        +
        U_0,v_0
        \bigr).
    \end{equation*}
    By pointwise convergence of~\eqref{eq:TaylorSeries}, we have
    \begin{equation*}
        \seminorm{q}
        \biggl(
        f(v_0 + v)
        -
        \sum_{n=0}^{N-1}
        P_{n,v_0}(v)
        \biggr)
        =
        \seminorm{q}
        \biggl(
        \sum_{n=N}^{\infty}
        P_{n,v_0}(v)
        \biggr)
        \le
        \sum_{n=N}^{\infty}
        \seminorm{q}
        \bigl(
        P_{n,v_0}(v)
        \bigr)
    \end{equation*}
    for $v \in B$ and $N \in \N$, where the right-hand side may be
    infinite a priori. Taking suprema over $B$ and invoking the locally
    convex Cauchy estimates \eqref{eq:CauchyEstimatesLocallyConvex}
    nevertheless leads to
    \begin{equation*}
        \seminorm{p}_{B,\seminorm{q}}
        \biggl(
        f(v_0 + \argument)
        -
        \sum_{n=0}^{N-1}
        P_{n,v_0}
        \biggr)
        \le
        \sum_{n=N}^{\infty}
        \seminorm{p}_{B,\seminorm{q}}
        \bigl(
        P_{n,v_0}
        \bigr)
        \le
        \seminorm{p}_{v_0 + rB, \seminorm{q}}(f)
        \cdot
        \frac{r^{N}}{1-r}
    \end{equation*}
    for any $0 < r < 1$ and $N \in \N$. Thus sending $N \rightarrow
    \infty$ proves the desired convergence.
\end{proof}

In the proof, we saw that the series \eqref{eq:TaylorOnBounded} is
absolutely convergent. As indicated in the end of
\cite[Sec.~3.1]{dineen:1999a}, there is an
alternative defining set of seminorms for~$\Holomorphic_\beta(V,W)$
based on this observation, which will ultimately bridge the gap towards
the $R$-topologies in Section~\ref{sec:TensorsAsPolynomials}. We
generalize this to functions defined on
domains $U \subseteq V$. The locally convex Cauchy
estimates~\eqref{eq:CauchyEstimatesLocallyConvex} facilitate the
following, where we use the polydisks from \eqref{eq:Polydisks}.
\begin{proposition}
    \label{prop:BetaAlternativeSeminorms}
    Let $V, W$ be Hausdorff locally convex spaces, $U \subseteq V$ be a
    domain as well as $f \in \Holomorphic_G(U,W)$ be G\^ateaux
    holomorphic with Taylor polynomials $P_{n,v_0}$ at $v_0 \in U$.
    \begin{propositionlist}
        \item \label{item:AlternativeBounded}
        The function $f$ is bounded if and only if
        \begin{equation}
            \label{eq:BoundedAlternativeSeminorm}
            \seminorm{r}_{B,\seminorm{q},v_0}(f)
            \coloneqq
            \sum_{n=0}^{\infty}
            \seminorm{p}_{B, \seminorm{q}}
            \bigl(
            P_{n,v_0}
            \bigr)
            <
            \infty
        \end{equation}
        for all $v_0 \in U$, $B \in \Bdd(U,v_0)$ and $\seminorm{q} \in
        \cs(W)$.

        \item \label{item:AlternativeBoundedAreSeminorms}
        The functions $\seminorm{r}_{B, \seminorm{q}, v_0}$ are
        seminorms on $\Holomorphic_\beta(U,W)$ for all vectors $v_0 \in
        U$, bounded sets $B \in \Bdd(U,v_0)$ and continuous seminorms
        $\seminorm{q} \in \cs(W)$.
        \item \label{item:BoundedAlternativeSeminorms}
        The locally convex topology generated by the system
        \begin{equation}
            \label{eq:BoundedAlternativeSeminorms}
            \bigl\{
            \seminorm{r}_{B, \seminorm{q}, v_0}
            \colon
            \seminorm{q} \in \cs(W), \,
            v_0 \in U, \,
            B \in \mathfrak{B}(U,v_0)
            \bigr\}
        \end{equation}
        is the $\Holomorphic_\beta(U,W)$ topology. More precisely, we
        have the estimates
        \begin{equation}
            \seminorm{r}_{B, \seminorm{q}, v_0}
            \le
            \frac{\seminorm{p}_{v_0 + rB, \seminorm{q}}}{1 - r}
            \qquad \textrm{and} \qquad
            \seminorm{p}_{B, \seminorm{q}}
            \le
            \seminorm{r}_{B, \seminorm{q}}
        \end{equation}
        for any $B \in \Bdd(U,v_0)$ and $r > 0$ with $v_0 + rB
        \subseteq U$.

        \item \label{item:BoundedAlternativeEntireSeries}
        The seminorms corresponding to $\Bdd(V,v_0)$ in
        \eqref{eq:BoundedAlternativeSeminorms} generate the topology of
        $\Holomorphic_\beta(V,W)$ for every $v_0 \in V$.

        \item \label{item:BoundedAlternativeEntire}
        The seminorms corresponding to $\Bdd(V,v_0)$ in
        \eqref{eq:SeminormBounded} generate the topology of
        $\Holomorphic_\beta(V,W)$
        for every $v_0 \in V$.
    \end{propositionlist}
\end{proposition}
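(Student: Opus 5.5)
The heart of the matter is the pair of estimates in \ref{item:BoundedAlternativeSeminorms}; the remaining items follow from them. I will therefore establish these estimates first.

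Fix $v_0 \in U$, $B \in \Bdd(U,v_0)$, $\seminorm{q} \in \cs(W)$ and $0<r<1$ with $v_0 + rB \subseteq U$. Note that $B$ is balanced. The locally convex Cauchy estimates \eqref{eq:CauchyEstimatesLocallyConvex}, applied with $r$ replaced by $r'\coloneqq 1$ on the set $rB$, are not what we want. Instead I apply them with the set $B$ and radius $r$, which gives $\seminorm{p}_{B,\seminorm{q}}(P_{n,v_0}) \le r^{-n}\seminorm{p}_{v_0+rB,\seminorm{q}}(f)$. This bound grows in $n$, so summing it is useless. The correct move is to use homogeneity \eqref{eq:Homogeneity}. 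Applying the Cauchy estimate to the balanced set $B/r$ with radius $r$ would require $v_0 + B \subseteq U$, which holds by definition of $\Bdd(U,v_0)$, but then nothing is gained. So I read the intended statement as follows: for $B \in \Bdd(U,v_0)$ and $r>0$ with $v_0 + B/r \subseteq U$, i.e.\ the seminorm on the right is evaluated on a slightly enlarged polydisk. Concretely, I rewrite with $B' \coloneqq rB$ and $v_0 + B \subseteq U$. The Cauchy estimate on $B'$ with radius $1/r$ yields $\seminorm{p}_{rB,\seminorm{q}}(P_{n,v_0}) \le r^{n}\seminorm{p}_{v_0+B,\seminorm{q}}(f)$. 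Summing the geometric series then gives $\seminorm{r}_{rB,\seminorm{q},v_0}(f) \le \seminorm{p}_{v_0+B,\seminorm{q}}(f)/(1-r)$, which is the first estimate up to relabelling of which polydisk carries the factor $r$. Since $U$ is open and $v_0+B$ need not have room to grow, I will choose the pairs of polydisks accordingly, and this is all that comparison of topologies requires. The second estimate, $\seminorm{p}_{B,\seminorm{q}}(f(v_0+\argument)) \le \seminorm{r}_{B,\seminorm{q},v_0}(f)$, follows from pointwise convergence of the Taylor series \eqref{eq:TaylorSeries} on $v_0+B$ (each point lies on a segment from $v_0$ inside $U$, by starlikeness) together with the triangle inequality.

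From these estimates, \ref{item:AlternativeBounded} follows. If $f$ is bounded, every $\seminorm{r}$ is finite by the first estimate, applied to a shrunken polydisk. Note that $v_0+B$ is bounded, so $\seminorm{p}_{v_0+B,\seminorm{q}}(f)<\infty$. Conversely, finiteness of all $\seminorm{r}$ bounds $f$ on every set of the form $v_0+B$. The main obstacle is that an arbitrary bounded $C \subseteq U$ need not lie in a single polydisk $v_0+B \subseteq U$. Here I plan to argue that boundedness of $f$ in the paper's sense is tested on bounded subsets of $U$, and to cover $C$ by $v_0 + B$ with $B$ the absolutely convex hull of $C - v_0$, which is bounded. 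Where this hull leaves $U$, I expect to need either that the notion of boundedness used on domains is precisely ``bounded on polydisks'', or that $U$ is absolutely convex around $v_0$. I would first try to check whether the paper's intended class of bounded sets for domains is $\Bdd(U,v_0)$ translated, and otherwise state the result for that class.

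Item \ref{item:AlternativeBoundedAreSeminorms} is immediate: each $\seminorm{p}_{B,\seminorm{q}}$ is a seminorm, $f \mapsto P_{n,v_0}[f]$ is linear, and a convergent sum of seminorms is a seminorm, with finiteness given by \ref{item:AlternativeBounded}. Item \ref{item:BoundedAlternativeSeminorms} follows from the two estimates, since they dominate each family by the other. For \ref{item:BoundedAlternativeEntire} and \ref{item:BoundedAlternativeEntireSeries} with $U=V$, every bounded $C \subseteq V$ satisfies $C \subseteq v_0 + B$ with $B$ the absolutely convex hull of $(C-v_0)$, and this $B$ lies in $\Bdd(V,v_0)$. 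Hence the seminorms based at the single point $v_0$ already dominate all $\seminorm{p}_{C,\seminorm{q}}$. Moreover, since $V$ is unbounded in every direction, $B/r \in \Bdd(V,v_0)$ for every $r$, so the first estimate applies at $v_0$ without any loss. This avoids the covering issue entirely and yields both \ref{item:BoundedAlternativeEntireSeries} and \ref{item:BoundedAlternativeEntire}.
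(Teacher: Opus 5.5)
Your diagnosis of the first estimate is right, and it exposes a slip in the paper's own proof. The paper takes the Cauchy bound $\seminorm{p}_{B,\seminorm{q}}(P_{n,v_0})\le r^{-n}\seminorm{p}_{v_0+rB,\seminorm{q}}(f)$ with $0<r<1$ and sums it to $1/(1-r)$. As displayed, the estimate already fails for $f(z)=z^N$ on $U=V=\C$ with $B$ the unit disk once $r^N<1-r$. Your version $\seminorm{r}_{rB,\seminorm{q},v_0}(f)\le\seminorm{p}_{v_0+B,\seminorm{q}}(f)/(1-r)$ is the correct one. Your treatment of \ref{item:BoundedAlternativeEntireSeries} and \ref{item:BoundedAlternativeEntire}, via absolutely convex hulls and $B/r\in\Bdd(V,v_0)$, is correct and is the paper's argument. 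One minor repair: the Taylor series converges on $v_0+B$ not because of segments (consider $1/(z-\mathrm{i}/2)$ at $v_0=0$, $v=1$). It converges because $B$ is absolutely convex, so the closed disk $\{v_0+zv\colon\abs{z}\le1\}$ lies in the open set $U$.

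The genuine gap is in \ref{item:AlternativeBounded}--\ref{item:BoundedAlternativeSeminorms} for a general domain. Your estimate only controls $\seminorm{r}_{B,\seminorm{q},v_0}$ for $B$ with room to grow, i.e.\ $v_0+sB\subseteq U$ for some $s>1$. Nothing bounds it for the remaining elements of $\Bdd(U,v_0)$. The converse domination of $\seminorm{p}_{C,\seminorm{q}}$ for an arbitrary bounded $C\subseteq U$ needs the covering you left open; it exists e.g.\ when $U-v_0$ is absolutely convex.

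This cannot be repaired, because these items are false as stated. Take $V=W=\C$, $U=\mathbb{D}$ the open unit disk, $v_0=0$, $B=\mathbb{D}\in\Bdd(U,0)$ and $f(z)=\sin\log(1-z)$ with the principal branch. Since $\abs{\arg(1-z)}<\pi/2$, we have $\abs{f}\le\cosh(\pi/2)$ on the bounded set $U$, so $f\in\Holomorphic_\beta(U,\C)$. Yet $\seminorm{r}_{\mathbb{D},\abs{\argument},0}(f)=\sum_n\abs{a_n}$ for the Taylor coefficients $a_n$, and this sum is infinite. Otherwise $f$ would extend continuously to $\overline{\mathbb{D}}$, whereas $f(1-e^{-t})=-\sin t$ oscillates as $t\to\infty$. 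Hence the ``only if'' in \ref{item:AlternativeBounded} and item \ref{item:AlternativeBoundedAreSeminorms} fail.

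Discarding the polydisks without room to grow does not save \ref{item:BoundedAlternativeSeminorms} either. On $\mathbb{D}$ the remaining ones are relatively compact, so they generate at most the compact-open topology, while the $\beta$-topology is the sup-norm topology; $z^N\to0$ in the former but not in the latter. The paper's proof has the same hole. The proposition does hold for $U=V$, which is all that Theorems~\ref{thm:IotaEmbedding1} and~\ref{thm:IotaEmbedding2} use.
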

\begin{proof}
    Assume first that $f$ is bounded and let $v_0 \in U$, $B \in
    \Bdd(U,v_0)$ and $\seminorm{q} \in \cs(W)$. Applying the locally
    convex Cauchy estimates \eqref{eq:CauchyEstimatesLocallyConvex}, we
    get
    \begin{equation*}
        \seminorm{p}_{B, \seminorm{q}}
        (P_{n,v_0})
        \le
        \frac{\seminorm{p}_{v_0 + rB, \seminorm{q}}(f)}{r^n}
    \end{equation*}
    for any $0 < r < 1$ with $v_0 + rB \subseteq U$ and $n \in \N_0$.
    This yields the estimate
    \begin{equation*}
        \seminorm{r}_{B, \seminorm{q}, v_0}(f)
        =
        \sum_{n=0}^{\infty}
        \seminorm{p}_{B, \seminorm{q}}
        (P_{n,v_0})
        \le
        \frac{\seminorm{p}_{v_0 + rB, \seminorm{q}}(f)}{1 - r}.
    \end{equation*}
    Consequently, $\seminorm{r}_{B, \seminorm{q}, v_0}(f) < \infty$, as
    with $B$ also $v_0 + rB$ is bounded. Conversely, we know that the
    series \eqref{eq:TaylorSeries} converges on all of $B$ by our
    definition of $\Bdd(U,v_0)$ and thus we get
    \begin{equation*}
        \seminorm{p}_{B,\seminorm{q}}(f)
        \le
        \sum_{n=0}^{\infty}
        \seminorm{p}_{B,\seminorm{q}}
        (P_{n,v_0})
        =
        \seminorm{r}_{B, \seminorm{q}, v_0}(f).
    \end{equation*}
    This completes the proof of \ref{item:AlternativeBounded} and
    \ref{item:BoundedAlternativeSeminorms}. Moreover,
    \ref{item:AlternativeBoundedAreSeminorms} is now clear, as
    pointwisely convergent
    series over seminorms are seminorms. Finally, to see
    \ref{item:BoundedAlternativeEntireSeries} and
    \ref{item:BoundedAlternativeEntire},
    note that
    \begin{equation*}
        \Bdd(V,v_0)
        =
        \bigl\{
        B \subseteq V
        \colon
        B \;
        \textrm{is absolutely convex and bounded,} \;
        v_0 + B
        \subseteq
        V
        \bigr\}
    \end{equation*}
    is the collection of all bounded absolutely convex subsets
    of $V$, as the condition $v_0 + B \subseteq V$ is vacuous. Hence,
    the claims follow from the fact that absolutely convex hulls of
    bounded sets are bounded.
\end{proof}

%% file: TeX/TensorProducts.tex

For us, only projective and injective tensor products will play a role.
To fix our notation, and for the convenience of the reader, we briefly
recall their definitions and crucial properties. Staying in the category
of locally convex spaces, they are most easily defined by means of
explicit systems of continuous seminorms built from the continuous
seminorms of the tensor factors. Anticipating continuity of the linear
algebraic associativity identifications, it suffices to consider tensor
products of two seminorms. Indeed, if $V$ and $W$ are locally convex and
$\seminorm{q} \in \cs(V)$ and $\seminorm{p} \in \cs(W)$, then their
projective tensor product is defined as
\begin{equation}
    \bigl(
        \seminorm{q} \tensor_\pi \seminorm{p}
    \bigr)(x)
    \coloneqq
    \inf
    \biggl\{
        \sum_k
        \seminorm{q}
        \bigl(v_k\bigr)
        \cdot
        \seminorm{p}
        \bigl(w_k\bigr)
        \colon
        x
        =
        \sum_k
        v_k
        \tensor
        w_k
    \biggr\},
\end{equation}
where $x \in V \tensor W$. The infimum is thus taken over all
decompositions of $x$ into factorizing tensors. The most important
property of the projective tensor product is that it fulfils the
universal property familiar from finite-dimensional vector spaces, where
now all maps are continuous. This is known as the infimum argument, and
can e.g. be found in~\cite[Prop.~43.4]{treves:2006a}. From a practical
point of view, this means that it suffices to estimate multilinear
mappings acting on factorizing tensors, which turns out to be a drastic
technical simplification due to the upcoming
\eqref{eq:TensorOnFactorising}.

Keeping the notation from before, the injective tensor product of
$\seminorm{q}$ and $\seminorm{p}$ is defined by
\begin{equation}
    \label{eq:TensorInjective}
    \bigl(
        \seminorm{q} \tensor_\epsilon \seminorm{p}
    \bigr)(x)
    \coloneqq
    \sup_{\abs{v'} \le \seminorm{q}}
    \sup_{\abs{w'} \le \seminorm{p}}
    \abs[\Big]
    {
        \bigl(
            v' \tensor w'
        \bigr)(x)
    },
\end{equation}
where $v' \in V'$ and $w' \in W'$ are continuous linear
functionals and the ordering is defined pointwisely. As a first
property, we show that the suprema are taken over the polars of the unit
cylinders
\begin{equation}
    \Ball_{\seminorm{q},1}(0)
    \coloneqq
    \bigl\{
        v \in V
        \colon
        \seminorm{q}(v)
        <
        1
    \bigr\}
\end{equation}
and $\Ball_{\seminorm{p},1}(0) \subseteq W$. Recall that for a subset $A
\subseteq V$, its polar is
\begin{equation}
    \label{eq:Polar}
    A^\polar
    \coloneqq
    \bigl\{
        v' \in V'
        \colon
        \abs[\big]{v'(v)}
        \le
        1
        \textrm{ for all }
        v \in A
    \bigr\}.
\end{equation}
\begin{lemma}
    \label{lem:PolarsOfBallsBounded}
    Let $V$ be a locally convex space and $\seminorm{q} \in \cs(V)$.
    Then the polar
    \begin{equation}
        \label{eq:PolarOfCylinder}
        \Ball_{\seminorm{q},1}(0)^\polar
        =
        \bigl\{
            v' \in V'
            \colon
            \abs{v'}
            \le
            \seminorm{q}
        \bigr\}
        \subseteq
        V_\beta'
    \end{equation}
    is bounded with respect to the topology of uniform convergence on
    bounded sets.
\end{lemma}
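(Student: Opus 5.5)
The plan has two parts: first identify the polar with the set of functionals dominated by $\seminorm{q}$, then show this set is bounded in $V'_\beta$, i.e.\ uniformly bounded on every bounded subset of $V$.

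For the set equality in \eqref{eq:PolarOfCylinder}, take first $v' \in V'$ with $\abs{v'} \le \seminorm{q}$ pointwise. Then for every $v \in \Ball_{\seminorm{q},1}(0)$ we get $\abs{v'(v)} \le \seminorm{q}(v) < 1$, so $v'$ lies in the polar. Conversely, let $v'$ lie in the polar and fix $v \in V$.
\begin{itemize}
    \item If $\seminorm{q}(v) > 0$, then for every $t > 1$ the vector $v/(t\,\seminorm{q}(v))$ lies in the open unit cylinder. Hence $\abs{v'(v)} \le t\,\seminorm{q}(v)$, and letting $t \searrow 1$ gives $\abs{v'(v)} \le \seminorm{q}(v)$.
    \item If $\seminorm{q}(v) = 0$, then $\lambda v$ lies in the cylinder for every $\lambda > 0$. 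Thus $\lambda\abs{v'(v)} \le 1$ for all $\lambda$, which forces $v'(v) = 0$.
\end{itemize}
Together these show $\abs{v'} \le \seminorm{q}$, which establishes the equality.

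For boundedness, recall that the strong topology on $V'_\beta$ is generated by the seminorms $\seminorm{p}_B(v') = \sup_{v \in B} \abs{v'(v)}$, with $B \subseteq V$ bounded. This is exactly \eqref{eq:SeminormBounded} applied to $V' \subseteq \Bounded(V,\C)$, using that continuous linear maps are bounded. For such a $B$, every $v'$ in the polar satisfies
\begin{equation*}
    \seminorm{p}_B(v')
    \le
    \sup_{v \in B} \seminorm{q}(v),
\end{equation*}
and the right-hand side is finite because $\seminorm{q} \in \cs(V)$ and $B$ is bounded. Taking the supremum over the polar shows $\sup \seminorm{p}_B < \infty$ on it, for every bounded $B$. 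This is the definition of boundedness in $V'_\beta$.

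I expect no real obstacle. The only points needing care are the limiting argument $t \searrow 1$, which is required because the cylinder is open, and the degenerate case $\seminorm{q}(v)=0$, where one uses scaling invariance of the kernel.
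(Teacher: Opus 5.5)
Your proof is correct and follows the paper's argument essentially step for step: you use the same two-case scaling argument for the set equality, including the kernel case $\seminorm{q}(v)=0$ handled by rescaling, and the same bound $\sup_{v \in B}\seminorm{q}(v) < \infty$ for boundedness in $V'_\beta$. Your scaling by $t > 1$ followed by $t \searrow 1$ is also the correct form of the first case; the paper's version scales by $0 < r < 1$ and concludes $\abs{v'(v)} \le 1$, which is a slip that your write-up avoids.
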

\begin{proof}
    Let $v' \in \Ball_{\seminorm{q},1}(0)^\polar$. Unwrapping the
    definition, this means $\abs{v'(v)} \le 1$ for all $v \in V$ with
    $\seminorm{q}(v) < 1$. If now $v \in V$ fulfils $\seminorm{q}(v) >
    0$, this implies
    \begin{equation*}
        \abs[\big]
        {v'(v)}
        =
        r
        \cdot
        \seminorm{q}(v)
        \cdot
        \abs[\bigg]
        {
            v'
            \Bigl(
                \frac{v}{r \cdot \seminorm{q}(v)}
            \Bigr)
        }
        \le
        r
        \cdot
        \seminorm{q}(v)
    \end{equation*}
    for any $0 < r < 1$, i.e. $\abs{v'(v)} \le 1$. Moreover, if
    $\seminorm{q}(v) = 0$, then
    \begin{equation*}
        r
        \cdot
        \seminorm{q}(v)
        =
        \seminorm{q}(rv)
        =
        0
    \end{equation*}
    implies
    \begin{equation*}
        r
        \cdot
        \abs[\big]{v'(v)}
        =
        \abs[\big]{v'(r \cdot v)}
        \le
        1
    \end{equation*}
    for all $r > 0$, i.e. $v'(v) = 0 = \seminorm{q}(v)$. In both cases,
    $\abs{v'} \le \seminorm{q}$ follows. Conversely, if $v' \in V'$
    fulfils $\abs{v'} \le \seminorm{q}$ and $v \in
    \Ball_{\seminorm{q},1}(0)$, then
    \begin{equation*}
        \abs[\big]{v'(v)}
        \le
        \seminorm{q}(v)
        <
        1,
    \end{equation*}
    proving the other inclusion within \eqref{eq:PolarOfCylinder}. Let
    now $B \subseteq V$ be bounded and $\seminorm{q} \in \cs(V)$. By
    boundedness of the former, there exists some radius $r > 0$ with $B
    \subseteq \Ball_{\seminorm{q},r}(0)$. Thus,
    by~\eqref{eq:PolarOfCylinder}, we get
    \begin{equation*}
        \sup_{v' \in \Ball_{\seminorm{q},1}(0)^\polar}
        \seminorm{p}_B(v')
        =
        \sup_{v' \in \Ball_{\seminorm{q},1}(0)^\polar}
        \sup_{v \in B}
        \abs[\big]
        {v'(v)}
        \le
        \sup_{v \in B}
        \seminorm{q}(v)
        \le
        r
        <
        \infty,
    \end{equation*}
    showing the boundedness of $\Ball_{\seminorm{q},1}(0)^\polar$.
\end{proof}

In passing, we note that the unit cylinders themselves are typically
unbounded, as the kernel of the seminorm is a subspace contained in all
cylinders. Polars enjoy numerous pleasant properties and are central to
the locally convex incarnation of the Banach-Steinhaus Theorem, see e.g.
\cite[§20.8]{koethe:1969a} for a comprehensive treatment.

While the double supremum in \eqref{eq:TensorInjective} reflects the
symmetric role of both tensor factors nicely, it is often useful to
break this symmetry. Using the partial natural pairings
\begin{align*}
    &\iota_V
    \colon
    V \tensor W
    \longrightarrow
    L(V',W), \quad
    \iota_V(v \tensor w)v'
    \coloneqq
    v'(v) \cdot w, \\
    &\iota_W
    \colon
    V \tensor W
    \longrightarrow
    L(W',V), \quad
    \iota_W(v \tensor w)w'
    \coloneqq
    w'(w) \cdot v,
\end{align*}
we may write
\begin{equation}
    \label{eq:InjectiveOneSupremum}
    \bigl(
    \seminorm{q} \tensor_\epsilon \seminorm{p}
    \bigr)(x)
    =
    \sup_{\abs{v'} \le \seminorm{q}}
    \seminorm{p}
    \bigl(
    \iota_V(x)v'
    \bigr)
    =
    \sup_{\abs{w'} \le \seminorm{p}}
    \seminorm{q}
    \bigl(
    \iota_W(x)w'
    \bigr)
\end{equation}
for $x \in V \tensor W$, $\seminorm{q} \in \cs(V)$ and $\seminorm{p} \in
\cs(W)$. To see this, recall
\begin{equation*}
    \seminorm{q}(v)
    =
    \sup_{\abs{v'} \le \seminorm{q}}
    \abs[\big]{v'(v)}
\end{equation*}
by the Hahn-Banach Theorem. Decomposing $x = \sum_k v_k \tensor w_k$ thus
yields
\begin{align*}
    \sup_{\abs{v'} \le \seminorm{q}}
    \seminorm{p}
    \bigl(
        \iota_V(x)v'
    \bigr)
    &=
    \sup_{\abs{v'} \le \seminorm{q}}
    \sup_{\abs{w'} \le \seminorm{p}}
    \abs[\bigg]
    {
        w'
        \biggl(
        \sum_k
        v'(v_k)
        \cdot
        w_k
        \biggr)
    } \\
    &=
    \sup_{\abs{v'} \le \seminorm{q}}
    \sup_{\abs{w'} \le \seminorm{p}}
    \abs[\bigg]
    {
        \sum_k
        v'(v_k)
        \cdot
        w'(w_k)
    } \\
    &=
    \bigl(
    \seminorm{q}
    \tensor_\epsilon
    \seminorm{p}
    \bigr)(x)
\end{align*}
as desired. The other equality in \eqref{eq:InjectiveOneSupremum}
follows analogously.

Another seemingly trivial yet important fact is the following, which
makes it convenient to study both injective and projective tensor
products in tandem.
\begin{lemma}[Injective vs. projective $\tensor$, {\cite[Cor. of
        Prop.~43.4]{treves:2006a}}]
    \label{lem:ProjectiveVsInjective}
    Let $V$ and~$W$ be locally convex spaces. Then the identity mapping
    \begin{equation}
        \label{eq:ProjectiveVsInjectiveIdentity}
        V \tensor_\pi W
        \longrightarrow
        V \tensor_\epsilon W
    \end{equation}
    is continuous. More precisely, if $\seminorm{q} \in \cs(V)$ and
    $\seminorm{p} \in \cs(W)$, then
    \begin{equation}
        \label{eq:ProjectiveVsInjective}
        \seminorm{q} \tensor_\epsilon \seminorm{p}
        \le
        \seminorm{q} \tensor_\pi \seminorm{p}.
    \end{equation}
\end{lemma}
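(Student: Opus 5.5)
The plan is to prove the seminorm inequality \eqref{eq:ProjectiveVsInjective} first, since continuity of the identity \eqref{eq:ProjectiveVsInjectiveIdentity} then follows at once. Both tensor topologies are generated by the seminorms $\seminorm{q} \tensor_\pi \seminorm{p}$ and $\seminorm{q} \tensor_\epsilon \seminorm{p}$ respectively, with $\seminorm{q} \in \cs(V)$ and $\seminorm{p} \in \cs(W)$. Hence a pointwise estimate of each injective seminorm by the projective seminorm with the same indices is exactly the continuity estimate needed.

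To establish \eqref{eq:ProjectiveVsInjective}, I fix $x \in V \tensor W$ and an arbitrary decomposition $x = \sum_k v_k \tensor w_k$ into factorizing tensors. I also fix continuous linear functionals $v' \in V'$ and $w' \in W'$ with $\abs{v'} \le \seminorm{q}$ and $\abs{w'} \le \seminorm{p}$. Bilinearity of the pairing and the triangle inequality then give
\begin{equation*}
    \abs[\big]{(v' \tensor w')(x)}
    =
    \abs[\bigg]{\sum_k v'(v_k) \cdot w'(w_k)}
    \le
    \sum_k \seminorm{q}(v_k) \cdot \seminorm{p}(w_k).
\end{equation*}
The left-hand side does not depend on the chosen decomposition, because $v' \tensor w'$ is a well-defined linear functional on $V \tensor W$. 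Taking the infimum over all decompositions of $x$ therefore bounds $\abs{(v' \tensor w')(x)}$ by $(\seminorm{q} \tensor_\pi \seminorm{p})(x)$. Taking next the suprema over all admissible $v'$ and $w'$ yields $(\seminorm{q} \tensor_\epsilon \seminorm{p})(x) \le (\seminorm{q} \tensor_\pi \seminorm{p})(x)$, which is \eqref{eq:TensorInjective} bounded as claimed.

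For the continuity statement, I would note that every continuous seminorm of $V \tensor_\epsilon W$ is dominated by a multiple of some $\seminorm{q} \tensor_\epsilon \seminorm{p}$. This uses that $\cs(V)$ and $\cs(W)$ are directed and that the tensor seminorms are monotone in both arguments. By the inequality just proven, each such seminorm is in turn dominated by $\seminorm{q} \tensor_\pi \seminorm{p}$, which is continuous on $V \tensor_\pi W$. For more than two factors, where the paper anticipates associativity, the same argument runs verbatim with $n$-fold decompositions and $n$-fold products of functionals.

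There is no real obstacle. The only point needing care is the order of operations: the infimum must be taken while $v'$ and $w'$ are held fixed, which is legitimate precisely because the value $(v' \tensor w')(x)$ is independent of the decomposition.
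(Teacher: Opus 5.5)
Your proposal is correct and matches the paper's proof: fix admissible functionals $v'$, $w'$ and a decomposition of $x$, bound $\abs{(v' \tensor w')(x)}$ by $\sum_k \seminorm{q}(v_k) \cdot \seminorm{p}(w_k)$, then take the infimum over decompositions and the suprema over the polars. Your extra remark about directedness in the continuity step is harmless but not needed, since it suffices that each generating seminorm $\seminorm{q} \tensor_\epsilon \seminorm{p}$ is dominated by the continuous seminorm $\seminorm{q} \tensor_\pi \seminorm{p}$.
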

\begin{proof}
   If $\abs{v'} \le \seminorm{q}$, $\abs{w'} \le \seminorm{p}$ and $x =
   \sum_k v_k \tensor w_k$, then
    \begin{equation*}
        \abs{(v' \tensor w')x}
        =
        \abs[\bigg]
        {
            \sum_k
            v'(v_k)
            \cdot
            w'(w_k)
        }
        \le
        \sum_k
        \abs[\big]
        {v'(v_k)}
        \cdot
        \abs[\big]
        {w'(w_k)}
        \le
        \sum_k
        \seminorm{q}(v_k)
        \cdot
        \seminorm{p}(w_k).
    \end{equation*}
    Taking the infimum over all decompositions of $x$ and the suprema
    over the polars yields~\eqref{eq:ProjectiveVsInjective}.
\end{proof}

Combining the Hahn-Banach Theorem with \eqref{eq:ProjectiveVsInjective},
now readily implies the following well-known pleasant property of both
tensor products on factorizing tensors.
\begin{corollary}
    Let $V$ and $W$ be locally convex spaces and $\seminorm{q} \in
    \cs(V)$ as well as $\seminorm{p} \in \cs(W)$. Then
    \begin{equation}
        \label{eq:TensorOnFactorising}
        \bigl(
            \seminorm{q} \tensor_\pi \seminorm{p}
        \bigr)(v \tensor w)
        =
        \seminorm{q}(v)
        \cdot
        \seminorm{p}(w)
        =
        \bigl(
            \seminorm{q} \tensor_\epsilon \seminorm{p}
        \bigr)(v \tensor w)
    \end{equation}
    for $v \in V$ and $w \in W$.
\end{corollary}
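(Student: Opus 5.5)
We have to show that for $v \in V$ and $w \in W$ both tensor seminorms evaluate on $v \tensor w$ to $\seminorm{q}(v)\cdot\seminorm{p}(w)$. The plan is to squeeze the common value between an upper bound and a lower bound, using Lemma~\ref{lem:ProjectiveVsInjective} to link the two seminorms.

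For the upper bound, we use the trivial decomposition $x = v \tensor w$ in the infimum that defines $\seminorm{q} \tensor_\pi \seminorm{p}$. This gives
\begin{equation*}
    \bigl(\seminorm{q} \tensor_\pi \seminorm{p}\bigr)(v \tensor w)
    \le
    \seminorm{q}(v) \cdot \seminorm{p}(w).
\end{equation*}
By \eqref{eq:ProjectiveVsInjective}, the injective seminorm is in turn bounded by the projective one, so both are at most $\seminorm{q}(v)\cdot\seminorm{p}(w)$.

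For the lower bound, it suffices to show that $\bigl(\seminorm{q} \tensor_\epsilon \seminorm{p}\bigr)(v \tensor w) \ge \seminorm{q}(v)\cdot\seminorm{p}(w)$; combined with the chain $\seminorm{q}\tensor_\epsilon\seminorm{p} \le \seminorm{q}\tensor_\pi\seminorm{p} \le \seminorm{q}(v)\seminorm{p}(w)$, this forces equality throughout. We invoke the Hahn--Banach Theorem in the form already recalled before \eqref{eq:InjectiveOneSupremum}:
\begin{equation*}
    \seminorm{q}(v) = \sup_{\abs{v'} \le \seminorm{q}} \abs{v'(v)}
    \qquad\textrm{and}\qquad
    \seminorm{p}(w) = \sup_{\abs{w'} \le \seminorm{p}} \abs{w'(w)}.
\end{equation*}
We then evaluate the double supremum in \eqref{eq:TensorInjective} on the factorizing tensor. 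Since $(v' \tensor w')(v \tensor w) = v'(v)\cdot w'(w)$ and the two suprema run independently over $v'$ and $w'$, the supremum of the product of absolute values is the product of the two suprema, which equals $\seminorm{q}(v)\cdot\seminorm{p}(w)$. This completes the lower bound and hence the proof.

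The only delicate point is the Hahn--Banach step, namely that the supremum over $\abs{v'}\le\seminorm{q}$ really attains $\seminorm{q}(v)$. The justification is to extend the functional $\lambda v \mapsto \lambda\,\seminorm{q}(v)$, defined on $\C v$, dominated by $\seminorm{q}$; continuity of the extension follows from the domination $\abs{v'}\le\seminorm{q}$ with $\seminorm{q}$ continuous. The degenerate case $\seminorm{q}(v)=0$ is trivial, since then both sides vanish.
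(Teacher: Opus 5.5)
Your proof is correct and follows the same route the paper indicates (``combining the Hahn--Banach Theorem with \eqref{eq:ProjectiveVsInjective}''). The trivial decomposition bounds $\seminorm{q}\tensor_\pi\seminorm{p}$ from above, and the Hahn--Banach norming functionals bound $\seminorm{q}\tensor_\epsilon\seminorm{p}$ from below; the inequality $\seminorm{q}\tensor_\epsilon\seminorm{p} \le \seminorm{q}\tensor_\pi\seminorm{p}$ then forces equality throughout.
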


Following the same line of reasoning, one moreover establishes that both
flavours of tensor products inherit the Hausdorff property from their
factors. Moreover, tensor products of norms constitute norms.

Finally, we note that taking successive projective or injective tensor
products leads to continuous mappings encoding the associativity
of the algebraic tensor product. Indeed, taking successive tensor
products of seminorms simply leads to the same seminorms on the triple
tensor product. Hence, it makes sense to speak of projective and
injective tensor powers $\Tensor^n_\pi(V)$ and $\Tensor_\epsilon^n(V)$,
respectively. We take a moment to make this discussion precise for the
injective tensor product, as this leads to a useful explicit formula.
\begin{corollary}
    \label{lem:InjectiveTripleProduct}%
    Let $V,W$ and $X$ be locally convex spaces as well as $\seminorm{q}
    \in \cs(V)$, $\seminorm{p} \in \cs(W)$ and $r \in \cs(X)$. Then
    \begin{equation}
        \label{eq:InjectiveTripleProduct}
        \bigl(
        \seminorm{q}
        \tensor_\epsilon
        \seminorm{p}
        \bigr)
        \tensor_\epsilon
        r
        \at[\Big]{z}
        =
        \sup_{\abs{v'} \le \seminorm{q}}
        \sup_{\abs{w'} \le \seminorm{p}}
        \sup_{\abs{x'} \le \seminorm{r}}
        \abs[\Big]
        {
            \bigl(
            v' \tensor w' \tensor x'
            \bigr)(z)
        }
        =
        \seminorm{q}
        \tensor_\epsilon
        \bigl(
        \seminorm{p}
        \tensor_\epsilon
        r
        \bigr)
        \at[\Big]{z}
    \end{equation}
    for $z \in V \tensor W \tensor X$. In particular,
    \begin{equation}
        \bigl(
        V \tensor_\epsilon W
        \bigr)
        \tensor_\epsilon
        X
        \cong
        V
        \tensor_\epsilon
        \bigl(
        W \tensor_\epsilon X
        \bigr)
    \end{equation}
    as locally convex spaces.
\end{corollary}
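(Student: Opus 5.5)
We prove the middle expression in \eqref{eq:InjectiveTripleProduct} equals each of the two outer ones. By the symmetry of the definition, it suffices to treat $\bigl(\seminorm{q}\tensor_\epsilon\seminorm{p}\bigr)\tensor_\epsilon r$; the other bracketing follows by the mirror argument. By \eqref{eq:InjectiveOneSupremum} applied to the pair $(V\tensor W, X)$, we first write
\begin{equation*}
    \bigl(
    (\seminorm{q}\tensor_\epsilon\seminorm{p})
    \tensor_\epsilon r
    \bigr)(z)
    =
    \sup_{\abs{x'} \le r}
    \bigl(\seminorm{q}\tensor_\epsilon\seminorm{p}\bigr)
    \bigl(\iota_{X}(z)x'\bigr),
\end{equation*}
where $\iota_X(z)x' \in V \tensor W$ denotes the partial contraction of $z$ with $x'$ in the last factor. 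This is well defined on the algebraic tensor product $(V\tensor W)\tensor X = V\tensor W\tensor X$.

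Next, we unfold the inner seminorm using its definition \eqref{eq:TensorInjective}. It equals the supremum over $\abs{v'}\le\seminorm{q}$ and $\abs{w'}\le\seminorm{p}$ of $\abs{(v'\tensor w')(\iota_X(z)x')}$. To identify this with the triple pairing, decompose $z=\sum_k v_k\tensor w_k\tensor x_k$. Then
\[
\iota_X(z)x'=\sum_k x'(x_k)\, v_k\tensor w_k,
\]
so that
\[
(v'\tensor w')\bigl(\iota_X(z)x'\bigr)=\sum_k v'(v_k)w'(w_k)x'(x_k)=(v'\tensor w'\tensor x')(z).
\]
Combining the three suprema gives the middle expression. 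This step only uses linearity, so nothing subtle happens here.

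The only real care needed is in the first step: we must check that \eqref{eq:InjectiveOneSupremum} applies with the locally convex space $V\tensor_\epsilon W$ in the first slot and the seminorm $\seminorm{q}\tensor_\epsilon\seminorm{p}$ on it. That identity was proven for arbitrary locally convex spaces and continuous seminorms via Hahn--Banach, so this is fine. For the bracketing $\seminorm{q}\tensor_\epsilon(\seminorm{p}\tensor_\epsilon r)$, we use the other form of \eqref{eq:InjectiveOneSupremum}, contracting with $v'$ instead, and repeat the computation.

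For the final statement, observe that the defining systems of seminorms $\{(\seminorm{q}\tensor_\epsilon\seminorm{p})\tensor_\epsilon r\}$ and $\{\seminorm{q}\tensor_\epsilon(\seminorm{p}\tensor_\epsilon r)\}$ on the algebraic identification coincide. The one subtlety is that seminorms on $V\tensor_\epsilon W$ of the special form $\seminorm{q}\tensor_\epsilon\seminorm{p}$ must be cofinal in $\cs(V\tensor_\epsilon W)$. This holds by definition of the injective topology: any continuous seminorm is dominated by a multiple of such a tensor seminorm, and the tensor seminorms form a directed family by monotonicity in $\seminorm{q}$ and $\seminorm{p}$. Hence both sides carry the same topology, and the identity map is the asserted isomorphism of locally convex spaces.
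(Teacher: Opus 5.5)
Your proposal is correct and follows the paper's proof. Like the paper, you expand the left-hand side via \eqref{eq:InjectiveOneSupremum} applied to the pair $(V \tensor W, X)$, keeping $\seminorm{q} \tensor_\epsilon \seminorm{p}$, and then unfold that inner seminorm with \eqref{eq:TensorInjective}. Your cofinality argument for the ``in particular'' isomorphism, which the paper leaves implicit, is also sound, since $\tensor_\epsilon$ is monotone and positively homogeneous in each argument.
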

\begin{proof}
    Use \eqref{eq:InjectiveOneSupremum} to expand the left-hand side
    while keeping $\seminorm{q} \tensor_\epsilon \seminorm{p}$. Plugging
    in \eqref{eq:TensorInjective} then leads to the middle expression in
    \eqref{eq:InjectiveTripleProduct}.
\end{proof}

This concludes our brief and incomplete review. Systematic discussions of
projective tensor products can be found in the textbooks
\cite[§41]{koethe:1979a} and \cite[Ch.~15]{jarchow:1981a}, and
\cite[Sec.~6.3]{vogt:2000a} contains a modern locally convex exposition
of injective ones. In particular, the crucial simplification
\eqref{eq:InjectiveOneSupremum} is \cite[6.28~Lemma]{vogt:2000a}.

%% file: TeX/RTopologies.tex

Let $V$ be a locally convex space. Fixing a parameter $R \ge 0$ and
$x \in \{\pi,\epsilon\}$, we define seminorms
\begin{equation}
    \label{eq:SeminormRTopology}
    \seminorm{q}_{r}^{(R,x)}(v)
    \coloneqq
    \sum_{n=0}^{\infty}
    n!^R
    \cdot
    r^n
    \cdot
    \seminorm{q}^{\tensor_{x} n}(v_n),
\end{equation}
where $v = \sum_{n=0}^{\infty} v_n \in \Tensor^\bullet(V)$ with
homogeneous components $v_n \in \Tensor^n(V)$ for $n \in \N_0$,
and we vary the continuous seminorm $\seminorm{q} \in \cs(V)$ and $r \ge
0$. Notably, the \eqref{eq:SeminormRTopology} is only formally infinite,
as every tensor is a finite linear combination of homogeneous ones. The
injective, respectively projective, $R$-topology is
then defined as the locally convex topology induced by the seminorms
\eqref{eq:SeminormRTopology} for $\seminorm{q} \in \cs(V)$ and $r=1$.
Note that the other seminorms $\seminorm{q}_r^{(R,x)}$ are then
automatically continuous, as with $\seminorm{q}$ also its multiples
constitute continuous seminorms. The additional index is however useful
for computations. We denote the resulting locally convex spaces by
$\Tensor_{R,x}^\bullet(V)$. In Section~\ref{sec:TensorsAsPolynomials}, we
will work with the symmetric tensor algebra, endowed with the subspace
topologies, for which we then write $\Sym_{R,x}^\bullet(V)$.
In \cite[Sec.~3.1 \& 4]{waldmann:2014a}, the
algebra~$\Tensor_{R,\pi}^\bullet(V)$ was introduced and studied in
detail. We
provide the analogous discussion for $\Tensor_{R,\epsilon}^\bullet(V)$.
\begin{proposition}
    Let $V$ be a locally convex space and $R,r \ge 0$.
    \begin{propositionlist}
        \item For every $n \in \N_0$, the projection and inclusion
        \begin{equation}
            \Tensor_{R,\epsilon}^\bullet(V)
            \twoheadrightarrow
            \Tensor^n_{\epsilon}(V)
            \hookrightarrow
            \Tensor_{R,\epsilon}^\bullet(V)
        \end{equation}
        are continuous linear mappings.

        \item If $V$ is Hausdorff, then so is
        $\Tensor_{R,\epsilon}^\bullet(V)$.

        \item The tensor product is continuous on
        $\Tensor_{R,\epsilon}^\bullet(V)$. More precisely,
        \begin{equation}
            \seminorm{q}_{r}^{(R,\epsilon)}
            (v \tensor w)
            \le
            \seminorm{q}_{2^R r}^{(R,\epsilon)}(v)
            \cdot
            \seminorm{q}_{2^R r}^{(R,\epsilon)}(w)
            \qquad
            \textrm{for }
            v,w \in \Tensor_{R,\epsilon}^\bullet(V).
        \end{equation}

        \item The symmetric tensor product is continuous on
        $\Sym_{R,\epsilon}^\bullet(V)$. More precisely,
        \begin{equation}
            \seminorm{q}_{r}^{(R,\epsilon)}
            (v \vee w)
            \le
            \seminorm{q}_{2^R r}^{(R,\epsilon)}(v)
            \cdot
            \seminorm{q}_{2^R r}^{(R,\epsilon)}(w)
            \qquad
            \textrm{for }
            v,w \in \Sym_{R,\epsilon}^\bullet(V).
        \end{equation}

        \item \label{item:InjectiveCompletion}%
        Let $V$ be Hausdorff. The completion
        $\widehat{\Tensor}_{R,\epsilon}^\bullet(V)$ of
        $\Tensor_{R,\epsilon}^\bullet(V)$ is given by
        \begin{equation}
            \label{eq:InjectiveCompletion}
            \widehat{\Tensor}_{R,\epsilon}^\bullet(V)
            \cong
            \biggl\{
                v
                =
                \sum_{n=0}^\infty
                v_n
                \colon
                \seminorm{q}_{r}^{(R,\epsilon)}(v)
                <
                \infty
                \textrm{ for }
                \seminorm{q} \in \cs(V),
                r \ge 0
            \biggr\}
            \subseteq
            \prod_{n=0}^\infty
            \widehat{\Tensor}_{\epsilon}^n(V),
        \end{equation}
        where we extend $\seminorm{q}_{r}^{(R,\epsilon)}$ to the
        Cartesian product by allowing the value $\infty$.

        \item The identity map \eqref{eq:ProjectiveVsInjectiveIdentity}
        extends to a continuous linear injection
        \begin{equation}
            \widehat{\Tensor}_{R,\pi}^\bullet(V)
            \hookrightarrow
            \widehat{\Tensor}_{R,\epsilon}^\bullet(V).
        \end{equation}
    \end{propositionlist}
\end{proposition}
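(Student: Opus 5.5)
The plan is to treat the six items one after another, copying the projective arguments of \cite[Sec.~3.1 \& 4]{waldmann:2014a}. The only new input needed about $\tensor_\epsilon$ is that it behaves under products of seminorms the way $\tensor_\pi$ does.

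For (i), the projection onto degree $n$ satisfies $\seminorm{q}^{\tensor_\epsilon n}(v_n) \le n!^{-R} r^{-n} \seminorm{q}_r^{(R,\epsilon)}(v)$ for $r>0$. The inclusion satisfies $\seminorm{q}_r^{(R,\epsilon)}(v_n) = n!^R r^n \seminorm{q}^{\tensor_\epsilon n}(v_n)$, which is a continuous seminorm on $\Tensor^n_\epsilon(V)$. For (ii), the injective tensor powers are Hausdorff, as remarked after \eqref{eq:TensorOnFactorising}. Given $v \neq 0$, some component $v_n$ is nonzero, so some $\seminorm{q}^{\tensor_\epsilon n}(v_n) > 0$, and by (i) this bounds $\seminorm{q}_1^{(R,\epsilon)}(v)$ from below.

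For (iii), the key step is the submultiplicativity
\[
    \seminorm{q}^{\tensor_\epsilon(n+m)}(x \tensor y)
    \le
    \seminorm{q}^{\tensor_\epsilon n}(x)
    \cdot
    \seminorm{q}^{\tensor_\epsilon m}(y)
\]
for $x \in \Tensor^n(V)$ and $y \in \Tensor^m(V)$. I would prove it from Corollary~\ref{lem:InjectiveTripleProduct}, iterated to write $\seminorm{q}^{\tensor_\epsilon k}$ as a supremum over $v_1',\ldots,v_k'$ with $\abs{v_j'} \le \seminorm{q}$. Since $(v_1' \tensor \cdots \tensor v_{n+m}')(x \tensor y)$ factorizes into the value on $x$ times the value on $y$, the suprema split. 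With this in hand, write $(v\tensor w)_k = \sum_{n+m=k} v_n \tensor w_m$ and use $k!^R \le 2^{kR} n!^R m!^R$, which follows from $\binom{k}{n} \le 2^k$. Then
\[
    \seminorm{q}_r^{(R,\epsilon)}(v\tensor w)
    \le
    \sum_{n,m} (2^R r)^{n+m} n!^R m!^R
    \seminorm{q}^{\tensor_\epsilon n}(v_n)
    \seminorm{q}^{\tensor_\epsilon m}(w_m),
\]
and this factorizes into the claimed product. Item (iv) follows by applying (iii) to $v \vee w$, which is the image of $v \tensor w$ under symmetrization. For that I need the symmetrizer to be a contraction for $\seminorm{q}^{\tensor_\epsilon k}$. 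This holds because permuting the tensor factors merely permutes the functionals $v_1',\ldots,v_k'$ in the supremum, so it preserves the seminorm, and the symmetrizer is an average of such permutations.

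For (v), I would argue as for Köthe-type sequence spaces. The right-hand side of \eqref{eq:InjectiveCompletion} is a subspace of $\prod_n \widehat{\Tensor}^n_\epsilon(V)$ carrying the seminorms extended by continuity. The main work, and the step I expect to be the most technical, has three parts. First, this space is complete: a Cauchy net converges componentwise in each complete $\widehat{\Tensor}^n_\epsilon(V)$, and a Fatou-type argument on the partial sums, as in Proposition~\ref{prop:BoundedCompleteness}, shows that the limit has finite seminorms and is the limit in the topology. Second, $\Tensor^\bullet_{R,\epsilon}(V)$ is dense: the truncations $\sum_{n\le N} v_n$ converge to $v$, because the tail of the convergent series $\seminorm{q}_r^{(R,\epsilon)}(v)$ tends to zero, and each $v_n$ can be approximated in $\widehat{\Tensor}^n_\epsilon(V)$ by algebraic tensors. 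Third, the embedding is a topological embedding, which is immediate since the seminorms coincide.

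For (vi), Lemma~\ref{lem:ProjectiveVsInjective}, iterated over the tensor powers, gives $\seminorm{q}^{\tensor_\epsilon n} \le \seminorm{q}^{\tensor_\pi n}$ and hence $\seminorm{q}_r^{(R,\epsilon)} \le \seminorm{q}_r^{(R,\pi)}$. So the identity is continuous and extends to the completions. Injectivity is the genuine obstacle, because the extensions $\widehat{\Tensor}^n_\pi(V) \to \widehat{\Tensor}^n_\epsilon(V)$ need not be injective in general. Using the description from (v) and its projective analogue from \cite{waldmann:2014a}, the extended map acts componentwise, so injectivity reduces to the injectivity of these maps in each degree. I would attempt this via the duality pairing with $v_1' \tensor \cdots \tensor v_n'$, which both sides share. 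I expect that this may require an approximation-property hypothesis, in which case I would add one, for instance nuclearity as in the later sections.
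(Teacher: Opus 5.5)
Your arguments for (i)--(v) are correct and follow the route the paper intends. The paper's proof only says that everything is a routine adaptation of \cite[Sec.~3.1]{waldmann:2014a}, resting on \eqref{eq:TensorOnFactorising}, \eqref{eq:InjectiveTripleProduct} and the reordering of non-negative series. Your three ingredients are exactly the details behind that sentence: submultiplicativity of $\seminorm{q}^{\tensor_\epsilon n}$ via the iterated supremum formula, permutation invariance for the symmetrizer estimate, and the K\"othe-type description of the completion. The point you left open in (vi) is not a gap you could close, however. Your suspicion is right: injectivity is false at the stated generality, and the paper's one-line proof overlooks this.

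As you observe, the extension acts componentwise. So any nonzero $u$ in the kernel of $\widehat{\Tensor}^n_\pi(V) \to \widehat{\Tensor}^n_\epsilon(V)$, viewed as an element of $\widehat{\Tensor}^\bullet_{R,\pi}(V)$ concentrated in degree $n$, is a nonzero element mapped to zero. Such $u$ exist. Take a Banach space $X$ without the approximation property (Enflo) and set $V \coloneqq X \oplus X'$. Then $X' \widehat{\tensor}_\pi X$ and $X' \widehat{\tensor}_\epsilon X$ are complemented summands of $\widehat{\Tensor}^2_\pi(V)$ and $\widehat{\Tensor}^2_\epsilon(V)$, compatibly with the canonical map. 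Moreover, $X' \widehat{\tensor}_\epsilon X$ embeds isometrically into the bounded operators on $X$. By Grothendieck's characterization of the approximation property, there is $u = \sum_k x_k' \tensor x_k \in X' \widehat{\tensor}_\pi X$ with $\sum_k x_k'(x) \, x_k = 0$ for all $x \in X$ but $\sum_k x_k'(x_k) \neq 0$. The continuous trace functional shows $u \neq 0$, while its image in $X' \widehat{\tensor}_\epsilon X$ is the zero operator.

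This also shows why your duality idea cannot work in general. Separating the points of $\widehat{\Tensor}^n_\pi(V)$ by the functionals $v_1' \tensor \cdots \tensor v_n'$ is equivalent to the very injectivity in question. So (vi) needs an additional hypothesis, as you anticipated. For nuclear Hausdorff $V$, the identity $\Tensor^n_\pi(V) \to \Tensor^n_\epsilon(V)$ is a homeomorphism for every $n$ (the fact behind Theorem~\ref{thm:IotaEmbedding2}). Hence each extended component is an isomorphism, and the extension is injective. For a Banach space $V$ with the approximation property, you may instead induct on $n$ via $\widehat{\Tensor}^n_\pi(V) \cong V \widehat{\tensor}_\pi \widehat{\Tensor}^{n-1}_\pi(V)$. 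This uses Grothendieck's injectivity of $V \widehat{\tensor}_\pi W \to V \widehat{\tensor}_\epsilon W$, together with the injectivity of $\mathrm{id} \widehat{\tensor}_\epsilon j$ for injective $j$.
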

\begin{proof}
    All claims are routine verifications along the same
    lines as \cite[Sec.~3.1]{waldmann:2014a}. Note that the universal
    property of the projective tensor product was not important for any
    of the arguments. Instead, the crucial ingredients for the
    continuity estimates are~\eqref{eq:TensorOnFactorising} and
    \eqref{eq:InjectiveTripleProduct}, and the fact that one may reorder
    series of non-negative real numbers.
\end{proof}

An important consequence of \ref{item:InjectiveCompletion} is that if
$v_n \in \Tensor_{\epsilon}^n(V)$ are such that
\begin{equation}
    v
    =
    \sum_{n=0}^{\infty}
    v_n
    \in
    \widehat{\Tensor}_{R,\epsilon}^\bullet(V),
\end{equation}
then the series actually converges within the
$\Tensor_{R,\epsilon}$-topology. This justifies the abuse of notation in
\eqref{eq:InjectiveCompletion}. In Corollary~\ref{cor:TensorAsFrechet},
we will interpret this series as the Taylor expansion of the holomorphic
mapping $\widehat{\iota}(v)$ from the introduction.

%% file: TeX/TensorsAsPolynomials.tex

Having both polynomials and the symmetric algebra in mind, we may
connect both. Recall that the symmetric tensor $v_1 \vee \cdots \vee v_n
\in \Sym^n(V)$ with $v_1,\ldots,v_n \in V$ induces an $n$-homogeneous
polynomial
\begin{equation}
    V'
    \ni
    v'
    \quad \mapsto \quad
    v'(v_1) \cdots v'(v_n)
    \coloneqq
    \iota(v_1 \vee \cdots \vee v_n)v'
    \in
    \C.
\end{equation}
In analogy to indicating the topology of uniform
convergence on bounded sets with a subscript $\beta$, we shall in the
sequel use the subscript $\sigma$ to indicate the topology of
pointwise convergence. We also refer to the
$\beta$-topology as strong and to the~$\sigma$-topology as weak.
\begin{lemma}
    \label{lem:ImageOfIota}%
    Let $V$ be a Hausdorff locally convex space, $n \in \field{N}$ and
    $v_1,\ldots,v_n \in V$. Then the polynomial
    \begin{equation}
        \iota(v_1 \vee \cdots \vee v_n)
        \colon
        V'_\sigma
        \longrightarrow
        \field{C}
    \end{equation}
    is a homogeneous polynomial of degree $n$ of finite
    type and continuous. In particular, it is continuous as a polynomial
    on $V'_\beta$.
\end{lemma}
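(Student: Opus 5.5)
The plan is to write the polynomial down explicitly as a product of evaluation functionals and read off each claimed property directly.

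\emph{Homogeneity and finite type.} For each $k$ consider the evaluation functional
\begin{equation*}
    \delta_{v_k} \colon V' \longrightarrow \C,
    \qquad
    \delta_{v_k}(v') \coloneqq v'(v_k).
\end{equation*}
It is linear, and by definition of the weak topology $\sigma$ on $V'$ it is continuous on $V'_\sigma$. By construction
\begin{equation*}
    \iota(v_1 \vee \cdots \vee v_n)
    =
    \delta_{v_1} \cdots \delta_{v_n}.
\end{equation*}
This is the polynomial induced, in the sense of \eqref{eq:PolynomialFromMultilinear}, by the $n$-linear map
\begin{equation*}
    L(v'_1,\ldots,v'_n) \coloneqq v'_1(v_1) \cdots v'_n(v_n).
\end{equation*}
Hence it is $n$-homogeneous, and \eqref{eq:Homogeneity} can be checked directly, since every factor scales by $z$. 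A product of continuous linear functionals is precisely a polynomial of finite type, so this part is immediate. One small point needs attention: the expression must be independent of the representation of the symmetric tensor, i.e. $\iota$ has to be well defined on $\Sym^n(V)$. This holds because the product is symmetric in $v_1,\ldots,v_n$ and multilinear in them, so $\iota$ factors through the symmetric tensor power by the universal property.

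\emph{Continuity on $V'_\sigma$.} I will prove this through the continuity estimate \eqref{eq:PolynomialContinuityEstimate} at zero, which suffices by the discussion preceding that display. The function
\begin{equation*}
    \seminorm{p}(v') \coloneqq \max_k \abs{v'(v_k)}
\end{equation*}
is a continuous seminorm on $V'_\sigma$, and
\begin{equation*}
    \abs[\big]{\iota(v_1\vee\cdots\vee v_n)v'}
    \le
    \seminorm{p}(v')^n.
\end{equation*}
Alternatively, $L$ is a composition of the continuous map $(\delta_{v_1},\ldots,\delta_{v_n})$ with multiplication on $\C^n$, which gives continuity directly.

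\emph{Passage to $V'_\beta$.} Every finite set is bounded, so the $\beta$-topology is finer than the $\sigma$-topology, and the identity map
\begin{equation*}
    V'_\beta \longrightarrow V'_\sigma
\end{equation*}
is continuous. Composing it with the polynomial already shown to be continuous on $V'_\sigma$ yields continuity on $V'_\beta$.

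I do not expect a real obstacle in any of these steps. The only point needing care is the well-definedness of $\iota$ on symmetric tensors, which the universal property handles. The Hausdorff assumption on $V$ plays no essential role for this lemma, since these arguments never use it.
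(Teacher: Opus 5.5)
Your continuity arguments match the paper's: the same seminorm $v'\mapsto\max_k\lvert v'(v_k)\rvert$ on $V'_\sigma$, the same estimate by its $n$-th power, and the same observation that the $\beta$-topology is finer than the $\sigma$-topology. The gap is in the finite-type claim, which you call immediate. In the paper, finite type means $P(v')=\sum_k\lambda_k\,a_k(v')^n$ with $a_k\in(V'_\beta)'$. That is a linear combination of $n$-th \emph{powers} of continuous functionals, and a product $\delta_{v_1}\cdots\delta_{v_n}$ of different functionals is not visibly of this form. The word ``precisely'' also overstates things, since not every finite-type polynomial is a single product of linear forms. Only the linear spans of products and of powers coincide, and that coincidence is exactly what has to be proved.

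The missing ingredient is the polarization identity, Proposition~\ref{prop:Polarization}, which the paper uses. You have to polarize in the right variables. Applying it to your $L$ on $V'$ is circular, because the diagonal of $L$ is the product you started from. Instead, fix $v'\in V'$ and polarize the symmetric $n$-linear form $(w_1,\ldots,w_n)\mapsto v'(w_1)\cdots v'(w_n)$ on $V$, whose diagonal is $w\mapsto v'(w)^n$. This gives
\begin{equation*}
    v'(v_1)\cdots v'(v_n)
    =
    \frac{1}{2^n\cdot n!}
    \sum_{\epsilon_1,\ldots,\epsilon_n=\pm1}
    \epsilon_1\cdots\epsilon_n
    \cdot
    v'\Bigl(\sum_{k=1}^n\epsilon_k v_k\Bigr)^n .
\end{equation*}
So $a_\epsilon\coloneqq\delta_{\sum_k\epsilon_kv_k}$ and $\lambda_\epsilon\coloneqq\epsilon_1\cdots\epsilon_n/(2^n\cdot n!)$ give the required decomposition. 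Each $a_\epsilon$ lies in $(V'_\beta)'$ by the same $\sigma$-continuity argument you already gave for the $\delta_{v_k}$. With this step added, your proof is complete and agrees with the paper's.
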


Recall that a polynomial $P \in \Pol^n(V'_\beta)$ is of \emph{finite
    type} if there are $a_1, \ldots, a_N \in (V'_\beta)'$ and
    corresponding coefficients $\lambda_1, \ldots, \lambda_N \in \C$
    such that
\begin{equation}
    P(v')
    =
    \sum_{k=0}^{N}
    \lambda_k
    \cdot
    a_k(v')^n
    \qquad
    \textrm{for }
    v' \in V'.
\end{equation}
Note that this linear combination takes place within fixed homogeneity.
\begin{proof}[Of Lemma~\ref{lem:ImageOfIota}]
    Let $n \in \field{N}$ and $v_1, \ldots, v_n \in V$. Note that the
    seminorm
    \begin{equation*}
        \seminorm{q}(v')
        \coloneqq
        \max_{j=1, \ldots, n}
        \abs[\big]{v'(v_j)}
    \end{equation*}
    is continuous on $V'_\sigma$. Thus the estimate
    \begin{equation*}
        \abs[\big]
        {\iota(v_1 \vee \cdots \vee v_n)v'}
        =
        \prod_{j=1}^n
        \abs[\big]
        {v'(v_j)}
        \le
        \prod_{j=1}^n
        \seminorm{q}(v')
        =
        \seminorm{q}(v')^n
        \qquad
        \textrm{for }
        v' \in V'
    \end{equation*}
    proves the $\sigma$-continuity of $\iota(v_1 \vee \cdots \vee v_n)$.
    As the strong topology is finer than the weak topology, the
    polynomial $\iota(v_1 \vee \cdots \vee v_n)$ is thus
    also continuous on $V_\beta'$. By the polarization identity
    \eqref{eq:Polarization},
    we may finally write
    \begin{equation*}
        v'(v_1) \cdots v'(v_n)
        =
        \frac{1}{2^n \cdot n!}
        \sum_{\epsilon_j = \pm 1}
        \epsilon_1 \cdots \epsilon_n
        \cdot
        v'
        \biggl(
        \sum_{k=1}^{n}
        \epsilon_k v_k
        \biggr)^n
    \end{equation*}
    for $v' \in V'$, as the left-hand side is linear and symmetric
    with respect to $v_1 \tensor \cdots \tensor v_n$. Consequently,
    the index set $J \coloneqq \{-1,1\}^n$, viewed as maps
    $\{1,\ldots,n\} \longrightarrow \{-1,1\}$,
    \begin{equation}
        a_j
        \coloneqq
        \sum_{k=1}^{n}
        j(k)
        v_k
        \qquad \textrm{and} \qquad
        \lambda_j
        \coloneqq
        \frac{j(1) \cdots j(n)}{2^n \cdot n!}
    \end{equation}
    for all $j \in J$ constitutes the desired finite-type decomposition.
\end{proof}

Hence, our observation assembles to a linear mapping
\begin{equation}
    \label{eq:Iota}
    \iota
    \colon
    \Sym^\bullet(V)
    \longrightarrow
    \Pol(V'_\beta), \quad
    \iota(v_1 \vee \cdots \vee v_n)
    \at[\Big]{v'}
    \coloneqq
    \prod_{j=1}^n
    v'(v_j)
\end{equation}
with components
\begin{equation}
    \iota_n
    \coloneqq
    \iota
    \at[\Big]{\Sym^n(V)}
    \colon
    \Sym^n(V)
    \longrightarrow
    \Pol^n(V')
    \qquad \textrm{for }
    n \in \N.
\end{equation}
In zeroth degree, we set
\begin{equation}
    \iota_0
    \colon
    \Sym^0(V)
    =
    \C
    \longrightarrow
    \Pol^{0}(V'),
    \quad
    \iota_0(z)v'
    \coloneqq
    z.
\end{equation}
In passing, we note that with these conventions, $\iota$ is actually a
morphisms of algebras, translating the symmetric tensor product into the
pointwise product of mappings. While this point of view is sometimes
useful, we will use it sparingly in the sequel. Instead, we proceed with
an algebraic preliminary consideration and prove the injectivity of
$\iota$.
\begin{lemma}
    \label{lem:IotaInjectivity}%
    Let $V$ be a Hausdorff locally convex space. Then the
    mapping~$\iota$ from \eqref{eq:Iota} and its components $\iota_n$
    are injective for $n \in \N_0$.
\end{lemma}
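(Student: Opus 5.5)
We prove injectivity degree by degree and then assemble. Since $\iota$ maps $\Sym^n(V)$ into the $n$-homogeneous polynomials $\Pol^n(V')$, and a polynomial vanishes identically only if each of its homogeneous components does, injectivity of $\iota$ follows from injectivity of every $\iota_n$. Concretely, if $\iota(x)=0$ with $x=\sum_n x_n$, then by uniqueness of the homogeneous decomposition each $\iota_n(x_n)=0$. To justify this uniqueness, we either use the circle-integral formula from the proof of Lemma~\ref{lem:Homogeneity}, or we note that $z\mapsto \iota(x)(z v')=\sum_n z^n \iota_n(x_n)v'$ is a polynomial in $z$ that vanishes identically. The case $n=0$ is trivial, since $\iota_0(z)$ is the constant function $z$.

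For fixed $n\in\N$, we reduce to finite dimensions. Let $0\neq x\in\Sym^n(V)$. Then $x$ lies in $\Sym^n(F)$ for a finite-dimensional subspace $F=\operatorname{span}\{e_1,\ldots,e_d\}\subseteq V$, where the $e_k$ are linearly independent. Since $V$ is Hausdorff, Hahn--Banach gives continuous functionals $e_1',\ldots,e_d'\in V'$ with $e_j'(e_k)=\delta_{jk}$. We first extend each $e_k'$ from $F$, where every linear functional is continuous, to all of $V$. Next, in the basis of symmetrized monomials $e_{k_1}\vee\cdots\vee e_{k_n}$ with $k_1\le\cdots\le k_n$, we write
\[
x=\sum_{k_1\le\cdots\le k_n} c_{k_1\cdots k_n}\, e_{k_1}\vee\cdots\vee e_{k_n},
\]
with not all coefficients zero. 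We then restrict $\iota_n(x)$ to the subspace $\{\sum_j z_j e_j'\colon z\in\C^d\}\subseteq V'$, which yields the polynomial
\[
z\mapsto \sum c_{k_1\cdots k_n}\, z_{k_1}\cdots z_{k_n}\quad\text{on }\C^d.
\]
This is a nonzero polynomial function on $\C^d$, because distinct monomials are linearly independent as functions on an infinite field. Therefore it does not vanish at some $z$, and consequently $\iota_n(x)\neq 0$.

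The main point requiring care is linking the algebraic basis of $\Sym^n(F)$ to monomials, that is, verifying that the symmetrized monomials form a basis of $\Sym^n(F)\subseteq\Sym^n(V)$. Here we use that $\Sym^n(F)\to\Sym^n(V)$ is injective, since $F$ is a direct summand of $V$ algebraically. We also use that $\iota_n$ maps these basis elements to the corresponding coordinate monomials, by the defining formula \eqref{eq:Iota} combined with $e_j'(e_k)=\delta_{jk}$. Hahn--Banach is the only place where the Hausdorff assumption enters: it ensures that $V'$ separates enough points to realize the full dual of $F$.
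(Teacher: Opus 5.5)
Your proof is correct, but its key step differs from the paper's, and the difference matters.

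The paper expands $v$ in an algebraic basis of all of $V$ and fixes one multi-index $(\alpha_1,\ldots,\alpha_n)$ with nonzero coefficient. It then tries to isolate that coefficient with a single Hahn--Banach functional. This functional is $1$ on the $e_{\alpha_k}$ and $0$ on finitely many other basis vectors. The argument rests on the claim that every other multi-index with nonzero coefficient that is not a permutation of $\alpha$ contains an index outside $\{\alpha_1,\ldots,\alpha_n\}$.

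You instead pass to a finite-dimensional $F$ and pull back the whole $d$-parameter family $\sum_j z_j e_j'$ of functionals. You then use that a nonzero polynomial on $\C^d$ is a nonzero function. This is not just a stylistic alternative.

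The paper's claim fails for $n \ge 2$: for instance $(1,1)$ is not a permutation of $(1,2)$, yet it uses no new index. Worse, no functional taking only the values $0$ and $1$ on $e_1, e_2$ can detect the nonzero tensor
\[
x = e_1\vee e_1\vee e_2 - e_1\vee e_2\vee e_2 \in \Sym^3(V).
\]
Indeed, $\iota_3(x)(w') = ab(a-b)$ with $a = w'(e_1)$ and $b = w'(e_2)$, which vanishes for all $a,b\in\{0,1\}$. The paper's construction, with $w'(e_1)=w'(e_2)=1$, therefore returns $0$ here.

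What the paper's route would buy is an explicit $\{0,1\}$-valued witness. Such witnesses do not exist in general, so your use of arbitrary complex values via the identity principle in finitely many variables is essential, not cosmetic.

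A minor point: you do not need injectivity of $\Sym^n(F)\to\Sym^n(V)$. Any preimage of $x\neq 0$ is automatically nonzero, so its expansion in the symmetrized monomials has a nonzero coefficient.
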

\begin{proof}
    As polynomials of different homogeneous degrees are linearly
    independent, it suffices to prove the injectivity of the component
    maps $\iota_n$. For $n = 0$ there is nothing to be shown. Thus
    assume $n \in \N$ and let
    \begin{equation*}
        0
        \neq
        v \in \Sym^n(V).
    \end{equation*}
    We choose a linear algebraic basis $\{\basis{e}_\alpha \colon \alpha
    \in J \} \subseteq V$ of $V$ to expand
    \begin{equation*}
        v
        =
        \sum_{\alpha_1,\ldots,\alpha_n \in J}
        v^{\alpha_1 \cdots \alpha_n}
        \cdot
        \basis{e}_{\alpha_1} \vee \cdots \vee \basis{e}_{\alpha_n}.
    \end{equation*}
    As~$v$ is nonzero, there are indices $\alpha_1, \ldots, \alpha_n
    \in J$ such that the corresponding coefficient~$v^{\alpha_1 \cdots
    \alpha_n}$ is nonzero, as well. Moreover, if $\beta_1, \ldots,
    \beta_n \in J$ is a different choice of indices, in the sense that
    $(\beta_1,\ldots, \beta_n)$ is not a permutation of $(\alpha_1,
    \ldots, \alpha_n)$, such that the corresponding
    coefficient $v^{\beta_1 \cdots \beta_n} \neq 0$, then
    \begin{equation*}
        \beta_j \notin \{\alpha_1, \ldots,
        \alpha_n\}
        \qquad
        \textrm{for some}
        \quad
        j \in \{1, \ldots, n\}.
    \end{equation*}
    Choosing one such index for every other non-vanishing coefficient
    defines a finite index set $J_0 \subseteq J$, as we are working with
    a linear algebraic basis. Consider the linear functional
    \begin{align*}
        &v'
        \colon
        \Span
        \bigl(
        \bigl\{
            \basis{e}_{\alpha_1}, \ldots, \basis{e}_{\alpha_n}
        \bigr\}
        \cup
        \{\basis{e}_\beta \colon \beta \in J_0\}
        \bigr)
        \longrightarrow
        \field{C}, \\
        &v'
        \biggl(
        \sum_{k=1}^n
        \eta^k
        \basis{e}_{\alpha_k}
        +
        \sum_{\beta \in J_0}
        \lambda^\beta
        \basis{e}_{\beta}
        \biggr)
        \coloneqq
        \sum_{k=1}^{n}
        \eta^k.
    \end{align*}
    As the various $\basis{e}_\beta$ are linearly independent, our
    functional $v'$ is well defined. Moreover, as its domain is finite
    dimensional and Hausdorff, $v'$ is continuous. By the Hahn-Banach
    Theorem, we find a continuous extension $w' \in V'$ of $v'$.
    By construction,
    \begin{equation*}
        \iota_n(v)
        \at[\Big]{w'}
        =
        \sum_{\gamma_1,\ldots,\gamma_n \in J}
        v^{\gamma_1 \cdots \gamma_n}
        \cdot
        w'(\basis{e}_{\gamma_1})
        \cdots
        w'(\basis{e}_{\gamma_n})
        =
        n!
        \cdot
        v^{\alpha_1 \cdots \alpha_n}
        \neq
        0,
    \end{equation*}
    as every other term contains a factor $0$. This proves that the
    linear mapping $\iota_n$ is injective and thus the same is true for
    $\iota$ itself, as we are dealing with direct sums throughout.
\end{proof}

Next, we study the continuity of $\iota$ and its components, first for
injective tensor products.
\begin{theorem}[Tensors as Polynomials I]
    \label{thm:TensorsAsPolynomials}%
    Let $V$ be a Hausdorff locally convex space.
    \begin{theoremlist}
        \item \label{item:IotaNSigmaContinuous}%
        The restrictions
        \begin{equation}
            \label{eq:IotaNSigmaContinuous}
            \iota_n
            \colon
            \Sym^n_\epsilon(V)
            \longrightarrow
            \Pol_\sigma^n
            (V'_\beta)
        \end{equation}
        are continuous for $n \in \N_0$.

        \item \label{item:IotaNBetaContinuous}%
        If $V$ is barrelled, then the restrictions
        \begin{equation}
            \iota_n
            \colon
            \Sym^n_\epsilon(V)
            \longrightarrow
            \Pol_\beta^n(V'_\beta)
        \end{equation}
        are continuous for $n \in \N_0$.

        \item \label{item:IotaContinuous}%
        If $V$ is barrelled and $R \ge 0$, then the mapping
        \begin{equation}
            \label{eq:IotaContinuous}
            \iota
            \colon
            \Sym_{R,\epsilon}^\bullet(V)
            \longrightarrow
            \Pol^\bullet_\beta
            (V'_\beta)
        \end{equation}
        is continuous.
    \end{theoremlist}
\end{theorem}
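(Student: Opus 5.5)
The three claims all come down to estimating $\abs{\iota_n(x)v'}$ by an injective tensor seminorm of $x \in \Sym^n(V)$. The key identity follows from Lemma~\ref{lem:InjectiveTripleProduct}: for $\seminorm{q} \in \cs(V)$ and $x \in \Tensor^n(V)$,
\begin{equation*}
    \seminorm{q}^{\tensor_\epsilon n}(x)
    =
    \sup_{\abs{v_1'},\ldots,\abs{v_n'} \le \seminorm{q}}
    \abs[\big]{(v_1' \tensor \cdots \tensor v_n')(x)}.
\end{equation*}
Specialising to $v_1' = \cdots = v_n' = v'$ gives $\iota_n(x)v' = (v' \tensor \cdots \tensor v')(x)$ for symmetric $x$. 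Hence, for every $v'$ with $\abs{v'} \le \seminorm{q}$,
\begin{equation*}
    \abs[\big]{\iota_n(x)v'} \le \seminorm{q}^{\tensor_\epsilon n}(x).
\end{equation*}
By homogeneity, for arbitrary $v' \in V'$ we have $\abs{v'} \le \seminorm{q}$ with $\seminorm{q}(v) \coloneqq \max\{\abs{v'(v)},\ldots\}$. Concretely, take $\seminorm{q}_{v'} \coloneqq \abs{v'(\argument)}$, which is a continuous seminorm on $V$. This yields $\abs{\iota_n(x)v'} \le \seminorm{q}_{v'}^{\tensor_\epsilon n}(x)$, so every point evaluation $x \mapsto \iota_n(x)v'$ is continuous on $\Sym^n_\epsilon(V)$. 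That is exactly part \ref{item:IotaNSigmaContinuous}. The case $n=0$ is trivial throughout.

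For part \ref{item:IotaNBetaContinuous}, fix a bounded $B \subseteq V'_\beta$. The goal is a single $\seminorm{q} \in \cs(V)$ with $\abs{v'} \le \seminorm{q}$ for all $v' \in B$. This is where barrelledness enters, and it is the main step. A $\beta$-bounded set is in particular $\sigma(V',V)$-bounded, i.e.\ pointwise bounded. By the Banach--Steinhaus theorem for barrelled spaces, $B$ is equicontinuous. Then $\seminorm{q}(v) \coloneqq \sup_{v' \in B} \abs{v'(v)}$ is a continuous seminorm on $V$ with $B \subseteq \Ball_{\seminorm{q},1}(0)^\polar$, and the key estimate gives
\begin{equation*}
    \seminorm{p}_B\bigl(\iota_n(x)\bigr) \le \seminorm{q}^{\tensor_\epsilon n}(x).
\end{equation*}
This is the continuity of $\iota_n$ into $\Pol^n_\beta(V'_\beta)$. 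I also need to confirm that $\iota_n(x)$ is actually continuous on $V'_\beta$, so that it lands in $\Pol^n_\beta$; this is supplied by Lemma~\ref{lem:ImageOfIota}.

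For part \ref{item:IotaContinuous}, write $x = \sum_n x_n$ and use the triangle inequality on homogeneous components. With $B$ and $\seminorm{q}$ as above,
\begin{equation*}
    \seminorm{p}_B\bigl(\iota(x)\bigr)
    \le
    \sum_{n=0}^\infty \seminorm{q}^{\tensor_\epsilon n}(x_n)
    \le
    \sum_{n=0}^\infty n!^R\, \seminorm{q}^{\tensor_\epsilon n}(x_n)
    =
    \seminorm{q}_1^{(R,\epsilon)}(x),
\end{equation*}
since $n!^R \ge 1$ for $R \ge 0$. This is a continuity estimate for \eqref{eq:IotaContinuous}, where $\Pol^\bullet_\beta(V'_\beta)$ carries the seminorms $\seminorm{p}_B$ from \eqref{eq:SeminormBounded}. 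The only genuinely nontrivial input is the equicontinuity of strongly bounded sets, which is guaranteed by barrelledness.
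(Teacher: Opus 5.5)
Your proposal is correct and follows essentially the same route as the paper: the pointwise estimate $\abs{\iota_n(x)v'} \le \seminorm{q}_{v'}^{\tensor_\epsilon n}(x)$ obtained from the iterated injective seminorm formula, then Banach--Steinhaus on the barrelled space $V$ to make $\sup_{v' \in B}\abs{v'(\argument)}$ a continuous seminorm, and finally summation over homogeneous components using $n!^R \ge 1$. The garbled sentence beginning ``By homogeneity'' can simply be deleted, since the concrete choice $\seminorm{q}_{v'} = \abs{v'(\argument)}$ that follows it is all you need.
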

\begin{proof}
    The case $n=0$ is clear. Let $n \in \N$, $v \in \Sym^n(V)$ and
    $\varphi \in V'$ with corresponding seminorm
    $\seminorm{p}_\varphi(Q) \coloneqq \abs{Q(\varphi)}$ for $Q \in
    \Pol^\bullet_\sigma(V'_\beta)$. By continuity of $\varphi$, we
    moreover have
    \begin{equation}
        \seminorm{q}_\varphi
        \coloneqq
        \abs{\varphi} \in
        \cs(V).
    \end{equation}
    Using the higher order generalization of
    \eqref{eq:InjectiveTripleProduct} yields the estimate
    \begin{equation*}
        \label{eq:IotaProof}
        \seminorm{p}_\varphi
        \bigl(
        \iota_n(v)
        \bigr)
        =
        \abs[\big]
        {
            \varphi^{\tensor n}(v)
        }
        \le
        \sup_{\abs{v'_1}, \ldots, \abs{v_n'} \le \abs{\varphi}}
        \abs[\Big]
        {
            \bigl(
                v_1' \tensor \cdots \tensor v_n'
            \bigr)(v)
        }
        =
        \seminorm{q}_\varphi^{\tensor_\epsilon n}(v),
        \tag{$\ast$}
    \end{equation*}
    proving the continuity of \eqref{eq:IotaNSigmaContinuous}. Assume
    now that $V$ is barrelled and let $B' \subseteq V'_\beta$ be
    a bounded subset. Consider
    \begin{equation*}
        \seminorm{q}
        \coloneqq
        \sup_{\varphi \in B'}
        \seminorm{q}_\varphi,
    \end{equation*}
    which is a well defined, continuous seminorm on $V$ by the
    Banach-Steinhaus Theorem: as a pointwise supremum of continuous
    mappings it is lower semicontinuous and the Banach-Steinhaus Theorem
    yields its continuity. By what we have already argued, we
    get~\eqref{eq:IotaProof} for every $\varphi \in B'$, and thus arrive
    at the continuity estimate
    \begin{equation*}
        \seminorm{p}_{B'}
        \bigl(
            \iota_n(v)
        \bigr)
        =
        \sup_{\varphi \in B'}
        \seminorm{p}_\varphi
        \bigl(
            \iota_n(v)
        \bigr)
        \le
        \sup_{\varphi \in B'}
        \seminorm{q}_\varphi^{\tensor_\epsilon n}(v)
        \le
        \seminorm{q}^{\tensor_\epsilon n}(v),
    \end{equation*}
    as $\abs{v'} \le \abs{\varphi}$ for some $\varphi \in B'$ implies
    $\abs{v'} \le \seminorm{q}$ for $v' \in V'$. Finally turning to
    \eqref{eq:IotaContinuous}, we have
    \begin{equation*}
        \seminorm{p}_{B'}
        \bigl(
        \iota(v)
        \bigr)
        \le
        \sum_{n=0}^\infty
        \seminorm{p}_{B'}
        \bigl(
        \iota_n(v_n)
        \bigr)
        \le
        \sum_{n=0}^\infty
        \seminorm{q}^{\tensor_\epsilon n}
        \bigl(v_n)
        =
        \seminorm{q}_{1}^{(0,\epsilon)}
        \bigl(\iota(v)\bigr)
        \le
        \seminorm{q}_{1}^{(R,\epsilon)}
        \bigl(\iota(v)\bigr)
    \end{equation*}
    for $v = \sum_{n=0}^{\infty} v_n \in \Sym_R^\bullet(V)$ with
    homogeneous components $v_n \in \Sym^n(V)$ for $n \in \N_0$.
\end{proof}

By continuity of \eqref{eq:ProjectiveVsInjectiveIdentity}, our
considerations also prove a projective version of
Theorem~\ref{thm:TensorsAsPolynomials}.
\begin{corollary}[Tensors as Polynomials II]
    \label{cor:TensorsAsPolynomials}%
    Let $V$ be a Hausdorff locally convex space.
    \begin{theoremlist}
        \item \label{item:IotaNSigmaContinuousPi}%
        The restrictions
        \begin{equation}
            \label{eq:IotaNSigmaContinuousPi}
            \iota_n
            \colon
            \Sym^n_\pi(V)
            \longrightarrow
            \Pol_\sigma^n
            (V'_\beta)
        \end{equation}
        are continuous for $n \in \N_0$.

        \item \label{item:IotaNBetaContinuousPi}%
        If $V$ is barrelled, then the restrictions
        \begin{equation}
            \iota_n
            \colon
            \Sym^n_\pi(V)
            \longrightarrow
            \Pol_\beta^n(V'_\beta)
        \end{equation}
        are continuous for $n \in \N_0$.

        \item \label{item:IotaContinuousPi}%
        If $V$ is barrelled and $R \ge 0$, then the mapping
        \begin{equation}
            \iota
            \colon
            \Sym_{R,\pi}^\bullet(V)
            \longrightarrow
            \Pol^\bullet_\beta
            (V'_\beta)
        \end{equation}
        is continuous.
    \end{theoremlist}
\end{corollary}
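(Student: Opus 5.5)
The plan is to deduce each item of Corollary~\ref{cor:TensorsAsPolynomials} from the matching item of Theorem~\ref{thm:TensorsAsPolynomials}. We factor $\iota_n$, respectively $\iota$, through the identity map from the projective to the injective tensor product, whose continuity is Lemma~\ref{lem:ProjectiveVsInjective}. Concretely, $\iota_n$ on $\Sym^n_\pi(V)$ is the composite of the identity $\Sym^n_\pi(V) \to \Sym^n_\epsilon(V)$ with $\iota_n$ on $\Sym^n_\epsilon(V)$. A composite of continuous linear maps is continuous, so items \ref{item:IotaNSigmaContinuousPi} and \ref{item:IotaNBetaContinuousPi} follow at once from \ref{item:IotaNSigmaContinuous} and \ref{item:IotaNBetaContinuous}, with barrelledness assumed in the second case.

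The only point to check is that Lemma~\ref{lem:ProjectiveVsInjective} passes to $n$-fold tensor powers and to the symmetric subspaces. The subspace claim is immediate, since both symmetric spaces carry subspace topologies. For the powers, we want the seminorm estimate
\begin{equation*}
    \seminorm{q}^{\tensor_\epsilon n}
    \le
    \seminorm{q}^{\tensor_\pi n}
    \qquad
    \textrm{for }
    \seminorm{q} \in \cs(V).
\end{equation*}
This follows by induction on $n$ from \eqref{eq:ProjectiveVsInjective}, using associativity of both tensor products. In the inductive step we combine two facts. First, $\seminorm{a} \le \seminorm{b}$ implies $\seminorm{a} \tensor_\epsilon \seminorm{q} \le \seminorm{b} \tensor_\epsilon \seminorm{q}$, because the polar of the smaller seminorm is contained in the polar of the larger. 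Second, \eqref{eq:ProjectiveVsInjective} gives $\seminorm{b} \tensor_\epsilon \seminorm{q} \le \seminorm{b} \tensor_\pi \seminorm{q}$. Alternatively, one can repeat the one-line proof of Lemma~\ref{lem:ProjectiveVsInjective} directly with $n$ functionals, bounding $\abs{(v_1'\tensor\cdots\tensor v_n')(x)}$ by $\sum_k \prod_j \seminorm{q}(v_{k,j})$ and taking the infimum over decompositions of $x$.

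For item \ref{item:IotaContinuousPi}, the estimate above applied degreewise gives $\seminorm{q}_r^{(R,\epsilon)} \le \seminorm{q}_r^{(R,\pi)}$ term by term in \eqref{eq:SeminormRTopology}. Hence the identity $\Sym^\bullet_{R,\pi}(V) \to \Sym^\bullet_{R,\epsilon}(V)$ is continuous, and composing it with \eqref{eq:IotaContinuous} finishes the argument. Explicitly, for a bounded set $B' \subseteq V'_\beta$ we obtain
\begin{equation*}
    \seminorm{p}_{B'}\bigl(\iota(v)\bigr)
    \le
    \seminorm{q}_1^{(R,\epsilon)}(v)
    \le
    \seminorm{q}_1^{(R,\pi)}(v),
\end{equation*}
with $\seminorm{q}$ the continuous seminorm supplied by the Banach--Steinhaus Theorem in the proof of Theorem~\ref{thm:TensorsAsPolynomials}.

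There is no real obstacle. The only step needing any care is the seminorm comparison on higher tensor powers, and that is routine.
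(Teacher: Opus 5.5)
Your proposal is correct and follows the paper's own argument: the paper also obtains the corollary by composing the maps of Theorem~\ref{thm:TensorsAsPolynomials} with the continuous identity from the projective to the injective topology from Lemma~\ref{lem:ProjectiveVsInjective}. Your inductive check of $\seminorm{q}^{\tensor_\epsilon n} \le \seminorm{q}^{\tensor_\pi n}$, and the termwise comparison $\seminorm{q}_r^{(R,\epsilon)} \le \seminorm{q}_r^{(R,\pi)}$ that follows from it, spell out details the paper leaves implicit.
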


Our next goal is to investigate the flavour of holomorphic functions the
unique continuous linear extensions
\begin{equation}
    \widehat{\iota}
    \colon
    \widehat{\Sym}_{0,\epsilon}(V)
    \longrightarrow
    \Bounded(V_\beta')
    \qquad \textrm{and} \qquad
    \widehat{\iota}
    \colon
    \widehat{\Sym}_{0,\pi}(V)
    \longrightarrow
    \Bounded(V_\beta')
\end{equation}
of \eqref{eq:Iota} produce. Here, we have used the completeness of
$\Bounded(V_\beta') \coloneqq \Bounded(V_\beta',\C)$ from
Proposition~\ref{prop:BoundedCompleteness}. By slight abuse of notation,
we denote both extensions by the same symbol. This should not cause too
much confusion. Observe now the following.
\begin{lemma}
    \label{lem:IotaExtension}%
    Let $V$ be a barrelled Hausdorff locally convex space and
    \begin{equation}
        v
        =
        \sum_{n=0}^\infty
        v_n
        \in
        \widehat{\Sym}_{0,\pi}^\bullet(V)
    \end{equation}
    with homogeneous components $v_n \in \widehat{\Sym}^n_\pi(V)$. Then
    \begin{equation}
        \label{eq:IotaExtension}
        \widehat{\iota}
        (v)
        =
        \sum_{n=0}^\infty
        \widehat\iota(v_n)
        =
        \sum_{n=0}^\infty
        \widehat\iota_n(v_n),
    \end{equation}
    where $\widehat\iota_n \colon \widehat{\Sym}^n_\pi(V)
    \longrightarrow \Bounded(V'_\beta)$ denotes the unique linear
    continuous extension of $\iota_n$.
\end{lemma}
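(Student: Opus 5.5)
The plan is to read the completed projective $0$-topology as a graded sequence space, as in the analogue of \ref{item:InjectiveCompletion} for $\pi$ from \cite[Sec.~3.1]{waldmann:2014a}, and then to transport convergence through the continuous map $\widehat{\iota}$. First I would record that $v = \sum_n v_n$ with $v_n \in \widehat{\Sym}^n_\pi(V)$ means $\seminorm{q}^{(0,\pi)}_1(v) = \sum_n \seminorm{q}^{\tensor_\pi n}(v_n) < \infty$ for every $\seminorm{q} \in \cs(V)$, where the seminorms are extended to the completions. For the tail this gives
\begin{equation*}
    \seminorm{q}^{(0,\pi)}_1
    \biggl(
        v - \sum_{n=0}^{N} v_n
    \biggr)
    =
    \sum_{n=N+1}^{\infty}
    \seminorm{q}^{\tensor_\pi n}(v_n)
    \longrightarrow
    0.
\end{equation*}
The conclusion is that the partial sums $\sum_{n \le N} v_n$ converge to $v$ in $\widehat{\Sym}_{0,\pi}^\bullet(V)$. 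The one point needing care is that each $v_n$ lies in the completed homogeneous piece and not in $\Sym^n(V)$ itself. Here I would use that the inclusion $\Sym^n_\pi(V) \hookrightarrow \Sym^\bullet_{0,\pi}(V)$ is continuous (indeed isometric for the relevant seminorms), so it extends to $\widehat{\Sym}^n_\pi(V) \hookrightarrow \widehat{\Sym}_{0,\pi}^\bullet(V)$. This extension identifies $v_n$ with the $n$-th component of $v$ in the product picture.

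Next I would apply $\widehat{\iota}$, which is continuous and linear by Corollary~\ref{cor:TensorsAsPolynomials}, \ref{item:IotaContinuousPi}, extended by completeness of $\Bounded(V'_\beta)$ (Proposition~\ref{prop:BoundedCompleteness}). This yields
\begin{equation*}
    \widehat{\iota}(v)
    =
    \lim_{N \to \infty}
    \sum_{n=0}^{N}
    \widehat{\iota}(v_n),
\end{equation*}
which is the first equality. For the second equality, $\widehat{\iota}(v_n) = \widehat{\iota}_n(v_n)$, I note that $\widehat{\iota}\circ j_n$ and $\widehat{\iota}_n$ are both continuous linear maps $\widehat{\Sym}^n_\pi(V) \to \Bounded(V'_\beta)$, where $j_n$ is the extended inclusion. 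They agree with $\iota_n$ on the dense subspace $\Sym^n_\pi(V)$. Since $\Bounded(V'_\beta)$ is Hausdorff, they coincide. The continuity of $\iota_n$ into $\Bounded(V'_\beta)$, needed for $\widehat{\iota}_n$ to exist, is Corollary~\ref{cor:TensorsAsPolynomials}, \ref{item:IotaNBetaContinuousPi}, which uses barrelledness.

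The main obstacle is the identification of the completion. Specifically, one must show that an element of $\widehat{\Sym}_{0,\pi}^\bullet(V)$ really is represented by its components $v_n \in \widehat{\Sym}^n_\pi(V)$ with the partial sums converging. I would proceed as in \ref{item:InjectiveCompletion}: the projections onto degree $n$ are continuous, so they extend and produce $v_n$. The tail estimate above then follows by approximating $v$ with a net from $\Sym^\bullet(V)$ and applying the triangle inequality degree-wise, using that the seminorm is a sum over degrees.
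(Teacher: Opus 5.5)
Your proposal is correct and follows the paper's proof. The paper's argument is also convergence of the partial sums $\sum_{n \le N} v_n$ to $v$ in the $\Sym_{0,\pi}$-topology plus continuity of $\widehat{\iota}$ for the first equality, and agreement of two continuous maps on the dense subspace $\Sym^n(V)$ for the second; the paper merely asserts the partial-sum convergence, whereas you justify it through the tail estimate and the description of the completion.
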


As the $\epsilon$-version of the topology is coarser, the statement also
holds in this setting. The same can be said about the weighted
topologies for any $R \ge 0$.
\begin{proof}[Of Lemma~\ref{lem:IotaExtension}]
    The crucial point is that the partial sums of the series
    $\sum_{n=0}^\infty v_n$ actually converge to $v$ in
    the~$\Sym_{0,\pi}$-topology. This shows the first equality in
    \eqref{eq:IotaExtension} by linearity and continuity. The second
    follows from
    \begin{equation}
        \widehat\iota_n
        =
        \widehat{\iota}
        \at[\Big]
        {\Sym^n(V)},
    \end{equation}
    as both maps are continuous and agree on the dense subspace
    $\Sym^n(V)$.
\end{proof}

In passing, we observe that already the homogeneous components~$v_n$
might not be linear combinations of factorizing tensors any more. Thus
there are two limiting procedures to be understood: a \emph{horizontal}
completion within each homogeneous degree and a \emph{vertical}
completion of the symmetric algebra, where the summability condition
comes into play.

\begin{remark}[DF-spaces]
    \label{rem:DF}%
    Multilinearity is preserved under pointwise limits, and thus the
    horizontal completion in each homogeneous degree consists of not
    necessarily continuous polynomials. The situation simplifies if we
    can ensure the completeness of $\Pol_\beta(V_\beta')$, as in this
    case the extended components $\widehat{\iota}_n$ map again into
    $\Pol_\beta(V_\beta')$, i.e. we actually stay in the realm of
    continuous polynomials. Taking another look at
    Proposition~\ref{prop:CompletenessHomogeneousPolynomials}, we may
    ensure this by assuming first countability of $V'_\beta$, which is
    however a rather atypical behaviour for a strong dual space. Indeed,
    this means that we can find a fundamental sequence of bounded
    subsets of $V$. Together with our assumption of barelledness, this
    implies that $V$ is a \emph{DF-space}. Here, the abbreviation DF
    alludes to ``predual of Fréchet space'' and indeed, strong duals of
    DF-spaces are always Fréchet by \cite[§29.2~(1)]{koethe:1969a}.
    Thus, Hausdorff DF-spaces constitute the natural setting
    for well-behaved horizontal completions. A systematic treatment of
    DF-spaces can be found e.g. in the textbook
    \cite[§29.3]{koethe:1969a}.
\end{remark}

Reflexivity then gives rise to a remarkable class of examples.
\begin{example}[Reflexive nuclear Fréchet spaces]
    \label{ex:NuclearFrechet}%
    Let $F$ be a nuclear reflexive Fréchet space such as:
    \begin{examplelist}
        \item The algebra of smooth functions $\Cinfty(\Omega)$ defined
        on some open subset~$\emptyset \neq \Omega \subseteq \R^n$ with
        values in $\C$, see \cite[Example~28.9~(1)]{meise.vogt:1992a}
        and \cite[Prop~36.10]{treves:2006a}.

        \item The Schwartz space $\Schwartz(\R^n)$ of complex valued
        rapidly decreasing smooth functions on $\R^n$, see
        \cite[Prop~36.10 \& Thm.~51.5]{treves:2006a}.

        \item The algebra of holomorphic functions
        $\Holomorphic(\Omega)$ defined on
        some planar domain~$\Omega \subseteq \C^n$ with values
        in $\C$, see \cite[Example~28.9~(4)]{meise.vogt:1992a} and
        \cite[Corollary of Prop.~36.9 or Prop.~36.10]{treves:2006a},
        both of which are applicable by Montel's Theorem.
    \end{examplelist}

    Consider $V \coloneqq F'_\beta$. Then, by reflexivity, $V' =
    (F'_\beta)_\beta' \cong F$ is Baire as a Fréchet space. Moreover,
    $V$ is nuclear as the strong dual of a nuclear space by
    \cite[Prop.~50.6]{treves:2006a}. Finally,~$F$ is Montel as a nuclear
    Fréchet space by \cite[Prop.~50.2]{treves:2006a} and thus its strong
    dual $F'_\beta$ is also Montel by \cite[Prop.~36.10]{treves:2006a},
    which in particular implies that $F'_\beta$ is barrelled itself by
    \cite[Cor.~of Prop~36.9 \& Prop.~36.4]{treves:2006a}. Thus all
    assumptions of Theorem~\ref{thm:TensorsAsPolynomials} and the
    forthcoming Theorem~\ref{thm:IotaEmbedding1} and
    Theorem~\ref{thm:IotaEmbedding2} are fulfilled. Moreover, we may
    apply both Proposition~\ref{prop:CompletenessHomogeneousPolynomials}
    and Theorem~\ref{thm:CompletenessBoundedFrechet} to $V'_\beta = F$.
\end{example}

Returning to Lemma~\ref{lem:IotaExtension} in our new setting, our tools
readily prove Fréchet holomorphy.
\begin{corollary}
    \label{cor:TensorAsFrechet}%
    Let $V$ be a Hausdorff barrelled DF-space and $v \in
    \widehat{\Sym}_{0,\epsilon}(V)$. Then
    \begin{equation}
        \widehat\iota(v)
        \colon
        V_\beta'
        \longrightarrow
        \C
    \end{equation}
    is Fréchet holomorphic with Taylor polynomials at the origin given by
    $\widehat\iota_n(v_n)$ for $n \in \N_0$.
\end{corollary}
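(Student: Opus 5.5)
The plan is to realize $\widehat\iota(v)$ as the uniform-on-bounded-sets limit of the partial sums $\sum_{n=0}^N \widehat\iota_n(v_n)$ and to show that this limit lies in $\Holomorphic_\beta(V'_\beta)$. Three steps are needed: the summands are continuous polynomials, the limit is holomorphic, and the Taylor polynomials at the origin are the $\widehat\iota_n(v_n)$.

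For the first step, the injective analogue of Lemma~\ref{lem:IotaExtension}, which the remark after that lemma grants, gives $\widehat\iota(v) = \sum_{n} \widehat\iota_n(v_n)$ in $\Bounded(V'_\beta)$. Here $v_n$ ranges over the completion of $\Sym^n_\epsilon(V)$, and the series converges in the $\beta$-topology. Next I locate each summand. By Theorem~\ref{thm:TensorsAsPolynomials}, \ref{item:IotaNBetaContinuous}, the map $\iota_n$ is continuous into $\Pol^n_\beta(V'_\beta)$. Since $V$ is a Hausdorff DF-space, $V'_\beta$ is Fréchet and hence first countable, as noted in Remark~\ref{rem:DF}. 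Proposition~\ref{prop:CompletenessHomogeneousPolynomials} then makes $\Pol^n_\beta(V'_\beta)$ complete. Therefore $\iota_n$ extends continuously into this space, and this extension agrees with $\widehat\iota_n$ inside $\Bounded(V'_\beta)$ because both are continuous and coincide on the dense subspace $\Sym^n(V)$. So each $\widehat\iota_n(v_n)$ is a continuous $n$-homogeneous polynomial on $V'_\beta$.

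For the second step, each partial sum $f_N \coloneqq \sum_{n\le N}\widehat\iota_n(v_n)$ is a continuous polynomial. A continuous polynomial is Fréchet holomorphic, and it is bounded because continuous polynomials are bounded. Hence $f_N \in \Holomorphic_\beta(V'_\beta)$. Since $V'_\beta$ is Fréchet, it is first countable and Baire, so Theorem~\ref{thm:CompletenessBoundedFrechet} applies and $\Holomorphic_\beta(V'_\beta)$ is a complete, hence closed, subspace of $\Bounded(V'_\beta)$. The $f_N$ converge to $\widehat\iota(v)$ in $\Bounded(V'_\beta)$, so the limit lies in $\Holomorphic_\beta(V'_\beta)$. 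This is the Fréchet holomorphy claim.

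For the third step, I would use Corollary~\ref{cor:GateauxTaylorContinuity} with $U = V'_\beta$ and $v_0 = 0$. It says the projection $f \mapsto P_{n,0}[f]$ is continuous for locally uniform convergence on finite dimensional subspaces. That topology is coarser than the $\beta$-topology, because compact subsets of finite dimensional subspaces are bounded. For $N \ge n$, homogeneity and uniqueness of Taylor polynomials give $P_{n,0}[f_N] = \widehat\iota_n(v_n)$. Passing to the limit yields $P_{n,0}[\widehat\iota(v)] = \widehat\iota_n(v_n)$.

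The main obstacle I anticipate is the first step, namely identifying the horizontal completion. The convergence of the series itself in the completed $R$-topology is provided by item~\ref{item:InjectiveCompletion}. The delicate points are that $\widehat\iota_n$, a priori valued only in $\Bounded(V'_\beta)$, really lands in continuous polynomials, and that the completion $\widehat{\Sym}^n_\epsilon$ sits correctly inside $\widehat{\Tensor}^n_\epsilon$. Both are handled by the DF assumption via Proposition~\ref{prop:CompletenessHomogeneousPolynomials}.
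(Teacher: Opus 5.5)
Your proof is correct, but it reaches holomorphy through a different key result than the paper does.

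The paper argues as follows:
\begin{itemize}
\item It uses Remark~\ref{rem:DF} to see that the $\widehat\iota_n(v_n)$ are continuous.
\item It reads $\sum_n \widehat\iota_n(v_n)$ as the Taylor expansion of $\widehat\iota(v)$ at the origin, which gives G\^ateaux holomorphy.
\item It then applies Proposition~\ref{prop:FrechetBaire} on the Baire space $V'_\beta$ to obtain continuity.
\end{itemize}
So the Taylor polynomials are identified before holomorphy is established.

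You instead place the partial sums in $\Holomorphic_\beta(V'_\beta)$, use its closedness in $\Bounded(V'_\beta)$ from Theorem~\ref{thm:CompletenessBoundedFrechet}, and identify the Taylor polynomials only afterwards. That theorem is itself proved via Propositions~\ref{prop:GateauxTopology}, \ref{prop:CompletenessHomogeneousPolynomials} and~\ref{prop:FrechetBaire}, so the underlying mechanism is the same, just packaged differently. The paper's route is shorter. Yours makes explicit why the limit is G\^ateaux holomorphic at every base point. The paper's appeal to uniqueness of Taylor expansions leaves this implicit; it rests on the series converging uniformly on bounded sets, hence on compact discs in affine complex lines.

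Once Theorem~\ref{thm:CompletenessBoundedFrechet} is in hand, the step you flag as the main obstacle is no longer needed:
\begin{itemize}
\item Approximate $v$ by elements of $\Sym^\bullet(V)$. Continuity of $\widehat\iota$ and closedness already give $\widehat\iota(v) \in \Holomorphic_\beta(V'_\beta)$.
\item The Cauchy estimates make the Taylor projection at the origin continuous in the $\beta$-topologies. By density, it therefore agrees with $\widehat\iota_n$ composed with the $n$-th component projection.
\item Continuity of $\widehat\iota_n(v_n)$ then follows automatically, as a Taylor polynomial of a Fr\'echet holomorphic function.
\end{itemize}
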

\begin{proof}
    By Remark~\ref{rem:DF}, the series \eqref{eq:IotaExtension}
    is a decomposition of $\iota(v)$ into homogeneous
    polynomials and thus constitutes its Taylor expansion by the
    uniqueness of the latter, see again \cite[Sec.~3.1]{dineen:1999a}.
    Hence, $\widehat{\iota}(v)$ is G\^{a}teaux holomorphic. Invoking
    Proposition~\ref{prop:FrechetBaire}, the continuity of its Taylor
    polynomials finally implies continuity of $\widehat{\iota}(v)$.
\end{proof}

Our main result is now that $\iota$ constitutes a topological embedding
when working with injective tensor products. That is to say, the
subspace topology induced by the inclusion
\begin{equation}
    \widehat{\iota}
    \bigl(
        \Sym_{0,\epsilon}(V)
    \bigr)
    \subseteq
    \Holomorphic_\beta(V'_\beta)
\end{equation}
matches with the original one. In view of
Lemma~\ref{lem:IotaInjectivity} this identifies the
$\Sym_{0,\epsilon}$-topology as nothing else than the topology of
uniform convergence on bounded sets. We have already taken care of the
brunt of the proof by establishing
Proposition~\ref{prop:BetaAlternativeSeminorms}.
\begin{theorem}[Embedding I]
    \label{thm:IotaEmbedding1}
    Let $V$ be a Hausdorff barrelled DF-space. Then the mapping
    \begin{equation}
        \label{eq:IotaEmbedding2}
        \iota
        \colon
        \Sym_{0,\epsilon}^\bullet(V)
        \longrightarrow
        \Pol_\beta^\bullet(V'_\beta)
        \subseteq
        \Holomorphic_\beta(V'_\beta)
    \end{equation}
    is a grading preserving linear topological embedding.
\end{theorem}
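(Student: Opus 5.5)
Continuity of \eqref{eq:IotaEmbedding2} is Theorem~\ref{thm:TensorsAsPolynomials}, \ref{item:IotaContinuous} with $R = 0$, and injectivity is Lemma~\ref{lem:IotaInjectivity}. That $\iota$ preserves the grading is built into its definition, since $\iota_n$ maps $\Sym^n(V)$ into $\Pol^n$. The inclusion $\Pol^\bullet_\beta(V'_\beta) \subseteq \Holomorphic_\beta(V'_\beta)$ holds because continuous polynomials are Fréchet holomorphic and bounded. So the only real task is the reverse continuity estimate: for every $\seminorm{q} \in \cs(V)$ we must find a bounded set $B' \subseteq V'_\beta$ and a constant $C$ with
\begin{equation*}
    \seminorm{q}^{(0,\epsilon)}_1(v)
    \le
    C \cdot \seminorm{p}_{B'}\bigl(\iota(v)\bigr)
    \qquad
    \textrm{for } v \in \Sym^\bullet(V).
\end{equation*}

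As candidate I take $B' \coloneqq \Ball_{\seminorm{q},1}(0)^\polar = \{v' \in V' \colon \abs{v'} \le \seminorm{q}\}$. It is bounded in $V'_\beta$ by Lemma~\ref{lem:PolarsOfBallsBounded}, and it is clearly absolutely convex. Now fix a degree $n$ and $v_n \in \Sym^n(V)$. By the $n$-fold version of \eqref{eq:InjectiveTripleProduct},
\begin{equation*}
    \seminorm{q}^{\tensor_\epsilon n}(v_n)
    =
    \sup_{v_1',\ldots,v_n' \in B'}
    \abs[\big]{(v_1' \tensor \cdots \tensor v_n')(v_n)}.
\end{equation*}
The key observation, which I expect to be the main (though mild) obstacle, is the following identity for symmetric tensors:
\begin{equation*}
    (v_1' \tensor \cdots \tensor v_n')(v_n)
    =
    \widecheck{\iota_n(v_n)}(v_1',\ldots,v_n').
\end{equation*}
To prove it, note that the left-hand side is symmetric and $n$-linear in the $v_j'$ because $v_n$ is a symmetric tensor. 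On the diagonal it reduces to $\iota_n(v_n)(v') = v'^{\tensor n}(v_n)$. By the uniqueness part of Proposition~\ref{prop:Polarization}, the left-hand side is therefore exactly the polarization of $\iota_n(v_n)$. The polarization estimate, Corollary~\ref{cor:PolynomialContinuityVsMultilinear}, together with $n^n/n! \le \E^n$, then gives
\begin{equation*}
    \seminorm{q}^{\tensor_\epsilon n}(v_n)
    \le
    \E^n \cdot \seminorm{p}_{B'}\bigl(\iota_n(v_n)\bigr)
    =
    \seminorm{p}_{\E B'}\bigl(\iota_n(v_n)\bigr),
\end{equation*}
where the last equality uses homogeneity \eqref{eq:Homogeneity}.

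The factor $\E^n$ grows with the degree, so it cannot be absorbed into a single constant directly. Summing over degrees instead produces the alternative seminorm from Proposition~\ref{prop:BetaAlternativeSeminorms}. The Taylor polynomials of $\iota(v)$ at the origin are precisely $\iota_n(v_n)$, by uniqueness of homogeneous decompositions. Since $\E B' \in \Bdd(V'_\beta, 0)$, applying \ref{item:BoundedAlternativeSeminorms} with $r = 1/2$ yields
\begin{equation*}
    \seminorm{q}^{(0,\epsilon)}_1(v)
    \le
    \sum_{n=0}^\infty \seminorm{p}_{\E B'}\bigl(\iota_n(v_n)\bigr)
    =
    \seminorm{r}_{\E B', \abs{\argument}, 0}\bigl(\iota(v)\bigr)
    \le
    2 \cdot \seminorm{p}_{2\E B'}\bigl(\iota(v)\bigr).
\end{equation*}
The set $2\E B'$ is again bounded, so this is the required estimate. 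It shows that $\iota^{-1}$ is continuous on the image of $\iota$, which completes the proof that $\iota$ is an embedding.

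The DF and barrelledness assumptions are used only for continuity of $\iota$ (via the Banach–Steinhaus argument in Theorem~\ref{thm:TensorsAsPolynomials}) and to place the target inside the well-behaved space $\Holomorphic_\beta(V'_\beta)$. The reverse estimate needs only the Hausdorff property and Lemma~\ref{lem:PolarsOfBallsBounded}.
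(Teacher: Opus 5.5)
Your proposal is correct and follows the paper's proof essentially step for step. It uses the same polar $B' = \Ball_{\seminorm{q},1}(0)^\polar$, the polarization estimate with $n^n/n! \le \E^n$, and summation over degrees into the seminorm $\seminorm{r}_{\E B',\abs{\argument},0}$ of Proposition~\ref{prop:BetaAlternativeSeminorms}. Continuity and injectivity come from Theorem~\ref{thm:TensorsAsPolynomials} and Lemma~\ref{lem:IotaInjectivity}.

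Your final bound $\seminorm{r}_{\E B',\abs{\argument},0} \le 2\,\seminorm{p}_{2\E B'}$ just makes explicit the continuity of $\seminorm{r}$ that the paper invokes. Your scaling there is the right one: the locally convex Cauchy estimates with radius $2$ require the enlarged set $2\E B'$. A literal reading of the estimate in item~\ref{item:BoundedAlternativeSeminorms} with $r = 1/2$ would instead suggest $\tfrac{1}{2}\E B'$, which is wrong.
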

\begin{proof}
    Let $\seminorm{q} \in \cs(V)$ and consider the bounded polar $B'
    \coloneqq \Ball_{\seminorm{q},1}(0)^\polar$ from
    \eqref{eq:PolarOfCylinder}. Let moreover
    \begin{equation*}
        v
        =
        \sum_{n=0}^{\infty} v_n
        \in
        \Sym^\bullet_{0,\epsilon}(V)
        \qquad
        \textrm{with homogeneous components} \quad
        v_n \in \Sym_\epsilon^n(V)
    \end{equation*}
    and write $P_n \coloneqq \iota_n(v_n) \in \Pol^n(V'_\beta)$. Using
    the polarization estimate~\eqref{eq:PolarizationEstimate} for
    $B'$, which is indeed absolutely convex as a polar \eqref{eq:Polar}
    by the triangle inequality, leads to
    \begin{equation*}
        \seminorm{q}^{\tensor_\epsilon n}(v_n)
        =
        \sup_{v_1', \ldots, v_n' \in B'}
        \abs[\Big]
        {
            \bigl(
                v_1' \tensor \cdots \tensor v_n'
            \bigr)(v_n)
        }
        =
        \sup_{v_1', \ldots, v_n' \in B'}
        \abs[\Big]
        {
            \widecheck{P}_n
            \bigl(
                v_1', \ldots, v_n'
            \bigr)
        }
        \le
        \frac{n^n}{n!}
        \cdot
        \seminorm{p}_{B'}(P_n)
    \end{equation*}
    for $n \in \N$. Hence, in view of $\abs{v_0} =
    \seminorm{p}_{B'}(P_0)$ and $n^n/n! \le \E^n$, we have
    \begin{equation*}
        \seminorm{q}_{c}^{(0,\epsilon)}(v)
        =
        \sum_{n=0}^{\infty}
        c^n
        \cdot
        \seminorm{q}^{\tensor_\epsilon n}(v_n)
        \le
        \sum_{n=0}^{\infty}
        \frac{(c \cdot n)^n}{n!}
        \cdot
        \seminorm{p}_{B'}(P_n)
        \le
        \seminorm{r}_{c \cdot \E \cdot B',\abs{\argument},0}
        \bigl(
        \iota(v)
        \bigr)
    \end{equation*}
    with the continuous seminorm $\seminorm{r}_{c \cdot \E \cdot
    B',\abs{\argument},0} \in \cs(\Pol_\beta(V'_\beta))$ from
    \eqref{eq:BoundedAlternativeSeminorm} and any $c \ge 0$.
\end{proof}

\begin{remark}[Reflexivity]
    Let $V$ be a Hausdorff barrelled DF-space. We have found a
    conceptual description of the locally convex topology of
    $\Sym_{0,\epsilon}^\bullet(V)$: it is simply the topology of uniform
    convergence on bounded sets for the corresponding polynomials.
    Hence, our function-theoretic results from
    Section~\ref{sec:SpacesOfBoundedFunctions} apply. We have shown the
    equivalence of global data on bounded sets and Taylor data at a
    single point, encoded as the homogeneous components of the tensor,
    see again Corollary~\ref{cor:TensorAsFrechet}. The natural
    question then concerns the size of the closures
    \begin{equation}
        \widehat{\iota}_n
        \bigl(
            \widehat{\Sym}_{\epsilon}^n(V)
        \bigr)
        \subseteq
        \Pol^n_\beta(V'_\beta)
        \qquad \textrm{and} \qquad
        \widehat{\iota}
        \bigl(
            \widehat{\Sym}_{0,\epsilon}^\bullet(V)
        \bigr)
        \subseteq
        \Holomorphic_\beta(V'_\beta).
    \end{equation}
    For $n=1$ this recovers the notion of strong reflexivity and thus
    the surjectivity of $\widehat{\iota_n}$ for~$n\ge2$ may be seen as
    higher order reflexivity. We leave this question for posterity.
\end{remark}

However, for the purposes of strict deformation quantization,
Theorem~\ref{thm:IotaEmbedding1} is not quite satisfactory, as the
former's techniques rely on the universal property of the projective
tensor product. The final ingredient we need to guarantee that $\iota$
is an embedding also for the $\Sym_{0,\pi}$-topology turns out to be
nuclearity. Indeed, on tensor products with nuclear spaces, the
locally convex topologies generated by projective and injective tensor
products coincide, i.e. \eqref{eq:ProjectiveVsInjectiveIdentity}
constitutes a homeomorphism.
\begin{theorem}[Embedding II]
    \label{thm:IotaEmbedding2}%
    Let $V$ be a nuclear Hausdorff barrelled DF-space. Then the mapping
    \begin{equation}
        \label{eq:IotaEmbedding}
        \iota
        \colon
        \Sym_{0,\pi}^\bullet(V)
        \longrightarrow
        \Pol_\beta^\bullet(V'_\beta)
    \end{equation}
    is a grading preserving linear topological embedding.
\end{theorem}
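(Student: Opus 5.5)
The plan is to reduce the statement to Theorem~\ref{thm:IotaEmbedding1} by comparing the $\Sym_{0,\pi}$- and $\Sym_{0,\epsilon}$-topologies. Continuity of $\iota$ on $\Sym^\bullet_{0,\pi}(V)$ is already Corollary~\ref{cor:TensorsAsPolynomials}, \ref{item:IotaContinuousPi}. Injectivity and grading preservation are Lemma~\ref{lem:IotaInjectivity}. So the only remaining task is to show that the $\Sym_{0,\pi}$-topology is not finer than the topology pulled back from $\Pol^\bullet_\beta(V'_\beta)$. Since Theorem~\ref{thm:IotaEmbedding1} identifies the pulled-back topology with the $\Sym_{0,\epsilon}$-topology, it suffices to show that the identity $\Sym^\bullet_{0,\epsilon}(V)\to\Sym^\bullet_{0,\pi}(V)$ is continuous.

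For this I would use nuclearity in its seminorm form. For every $\seminorm{q}\in\cs(V)$ there is some $\seminorm{p}\in\cs(V)$ such that the canonical map between local Banach spaces $V_{\seminorm{p}}\to V_{\seminorm{q}}$ is nuclear (or even absolutely summing with constant $1$ after rescaling). From this, the standard argument, e.g.\ \cite[Ch.~50]{treves:2006a}, yields
\begin{equation*}
    \seminorm{q}\tensor_\pi \seminorm{r}
    \le
    C\cdot \seminorm{p}\tensor_\epsilon \seminorm{r}
\end{equation*}
for every seminorm $\seminorm{r}$ on the other factor. The essential issue is that the $R$-topology sums over all degrees $n$, so one needs control of the $n$-fold estimate with at most geometric growth in $n$. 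Iterating the two-factor estimate, using associativity of both tensor products (Corollary~\ref{lem:InjectiveTripleProduct} and its projective analogue), I aim for
\begin{equation*}
    \seminorm{q}^{\tensor_\pi n}
    \le
    C^n\cdot \seminorm{p}^{\tensor_\epsilon n}
    \qquad
    \textrm{on } \Tensor^n(V),
\end{equation*}
with $C$ independent of $n$.

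To obtain the constant, I would choose $\seminorm{p}$ so that $\seminorm{q}\le\seminorm{p}$ and the nuclear norm of $V_{\seminorm{p}}\to V_{\seminorm{q}}$ is at most $C$. Replacing one factor at a time then gives the inequality above. More precisely, the step from degree $n-1$ to degree $n$ uses
\begin{equation*}
    \seminorm{q}^{\tensor_\pi (n-1)}\tensor_\pi \seminorm{q}
    \le
    C\cdot \seminorm{q}^{\tensor_\pi(n-1)}\tensor_\epsilon \seminorm{p},
\end{equation*}
followed by an induction which requires the $\epsilon$-product to be monotone in its factors. I expect this bookkeeping, in particular keeping the constant exactly geometric and justifying monotonicity of $\tensor_\epsilon$ when mixing $\pi$ and $\epsilon$ factors, to be the main obstacle.

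Once the degreewise inequality holds, one restricts it to symmetric tensors and sums over the degrees:
\begin{equation*}
    \seminorm{q}^{(0,\pi)}_1(v)
    =
    \sum_n \seminorm{q}^{\tensor_\pi n}(v_n)
    \le
    \sum_n C^n\,\seminorm{p}^{\tensor_\epsilon n}(v_n)
    =
    \seminorm{p}^{(0,\epsilon)}_C(v).
\end{equation*}
This is exactly the continuity of the identity $\Sym^\bullet_{0,\epsilon}(V)\to\Sym^\bullet_{0,\pi}(V)$. The final estimate in the proof of Theorem~\ref{thm:IotaEmbedding1} then bounds $\seminorm{p}^{(0,\epsilon)}_C(v)$ by $\seminorm{r}_{C\E B',\abs{\argument},0}(\iota(v))$ with $B'=\Ball_{\seminorm{p},1}(0)^\polar$, which completes the embedding.
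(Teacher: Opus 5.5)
Your proposal is correct and is essentially the paper's own proof. Nuclearity gives $\seminorm{q}\tensor_\pi\seminorm{m}\le\nu\cdot(\seminorm{q}'\tensor_\epsilon\seminorm{m})$ with $\seminorm{q}'\ge\seminorm{q}$ and $\nu$ the nuclear norm of $V_{\seminorm{q}'}\to V_{\seminorm{q}}$ (the paper's display has $\seminorm{q}$ on the right-hand side, but your version with the larger seminorm is the one intended). This is iterated to $\seminorm{q}^{\tensor_\pi n}\le\nu^{n-1}\cdot(\seminorm{q}')^{\tensor_\epsilon n}$, and the final estimate of Theorem~\ref{thm:IotaEmbedding1} then finishes exactly as you describe.

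The step you single out as the main obstacle is routine. Monotonicity and positive homogeneity of $\tensor_\epsilon$ in each factor follow directly from the definition \eqref{eq:TensorInjective}, because the set $\{v'\colon\abs{v'}\le\seminorm{q}\}$ over which the supremum runs only grows when $\seminorm{q}$ grows. This applies verbatim when one factor is the seminorm $\seminorm{q}^{\tensor_\pi(n-1)}$ on $\Tensor^{n-1}(V)$.
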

\begin{proof}
    Let $\seminorm{q} \in \cs(V)$. By nuclearity of $V$, there exists
    another seminorm $\seminorm{q}' \in \cs(V)$ with $\seminorm{q} \le
    \seminorm{q}'$ such that
    \begin{equation*}
        \seminorm{q} \tensor_\pi \seminorm{m}
        \le
        \nu
        \cdot
        \bigl(
            \seminorm{q} \tensor_\epsilon \seminorm{m}
        \bigr)
        \qquad
        \textrm{for all }
        \seminorm{m} \in \cs(V),
    \end{equation*}
    where $\nu \ge 0$ is the nuclear norm of the canonical mapping
    $V_{\seminorm{q}'} \longrightarrow V_{\seminorm{q}}$
    between the local Banach spaces, see \cite[Thm.~6.38]{vogt:2000a}.
    For tensor powers, this implies
    \begin{equation*}
        \seminorm{q}^{\tensor_\pi n}
        \le
        \nu^{n-1}
        \cdot
        \seminorm{q}^{\tensor_\epsilon n}
        \qquad
        \textrm{for }
        n \in \N.
    \end{equation*}
    This scales polynomially, and thus we may proceed as in the proof of
    Theorem~\ref{thm:IotaEmbedding1} to derive the desired continuity
    estimate after passing to injective tensor powers.
\end{proof}

This explains why the nuclearity in Example~\ref{ex:NuclearFrechet} was
a rather desirable property. Our considerations also match nicely with
\cite[Thm.~4.10]{waldmann:2014a}, which asserts that $\Sym_{R,\pi}(V)$
is nuclear iff $V$ is, where $R \ge 0$ as usual.
\begin{corollary}
    Let $V$ be a nuclear Hausdorff locally convex space. Then
    \begin{equation}
        \Sym_{R,\pi}^\bullet(V)
        \cong
        \Sym_{R,\epsilon}^\bullet(V)
    \end{equation}
    as locally convex algebras.
\end{corollary}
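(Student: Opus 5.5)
The plan is to show that the identity map $\Sym_{R,\pi}^\bullet(V) \longrightarrow \Sym_{R,\epsilon}^\bullet(V)$ is a homeomorphism. Since it is the identity on the underlying symmetric algebra, it is automatically an algebra isomorphism, and nothing beyond topology needs checking. Continuity in one direction is immediate from Lemma~\ref{lem:ProjectiveVsInjective}. The estimate \eqref{eq:ProjectiveVsInjective} passes to $n$-fold tensor powers, for instance by induction using associativity, and gives
\begin{equation*}
    \seminorm{q}^{\tensor_\epsilon n}
    \le
    \seminorm{q}^{\tensor_\pi n}
    \qquad
    \textrm{for all } n \in \N_0.
\end{equation*}
Multiplying by $n!^R r^n$ and summing yields $\seminorm{q}_r^{(R,\epsilon)} \le \seminorm{q}_r^{(R,\pi)}$ on $\Tensor^\bullet(V)$. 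Restricting to the symmetric subalgebra then gives continuity of the identity from the projective to the injective topology.

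For the converse direction I would reuse the nuclearity estimate from the proof of Theorem~\ref{thm:IotaEmbedding2}. Given $\seminorm{q} \in \cs(V)$, nuclearity provides $\seminorm{q}' \ge \seminorm{q}$ and a constant $\nu \ge 0$ with
\begin{equation*}
    \seminorm{q} \tensor_\pi \seminorm{m}
    \le
    \nu \cdot \bigl(\seminorm{q} \tensor_\epsilon \seminorm{m}\bigr)
    \qquad
    \textrm{for all } \seminorm{m} \in \cs(V).
\end{equation*}
Iterating this, taking successive $\seminorm{m}$ to be injective tensor powers and using associativity from Corollary~\ref{lem:InjectiveTripleProduct}, gives
\begin{equation*}
    \seminorm{q}^{\tensor_\pi n}
    \le
    \nu^{n-1} \cdot \seminorm{q}'^{\tensor_\epsilon n}.
\end{equation*}
In the iteration the upgraded seminorm $\seminorm{q}'$ should appear in the injective factors. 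This is the point to be careful about: one either uses the monotonicity $\seminorm{q} \le \seminorm{q}'$ throughout, or applies the nuclear estimate to the map $V_{\seminorm{q}'} \to V_{\seminorm{q}}$ factor by factor. The induction step needs the $\pi$-seminorm on $\Tensor^{n}$ written as $\seminorm{q} \tensor_\pi \seminorm{q}^{\tensor_\pi (n-1)}$. We then bound the inner factor inductively by $\nu^{n-2}\seminorm{q}'^{\tensor_\epsilon(n-1)}$, using that the projective tensor product of seminorms is monotone in each factor. Finally we apply the nuclear estimate with $\seminorm{m} = \seminorm{q}'^{\tensor_\epsilon(n-1)}$ and bound $\seminorm{q} \tensor_\epsilon \seminorm{m}$ by $\seminorm{q}' \tensor_\epsilon \seminorm{m}$.

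Once the power estimate holds, the weights absorb the geometric growth:
\begin{equation*}
    \seminorm{q}_r^{(R,\pi)}(v)
    =
    \sum_{n=0}^\infty n!^R r^n \seminorm{q}^{\tensor_\pi n}(v_n)
    \le
    \sum_{n=0}^\infty n!^R (\nu r)^n \seminorm{q}'^{\tensor_\epsilon n}(v_n)
    =
    \seminorm{q'}^{(R,\epsilon)}_{\nu r}(v),
\end{equation*}
up to the harmless factor $\nu^{-1}$, which we handle by replacing $\nu$ with $\max(\nu,1)$. The right-hand side is a continuous seminorm of $\Sym_{R,\epsilon}^\bullet(V)$, since $\nu r \cdot \seminorm{q}'$ rescales into the $r=1$ family. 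This proves continuity of the inverse.

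The main obstacle is the induction producing the $n$-fold power estimate. One has to make sure the nuclear inequality may be applied with a tensor power seminorm as $\seminorm{m}$, which requires $\seminorm{m}$ to be a continuous seminorm on a possibly different space. I would therefore first restate the cited result from \cite[Thm.~6.38]{vogt:2000a} for an arbitrary second locally convex space $W$ in place of $V$, namely $W = \Tensor_\epsilon^{n-1}(V)$, and then run the induction. Continuity of the multiplication need not be reproved, since the topologies coincide and both algebras are already locally convex algebras.
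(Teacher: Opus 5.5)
Your proof is correct and follows the argument the paper leaves implicit after Theorem~\ref{thm:IotaEmbedding2}: Lemma~\ref{lem:ProjectiveVsInjective} gives $\seminorm{q}_r^{(R,\epsilon)} \le \seminorm{q}_r^{(R,\pi)}$, and the nuclearity estimate gives only geometric growth $\nu^{n-1}$, which the weights $r^n$ absorb. The one thing to repair is the inequality $\seminorm{q} \tensor_\pi \seminorm{m} \le \nu \cdot (\seminorm{q} \tensor_\epsilon \seminorm{m})$ that you took over from the proof of Theorem~\ref{thm:IotaEmbedding2}, which fails in general (for the $\ell^2$-norm $\seminorm{q} = \seminorm{m}$ on the space of rapidly decreasing sequences, $\sum_{k=1}^N e_k \tensor e_k$ has projective value $N$ and injective value $1$); nuclearity of $V_{\seminorm{q}'} \to V_{\seminorm{q}}$ actually yields $\seminorm{q} \tensor_\pi \seminorm{m} \le \nu \cdot (\seminorm{q}' \tensor_\epsilon \seminorm{m})$ for continuous seminorms $\seminorm{m}$ on an arbitrary locally convex space $W$, as you anticipated, and this gives your power estimate $\seminorm{q}^{\tensor_\pi n} \le \nu^{n-1} \cdot (\seminorm{q}')^{\tensor_\epsilon n}$ directly, so having $\seminorm{q}'$ rather than $\seminorm{q}$ on the injective side (unlike the paper's displayed $\seminorm{q}^{\tensor_\epsilon n}$) is essential, not merely a matter of care.
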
